\newtheorem{theorem}{Theorem}[section]
\newtheorem{lemma}[theorem]{Lemma}
\newtheorem{proposition}{Proposition}
\theoremstyle{definition}
\newtheorem{definition}[theorem]{Definition}
\newtheorem{remark}{Remark}
\title{Controllability under positivity constraints of multi-d wave equations}
\author[Dario Pighin and Enrique Zuazua]{}
\email{dario.pighin@uam.es}
\email{enrique.zuazua@uam.es}
\thanks{This research was supported by  the Advanced Grant DyCon (Dynamical Control) of the European Research Council Executive Agency  (ERC),  the MTM2014-52347 and MTM2017-92996 Grants of the MINECO (Spain) and  the ICON project of the French ANR-16-ACHN-0014.\\
	The authors thank Yubiao Zhang for his helpful revision of the paper.}
\begin{document}
	\maketitle
	
	\centerline{\scshape Dario Pighin$^*$}
	\medskip
	{\footnotesize
		\centerline{Departamento de Matem\'aticas, Universidad Aut\'onoma de Madrid}
		\centerline{28049 Madrid, Spain}
	} 

	\medskip
	
	\centerline{\scshape Enrique Zuazua}
	\medskip
	{\footnotesize
		\centerline{DeustoTech, Fundaci\'on Deusto}
		\centerline{Avda. Universidades,
			24, 48007, Bilbao, Basque Country, Spain}
	} 
	\medskip
	{\footnotesize
		\centerline{Departamento de Matem\'aticas, Universidad Aut\'onoma de Madrid}
		\centerline{28049 Madrid, Spain}
	} 
	\medskip
	{\footnotesize
		\centerline{Facultad de Ingenier\'ia, Universidad de Deusto}
		\centerline{Avda. Universidades,
			24, 48007, Bilbao, Basque Country, Spain}
	} 
	
	\bigskip
	

	\begin{abstract}
We consider both the internal  and boundary controllability problems  for  wave equations  under non-negativity constraints on the controls.		
First, we prove the steady state controllability property  with nonnegative controls for a general class of wave equations with time-independent coefficients. According to it, the system can be driven from a steady state generated by a strictly positive control to another, by means of nonnegative controls, and provided the time of control is long enough.  Secondly, under the added assumption of conservation and coercivity of the energy,  controllability is proved between states lying on two distinct trajectories. Our methods are described and developed in an abstract setting,  to be applicable to a wide variety of control systems.
		
	\end{abstract}
	\medskip
	\begin{center}
		\textit{Dedicated to Piermarco Cannarsa on the occasion of his 60th birthday}
	\end{center}
	\medskip
	
		\section{Introduction}
		\label{sec:1}
		
		This paper is devoted to the study of the controllability properties of the wave equation, under \textit{positivity} (or nonnegativity) constraints on the \textit{control}.
		
		We address both the case where the control acts in the \textit{interior} of the domain where waves evolve or on its  \textit{boundary}.
		
		This problem has been exhaustively considered in the  \textit{unconstrained} case but very little is known in the presence of constraints on the control, an issue of primary importance in applications, since whatever the applied context under consideration is, the available controls are always limited. For some of the basic literature on the unconstrained controllability of wave-like equations the reader is referred to:  \cite{bardos1992sharp}, \cite{burq1997condition}, \cite{cannarsa1999well}, \cite{cannarsa1999controllability}, \cite{dehman2009analysis}, \cite{ervedoza2010systematic}, \cite{lions1988exact}, \cite{HPR}, \cite{RCS}, \cite{zuazua1990exact}, \cite{zhang2004exact}.
		
		The developments in this paper are motivated by our earlier works on the constrained controllability of heat-like equations (\cite{HCC}, \cite{pighin2017controllability}). In that context, due to the well-known comparison principle for parabolic equations, control and state constraints are interlinked. In particular, for the heat equation, nonnegative controls imply that the solution is nonnegative too, when the initial configuration is nonnegative. Therefore, imposing non-negativity constraints on the control ensures that the state satisfies the non-negativity constraint too. 
		
		This is no longer true for wave-like equations in which the sign of the control does not determine that of solutions.
		However, as mentioned above, from a practical viewpoint, it is very natural to consider the problem of imposing control constraints. In this work, to fix ideas, we focus in the particular case of nonnegative controls.
		
		First we address the problem of steady state controllability in which one aims at controlling the solution from a steady configuration to another one. This problem was addressed in \cite{GCW}, in the absence of constraints on the controls for semilinear wave equations.  Our main contribution here is to control the system by preserving some constraints on the controls given a priori. And, as we shall see, when the initial and final steady states are associated to positive time-independent control functions, the constrained controllability can be guaranteed to hold if the time-horizon is long enough. 
		
		The proof is developed by  a step-wise procedure presented in \cite{pighin2017controllability} (which differs from the one in \cite{GCW} and \cite{HCC}), the so-called ``stair-case argument'', along an arc of steady-states linking the starting and final one. The proof consists on moving recursively from one steady state to the other by means of successive small amplitude controlled trajectories linking successive steady-states. This method and result are presented in a general semigroup setting and it can be successfully implemented for any control system for which controllability holds by means of $L\infty$ controls.
		
		The same recursive approach enables us to prove a state constrained result, under additional dissipativity assumptions. But the time needed for this to hold is even larger than before.
		
		The problem  of steady-state controllability is a particular instance of the more general trajectory control problem, in which, given two controlled  trajectories of the system, both obtained from nonnegative controls, and one state in each of them (possibly corresponding to two different time-instances) one aims at driving one state into the other one by means of nonnegative constrained controls.  This result can also be proved by a similar iterative procedure, but under the added assumption that the system is conservative and its energy coercive so that uncontrolled trajectories are globally bounded.
		
		These results hold for long enough control time horizons. The stepwise procedure we implement needs of a very large control time, much beyond the minimal control time for the control of the wave equation, that is determined by the finite velocity of propagation and the so-called Geometric Control Condition (GCC). It is then natural to introduce the minimal time of control under non-negativity constraints,  in both situations above. 
		
		There is plenty to be done to understand how these constrained minimal times depends on the data to be controlled. Employing d'Alembert's formula for the one dimensional wave equation, we  compute both of them for constant steady states, showing that they  coincide with the unconstrained one. In that case we also show that the property of constrained controllability holds in the minimal time too.

		Controllability under constraints has already been studied for finite-dimensional models and heat-like equations (see \cite{HCC} and \cite{pighin2017controllability}). 
		In both cases it was also proved  that controllability by nonnegative controls fails if time is too short, when the initial datum differs from the final target. This fact exhibits a big difference with respect to  the unconstrained control problem for these systems, where controllability holds in arbitrary small time in both cases. In the wave-like context addressed in this paper the waiting phenomenon, according to which there is a minimal control time for the constrained problem, is  less surprising. But, simultaneously, on the other hand, in some sense, the fact that constraints can be imposed on controls and state seems more striking too.
		
		In \cite{gugat2011optimal}, authors analysed controllability of the one dimensional wave equation, under the more classical bilateral constraints on the control. Our work is, as far as we know, the first one considering unilateral constraints for wave-like equations.

		\subsection{Internal control}
		\label{subsec:1.2}
		Let $\Omega$ be a connected bounded open set of $\mathbb{R}^n$, $n \ge 1$, with $C^{\infty}$ boundary, and let $\omega$ and $\omega_0$ be subdomains of $\Omega$ such that $\overline{\omega_0}\subset \omega$.
		
		Let $\chi\in C^{\infty}(\mathbb{R}^n)$ be a smooth function supported in $\omega$ such that $\mbox{Range}(\chi)\subseteq [0,1]$, $\chi\hspace{-0.1 cm}\restriction_{\omega_0}\equiv 1$.
		
		We assume further that all derivatives of $\chi$ vanish on the boundary of $\Omega$. We will discuss this assumption in subsection \ref{subsec:3.3}.
		
		We consider the wave equation controlled from the interior
		\begin{equation}\label{wave_internal_1}
		\begin{cases}
		y_{tt}-\Delta y+cy=u\chi\hspace{0.6 cm} & \mbox{in} \hspace{0.10 cm}(0,T)\times \Omega\\
		y=0  & \mbox{on}\hspace{0.10 cm} (0,T)\times \partial \Omega\\
		y(0,x)=y^0_0(x), \ y_t(0,x)=y^1_0(x)  & \mbox{in} \hspace{0.10 cm}\Omega\\
		\end{cases}
		\end{equation}
		where $y=y(t,x)$ is the state, while $u=u(t,x)$ is the control whose action is localized on $\omega$ by means of  multiplication with the smooth cut-off function $\chi$. The coefficient $c=c(x)$ is $C^{\infty}$ smooth in $\overline{\Omega}$.
		
		It is well known in the literature (e.g. \cite[section 7.2]{PDE}) that, for any initial datum $(y_0^0,y_0^1)\in H^1_0(\Omega)\times L^2(\Omega)$ and for any control $u\in L^2((0,T)\times\omega)$, the above problem admits an unique solution $(y,y_t)\in C^0([0,T];H^1_0(\Omega)\times L^2(\Omega))$, with $y_{tt}\in L^2(0,T;H^{-1}(\Omega))$.
		
		We assume the \textit{Geometric Control Condition} on $(\Omega,\omega_0,T^*)$, which basically asserts that all  bicharacteristic rays enter in the subdomain $\omega_0$ in time smaller than $T^*$. This geometric condition is actually equivalent to the property of (unconstrained) \textit{controllability} of the system (see \cite{bardos1992sharp} and \cite{burq1997condition}).
		
		\subsubsection{Steady state controllability}
		
		The purpose of our first result is to show that, in time large, we can drive \eqref{wave_internal_1} from one steady state to another by a \textit{nonnegative} control, assuming the uniform \textit{positivity} of the control defining the steady states.
		
		More precisely, a steady state is a solution to
		\begin{equation}\label{wave_internal_steady_1}
		\begin{cases}
		-\Delta \overline{y}+c\overline{y}=\overline{u}\chi\hspace{0.6 cm} & \mbox{in} \hspace{0.10 cm} \Omega\\
		\overline{y}=0 & \mbox{on}\hspace{0.10 cm}  \partial \Omega,\\
		\end{cases}
		\end{equation}
		where $\overline{u}\in L^2(\omega)$ and $\overline{y}\in H^2(\Omega) \cap H^1_0(\Omega)$. Note that,  as a consequence of Fredholm Alternative (see \cite[Theorem 5.11 page 84]{EPG}), the existence and uniqueness of the solution of this elliptic problem  can be guaranteed whenever zero is not an eigenvalue of $-\Delta+cI:H^1_0(\Omega)\longrightarrow H^{-1}(\Omega)$.

		The following result holds:
		
		\begin{theorem}[Controllability between steady states]\label{th_2}
			Take $\overline{y}_0$ and $\overline{y}_1$ in $H^2(\Omega) \cap H^1_0(\Omega)$ steady states associated to $L^2$-controls $\overline{u}^1$ and $\overline{u}^2$, respectively. Assume further that there exists $\sigma>0$ such that
			\begin{equation}\label{th_2_uniform_positiviveness_control}
			\overline{u}^i\geq \sigma,	\quad\mbox{a.e. in}\ \omega.
			\end{equation}
			Then, if $T$ is large enough, there exists 
			$u\in L^2((0,T)\times\omega)$, a control such that
			\begin{itemize}
				\item the unique solution $(y,y_t)$ to the problem \eqref{wave_internal_1} with initial datum $(\overline{y}_0,0)$ and control $u$ verifies $(y(T,\cdot),y_t(T,\cdot))=(\overline{y}_1,0)$;
				\item $
				u\geq 0$ a.e. on $(0,T)\times \omega.$
			\end{itemize}
		\end{theorem}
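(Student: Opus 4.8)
The plan is to implement the ``staircase argument'' of \cite{pighin2017controllability} along the segment of steady states joining $\overline{y}_0$ and $\overline{y}_1$. First I would exploit the \emph{linearity} of the elliptic problem \eqref{wave_internal_steady_1}: for $s\in[0,1]$ set
\[
\overline{y}_s:=(1-s)\,\overline{y}_0+s\,\overline{y}_1\in H^2(\Omega)\cap H^1_0(\Omega),\qquad \overline{u}_s:=(1-s)\,\overline{u}^1+s\,\overline{u}^2\in L^2(\omega),
\]
so that $\overline{y}_s$ is the steady state associated with $\overline{u}_s$. The decisive gain is that, by \eqref{th_2_uniform_positiviveness_control} and convexity, $\overline{u}_s\geq\sigma$ a.e. in $\omega$, uniformly in $s\in[0,1]$; moreover each $(\overline{y}_s,0)$ is an equilibrium of \eqref{wave_internal_1} sustained by the (admissible, being nonnegative) time-independent control $\overline{u}_s$.

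The key tool for moving between nearby steady states is the unconstrained controllability of \eqref{wave_internal_1} \emph{by $L^\infty$ controls}. Under the Geometric Control Condition on $(\Omega,\omega_0,T^*)$, the system is controllable in time $T^*$ (see \cite{bardos1992sharp}, \cite{burq1997condition}); I would use the sharpened statement that, for targets in $(H^2(\Omega)\cap H^1_0(\Omega))\times H^1_0(\Omega)$, one can find a control bounded in $L^\infty((0,T^*)\times\omega)$, obeying a linear estimate in the target norm — this is exactly where the smoothness hypotheses on $\chi$ (all derivatives vanishing on $\partial\Omega$) are used, to propagate regularity, and it is in effect the hypothesis under which the abstract semigroup version of the theorem is proved. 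Granting it, for $a,b\in[0,1]$ write $y=\overline{y}_a+z$: steering \eqref{wave_internal_1} from $(\overline{y}_a,0)$ to $(\overline{y}_b,0)$ in time $T^*$ amounts to steering $z$ from $(0,0)$ to $((b-a)(\overline{y}_1-\overline{y}_0),0)$, hence there is $v_{a,b}\in L^\infty((0,T^*)\times\omega)$ with $\|v_{a,b}\|_{L^\infty((0,T^*)\times\omega)}\leq C\,|b-a|\,\|\overline{y}_1-\overline{y}_0\|_{H^2\cap H^1_0}$ achieving it, and $w_{a,b}:=\overline{u}_a+v_{a,b}$ is the corresponding control for the original trajectory.

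Now I would concatenate. Fix $N:=\lceil C\,\|\overline{y}_1-\overline{y}_0\|_{H^2\cap H^1_0}/\sigma\rceil$, set $s_k:=k/N$, $k=0,\dots,N$, and on $[kT^*,(k+1)T^*]$ apply the translated control $t\mapsto w_{s_k,s_{k+1}}(t-kT^*,\cdot)$. Since each step ends at $(\overline{y}_{s_{k+1}},0)$ with zero velocity and the next begins there, the states glue into a continuous $(y,y_t)\in C([0,NT^*];H^1_0\times L^2)$ driven by $u\in L^2((0,NT^*)\times\omega)$ with $(y(0),y_t(0))=(\overline{y}_0,0)$ and $(y(NT^*),y_t(NT^*))=(\overline{y}_1,0)$; to cover an arbitrary $T\geq T_0:=NT^*$ one keeps $u\equiv\overline{u}^2$ on $[NT^*,T]$, holding the state at $(\overline{y}_1,0)$. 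Nonnegativity is then immediate: on the $k$-th step $u=\overline{u}_{s_k}+v_{s_k,s_{k+1}}\geq \sigma-\tfrac{C}{N}\|\overline{y}_1-\overline{y}_0\|_{H^2\cap H^1_0}\geq 0$ a.e. by the choice of $N$, and on the waiting interval $u=\overline{u}^2\geq\sigma>0$, so $u\geq0$ a.e. on $(0,T)\times\omega$.

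The routine part is the bookkeeping of Steps 1, 3 and 4 (the convex combination of steady states, the gluing of finitely many $L^\infty$ pieces, and the counting argument fixing $N$). The genuine obstacle is the controllability input of Step 2: one must produce controls bounded in $L^\infty((0,T^*)\times\omega)$ — so that the \emph{pointwise a.e.} sign constraint can actually be enforced by absorbing the correction into the slack $\sigma$ — together with the linear dependence of the control norm on the target, rather than merely the $L^2$-controls given by HUM. This regularized (and quantitative) controllability statement, which in the abstract setting is taken as a standing assumption, is what the concrete multi-d wave argument has to supply.
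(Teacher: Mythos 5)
Your overall strategy is the paper's: parametrize the segment of steady states $\overline{y}_s$, $\overline{u}_s$, exploit that $\overline{u}_s\geq\sigma$ uniformly, and climb the segment in $N$ small controllability steps, choosing $N$ so large that the correction controls are pointwise smaller than the slack $\sigma$. The bookkeeping (convexity, gluing, counting, the waiting phase) is fine.

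The genuine gap is in the controllability input you ``grant'' in Step 2, and it is not a minor technicality — it is precisely the point the paper's argument is built to handle. You assert that for targets in $\left(H^2(\Omega)\cap H^1_0(\Omega)\right)\times H^1_0(\Omega)$ one can produce controls bounded in $L^{\infty}((0,T^*)\times\omega)$ with a linear estimate in that norm. The smooth controllability result actually available (Lemma \ref{Lemma_smooth_contr_wave_int}, i.e. \cite{ervedoza2010systematic}) produces controls in $L^{\infty}(0,T_0;H^{s(n)}(\omega))$ with $s(n)=\lfloor n/2\rfloor+1$ — which embeds into $L^{\infty}((0,T_0)\times\omega)$ only because $s(n)>n/2$ — and it requires the data to lie in $D(A^{s(n)})$, i.e. roughly $H^{s(n)+1}\times H^{s(n)}$ with boundary compatibility conditions. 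For $n\geq 2$ this is strictly more regularity than the $H^2\times H^1_0$ you assume, and the steady states in Theorem \ref{th_2} are only $H^2$ because they are generated by $L^2$-controls. So the lemma you rely on, as stated, is unsupported (and for large $n$ simply not what is known).

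The missing idea is the regularization phase of Lemma \ref{lemma_10}: on each step, before invoking smooth controllability, the paper spends a unit time interval applying the control $\rho(t)u_k+(1-\rho(t))u_{k+1}$ — a convex combination of admissible controls, hence automatically $\geq\sigma$ with no smallness needed. Writing the difference with the target steady state as $\rho(t)(y_k-y_{k+1})+\eta_2(t)$, where $\eta_2$ solves an inhomogeneous problem with source $-\rho'(t)(y_k-y_{k+1})$ vanishing near $t=0$ and for $t\geq 1/2$, Lemma \ref{lemma_semigroup_4} shows that at $t=1$ the state difference lies in $D(A^{s(n)})$ with norm controlled by $\|y_k-y_{k+1}\|_{H^1_0\times L^2}$ alone. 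Only then is Definition \ref{contr_2_abstract} applied, yielding a correction small in $H^{s(n)}(\omega)\hookrightarrow C^0(\overline{\omega})$, which is what makes the pointwise sign constraint absorbable into $\sigma$. Without this two-phase structure per step, the quantitative $L^\infty$ controllability you need for $H^2$ data is not available, and the counting argument has nothing to count.
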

		
		Theorem \ref{th_2} is proved in subsection \ref{subsec:3.1}. Inspired by \cite{GCW}, we implement a recursive ``stair-case'' argument to keep the control  in a narrow tubular neighborhood of the segment connecting the controls defining the initial and final data. This will guarantee the actual positivity of the control obtained.
		
		
		\subsubsection{Controllability between trajectories}
		
		The purpose of this section is to extend the above result, under the additional assumption $c(x)>-\lambda_1$, where $\lambda_1$ is the first eigenvalue of the Dirichlet Laplacian in $\Omega$. This guarantees that the energy of the system defines a norm
		\begin{equation*}
		\|(y^0,y^1)\|_{E}^2=\int_{\Omega} \left[\|\nabla y^0\|^2 +c{\left(y^0\right)}^2\right] dx+\int_{\Omega} (y^1)^2 dx
		\end{equation*}
		on $H^1_0(\Omega)\times L^2(\Omega)$. Thus, by conservation of the energy, uncontrolled solutions are uniformly bounded for all $t$.

		
		\begin{figure}[htp]
			\begin{center}
				\includegraphics[width=8cm]{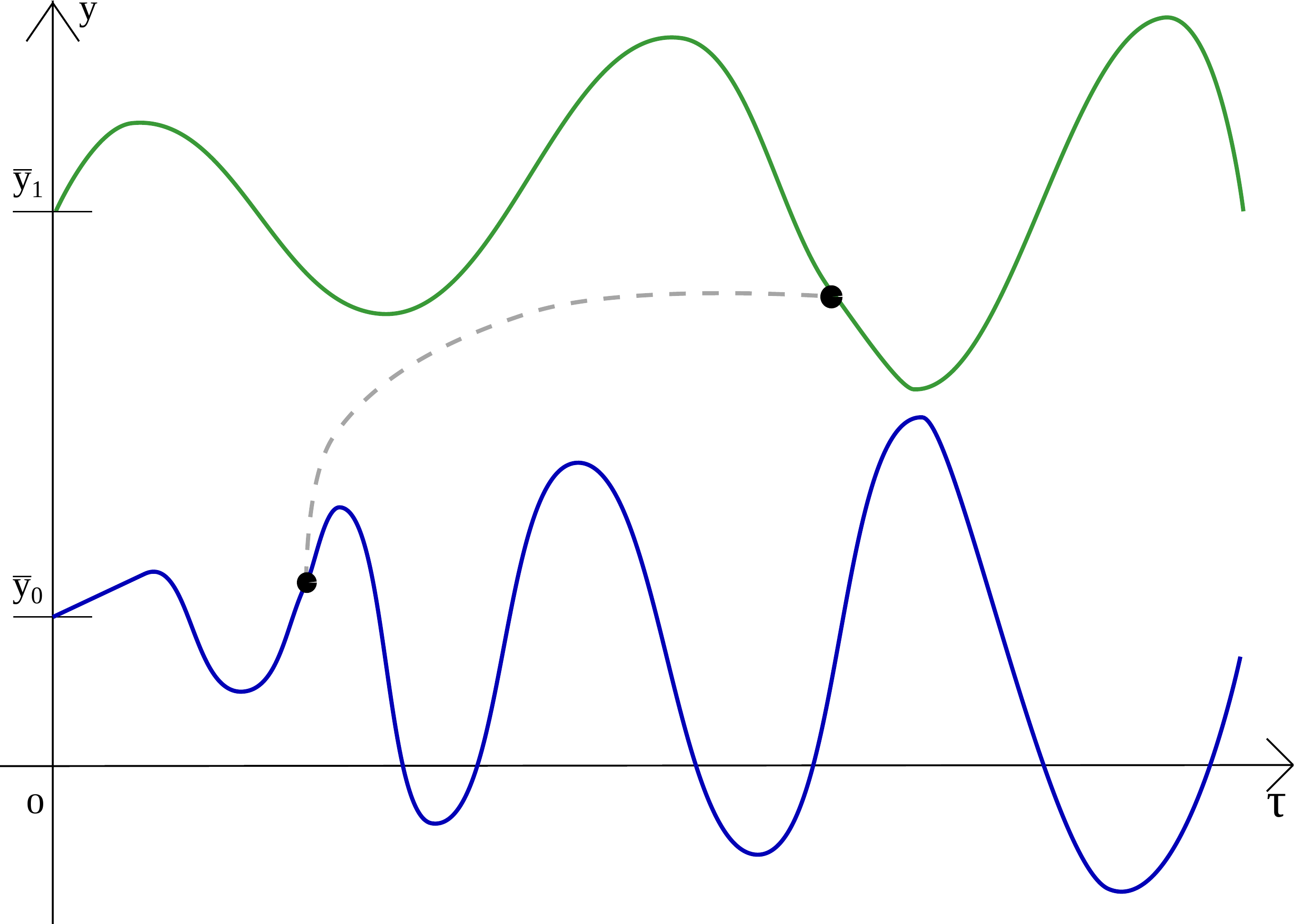}\\
				\caption{Controllability between data lying on trajectories.}\label{INDAM 2018 intro}
			\end{center}
		\end{figure}
		
		We assume that both, the initial datum $(y_0^0,y_0^1)$ and the final target $(y_1^0,y_1^1)$, belong  to controlled trajectories (see figure \ref{INDAM 2018 intro}) \begin{equation}\label{initial_final_elements_trajectories}
		(y_i^0,y_i^1)\in \left\{ (\overline{y}_i(\tau,\cdot),(\overline{y}_i)_t(\tau,\cdot) \ | \ \tau\in \mathbb{R}\right\},
		\end{equation}
		where $(\overline{y}_i,(\overline{y}_i)_t)$ solve \eqref{wave_internal_1} with \textit{nonnegative} controls. We suppose that these trajectories are smooth enough, namely
		$$(\overline{y}_i,(\overline{y}_i)_t)\in C^{s(n)}(\mathbb{R};H^1_0(\Omega)\times L^2(\Omega)),$$ with $s(n)=\lfloor{n/2}\rfloor+1$. Hereafter, we denote by $(\overline{y}_0,(\overline{y}_0)_t)$ the initial trajectory, while $(\overline{y}_1,(\overline{y}_1)_t)$ stands for the target one.
		
		Note that the regularity is assumed only in time and not in space. This allows to consider weak steady-state solutions.
		
		We can in particular choose as final target the null state $(y_1^0,y_1^1)=(0,0)$. It is important to highlight that this is something specific to the wave equation. In the parabolic case (see \cite{HCC} and \cite{pighin2017controllability}), this was prevented by the comparison principle, since the zero target cannot be reached in finite time with non-negative controls. But, for the wave equation, the maximum principle does not hold and this obstruction does not apply.
		
		The following result holds
		
		\begin{theorem}[Controllability between trajectories]\label{th_3}
			Suppose $c(x)>-\lambda_1$, for any $x\in\overline{\Omega}$. Let $(\overline{y}_i,(\overline{y}_i)_t)\in C^{s(n)}(\mathbb{R};H^1_0(\Omega)\times L^2(\Omega))$  be solutions to \eqref{wave_internal_1} associated to controls $\overline{u}^i\geq 0$ a.e. in $(0,T)\times \omega$, $i=0, 1$. Take $(y_0^0,y_0^1)=(\overline{y}_0(\tau_0,\cdot),(\overline{y}_0)_t(\tau_0,\cdot))$ and $(y_1^0,y_1^1)=(\overline{y}_1(\tau_1,\cdot),(\overline{y}_1)_t(\tau_1,\cdot))$ for arbitrary values of $\tau_0$ and $\tau_1$. Then, in time $T>0$  large enough, there exists a control $u\in L^{2}((0,T)\times \omega)$ such that
			\begin{itemize}
				\item the unique solution $(y,y_t)$ to \eqref{wave_internal_1} with initial datum $(y_0^0,y_0^1)$ verifies the end condition $(y(T,\cdot),y_t(T,\cdot))=(y_1^0,y_1^1)$;
				\item $u\geq 0$ a.e. in $(0,T)\times \omega$.
			\end{itemize}
		\end{theorem}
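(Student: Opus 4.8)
The plan is to combine a preliminary \emph{lifting} of the two trajectories with the stair-case argument already used for Theorem \ref{th_2}; the lifting is what is new, and it is forced by the fact that the reference controls $\overline u^i$ are only assumed nonnegative and may vanish, so the stair-case cannot be run directly along $\overline y_0$. As a first reduction, I would use that the wave equation is reversible in time: constructing a nonnegative control steering $(y_0^0,y_0^1)$ to $(y_1^0,y_1^1)$ on $[0,T]$ is the same as constructing one steering $(y_1^0,-y_1^1)$ to $(y_0^0,-y_0^1)$, and the time-reversal of $\overline y_1$ is again a solution with nonnegative control, so the two trajectories play symmetric roles and it is enough to describe the construction on the side of $\overline y_0$. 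Also, flowing along $\overline y_i$ simply uses the admissible control $\overline u^i\ge 0$, so one may freely ``wait'' along either trajectory; hence it suffices to join \emph{some} state of $\overline y_0$ to \emph{some} state of $\overline y_1$ by a nonnegative control, in large time.

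Second, I would do the lifting. Since $c>-\lambda_1$, there is a unique $\phi\in H^1_0(\Omega)$ with $-\Delta\phi+c\phi=\chi$; let $\psi$ solve $\psi_{tt}-\Delta\psi+c\psi=\chi$, $\psi(0)=\psi_t(0)=0$. Then $\psi-\phi$ solves the uncontrolled equation with data $(-\phi,0)$, so conservation and coercivity of the energy give $\sup_{t\ge 0}\|(\psi,\psi_t)(t)\|_E<\infty$. Starting from $(\overline y_0(\tau_0),(\overline y_0)_t(\tau_0))$, I would apply on a fixed interval $[0,L]$ the control $t\mapsto\overline u^0(\tau_0+t)+R$ with a suitable fixed $R>0$: it is visibly nonnegative, the resulting solution is $\overline y_0(\tau_0+t)+R\psi(t)$, and at $t=L$ it lands on the trajectory $\Gamma_0(s):=\overline y_0(\tau_0+L+s)+R\psi(L+s)$. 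The key point is that $\Gamma_0$ solves the wave equation with control $\overline u^0(\tau_0+L+s)+R\ge R>0$; the lift has replaced the possibly vanishing $\overline u^0$ by a control bounded below by a positive constant. The mirror-image construction run backward in time produces, on the target side, together with a nonnegative control realizing the final ``descent'' back onto $(y_1^0,y_1^1)$ in time $L$, a trajectory $\Gamma_1$ whose control is likewise $\ge R$.

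Third, I would run the stair-case between $\Gamma_0$ and $\Gamma_1$. Viewing $\Gamma_0,\Gamma_1$ as functions on a common interval $[0,T-2L]$ ($\Gamma_0$ starting where the lift of $\overline y_0$ ends, $\Gamma_1$ ending where the descent departs), consider the path of trajectories $Y_\Theta:=(1-\Theta)\Gamma_0+\Theta\Gamma_1$, $\Theta\in[0,1]$. It solves the wave equation with control $(1-\Theta)\overline u^0+\Theta\overline u^1+R\ge R$, because $\overline u^0,\overline u^1\ge 0$; this uniform positivity is exactly what the argument of Theorem \ref{th_2} requires. Splitting $[0,T-2L]$ into $N$ slabs of fixed length $\tau>T^*$, one moves from $Y_{\Theta_{k-1}}$ to $Y_{\Theta_k}$ by writing the solution as $Y_{\Theta_k}$ plus an $L^\infty$-controllability correction on that slab, whose $L^\infty$-norm is $\lesssim|\Theta_k-\Theta_{k-1}|$ times the relevant norm of $\Gamma_0-\Gamma_1$ --- it is to get this correction in $L^\infty$ that the regularity $(\overline y_i,(\overline y_i)_t)\in C^{s(n)}(\mathbb R;H^1_0\times L^2)$ is needed. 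Since $\Gamma_0-\Gamma_1$ equals $\overline y_0-\overline y_1$ plus bounded $\psi$-terms, the coercivity/conservation hypothesis bounds these norms, so the admissible step $|\Theta_k-\Theta_{k-1}|$ stays bounded below, $N$ is finite, and on every slab the total control $(\ge R)+(\text{correction})$ stays $\ge 0$. Concatenating the lift of $\overline y_0$ (control $\ge R$), the stair-case (control $\ge R-(\text{tube})\ge 0$) and the descent onto $\overline y_1$ (control $\ge R$), with, if necessary, some harmless waiting along $\Gamma_0,\Gamma_1$ to match up the time instants, gives the desired nonnegative control for $T$ large.

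The hard part is exactly the step that forces the lifting. A naive stair-case directly between $\overline y_0$ and $\overline y_1$ breaks down near the two endpoints, where the reference control is close to $0$: there the admissible step sizes shrink geometrically and the endpoint is never reached. One must therefore first reach the auxiliary trajectories $\Gamma_i$, on which the control is uniformly positive, and check both that this can be done with a genuinely nonnegative control and that the global energy bounds --- which keep the number of stair-case steps finite --- are not lost; this last point is where coercivity and conservation of the energy enter. As a side remark, the null target $(0,0)$ is admissible here, being a state of the trivial trajectory with control $\overline u\equiv 0\ge 0$, consistently with the remark preceding the statement.
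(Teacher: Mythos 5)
Your overall strategy --- first modify the reference data so that the reference control becomes uniformly positive, then perform a small correction within the margin this creates --- is the right one, and your ``lifting'' $\overline{y}_i\mapsto \overline{y}_i+R\psi$ (adding the constant $R$ to the control, with $\psi$ bounded because $\psi-\phi$ evolves freely) is a clean variant of what the paper actually does in Theorem \ref{th_8}, namely interpolating the controls towards a fixed $\sigma\in \mbox{int}^V(\mathscr{U}_{\mbox{\tiny{ad}}}\cap V)$ by cut-offs supported near $t=0$ and $t=T$, so that the reference control is \emph{identically} $\sigma$ in between. The gaps are in your stair-case phase.

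First, the correction control on each slab must come from smooth controllability (Lemma \ref{Lemma_smooth_contr_wave_int}), which produces $L^{\infty}(V)$ controls only for data in $D(A^{s})$. The state to be annihilated at step $k$ is $(\Theta_k-\Theta_{k-1})(\Gamma_0-\Gamma_1)(t_k)$, and the values $\Gamma_0(t)-\Gamma_1(t)$ lie only in $H^1_0(\Omega)\times L^2(\Omega)$: the hypothesis $\overline{y}_i\in C^{s(n)}(\mathbb{R};H^1_0(\Omega)\times L^2(\Omega))$ is regularity in \emph{time}, not in space, and does not put these states in $D(A^{s})$. Each step therefore needs a preliminary regularizing phase (follow the difference of the reference controls damped by the cut-off $\rho$ for a unit of time, then invoke Lemma \ref{lemma_semigroup_4} to convert time-regularity of the source into $D(A^{s})$-regularity of the state once the source is off); this is Step 2 of the proof of Theorem \ref{th_8} and is absent from your argument. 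Second, your lower bound on the admissible step $|\Theta_k-\Theta_{k-1}|$ requires the relevant norm of $\Gamma_0-\Gamma_1$ to be bounded uniformly over the whole interval $[0,T-2L]$, whose length grows with the number of steps $N$. You justify this by ``conservation/coercivity'', but conservation bounds only \emph{free} trajectories, whereas $\overline{y}_0-\overline{y}_1$ is forced by $\chi(\overline{u}^0-\overline{u}^1)$ with $\overline{u}^i$ merely nonnegative and $L^2_{loc}$, so it may grow in time; the argument then becomes circular ($N$ depends on a bound over an interval of length proportional to $N$). The paper avoids both issues at once: away from unit neighbourhoods of the endpoints the reference control is the constant $\sigma$, so only $\|\overline{y}_0\|_{C^{s}([\tau_0,\tau_0+1];H)}$ and $\|\overline{y}_1\|_{C^{s}([\tau_1-1,\tau_1];H)}$ enter the estimates, the regularized difference is a single $D(A^{s})$ state transported by the unitary group, and smallness of the correction is obtained not by a stair-case but by Lemma \ref{lemma_stair_case_6}, which splits that one state into $N$ equal pieces and kills them successively with controls of size $O(1/N)$. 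Repairing your argument essentially leads you back to that construction.
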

		
		\begin{remark}
			This result  is more general than Theorem \ref{th_2} for two reasons
			\begin{enumerate}
				\item it enables us to link more general data, with nonzero \textit{velocity}, and not only steady states;
				\item the control defining the initial and target trajectories is assumed to be only \textit{nonnegative}. This assumption is weaker than the \textit{uniform positivity} one required in Theorem \ref{th_2}.
			\end{enumerate}
			
			On the other hand, the present result requires the condition $c(x)>-\lambda_1$ on the potential $c=c(x)$.
		\end{remark}
		
		We give the proof of Theorem \ref{th_3} in subsection \ref{subsec:3.2}.
		
		\subsection{Boundary control}
		\label{subsec:1.3}

		Let $\Omega$ be a connected bounded open set of $\mathbb{R}^n$, $n \ge 1$, with $C^{\infty}$ boundary, and let $\Gamma_0$ and $\Gamma$ be open subsets of $\partial \Omega$ such that $\overline{\Gamma_0}\subset \Gamma$.
		
		Let $\chi\in C^{\infty}(\partial \Omega)$ be a smooth function such that $\mbox{Range}(\chi)\subseteq [0,1]$, $\mbox{supp}(\chi)\subset \Gamma$ and $\chi\hspace{-0.1 cm}\restriction_{\Gamma_0}\equiv 1$.
		
		We now consider the wave equation controlled on the \textit{boundary}
		\begin{equation}\label{wave_boundary_1}
		\begin{cases}
		y_{tt}-\Delta y+cy=0\hspace{0.6 cm} & \mbox{in} \hspace{0.10 cm}(0,T)\times \Omega\\
		y=\chi u  & \mbox{on}\hspace{0.10 cm} (0,T)\times \partial \Omega\\
		y(0,x)=y^0_0(x), \ y_t(0,x)=y^1_0(x)  & \mbox{in} \hspace{0.10 cm}\Omega\\
		\end{cases}
		\end{equation}
		where $y=y(t,x)$ is the state, while $u=u(t,x)$ is the boundary control localized on $\Gamma$ by the cut-off function $\chi$. As before, the space-dependent coefficient $c$ is supposed to be $C^{\infty}$ regular in $\overline{\Omega}$.
		
		By transposition (see \cite{lions1988exact})
		, one can realize that for any initial datum $(y_0^0,y_0^1)\in L^2(\Omega)\times H^{-1}(\Omega)$ and control $u\in L^2((0,T)\times\Gamma)$, the above problem admits an unique solution $(y,y_t)\in C^0([0,T];L^2(\Omega)\times H^{-1}(\Omega))$.

		We assume the \textit{Geometric Control Condition} on $(\Omega,\Gamma_0,T^*)$ which asserts that all generalized bicharacteristics touch the sub-boundary $\Gamma_0$ at a non diffractive point in time smaller than $T^*$. By now, it is well known in the literature that this geometric condition is equivalent to (unconstrained) controllability (see \cite{bardos1992sharp} and \cite{burq1997condition}).
		
		
		\subsubsection{Steady state controllability}
		
		As in the context of internal control, our first goal is to show that, in time large, we can drive \eqref{wave_internal_1} from one steady state to another, assuming the uniform \textit{positivity} of the controls defining these steady states.
		
		In the present setting a steady state is a time independent solution to \eqref{wave_boundary_1}, namely a solution to
		\begin{equation}\label{steady_wave_boundary_1}
		\begin{cases}
		-\Delta \overline{y}+c\overline{y}=0\hspace{0.6 cm} & \mbox{in} \hspace{0.10 cm}\Omega\\
		\overline{y}=\chi \overline{u}  & \mbox{on}\hspace{0.10 cm}\partial \Omega.\\
		\end{cases}
		\end{equation}
		In the present setting, $\overline{u}\in L^2(\partial\Omega)$ and  $\overline{y}\in L^2(\Omega)$ solves the above problem in the sense of transposition (see \cite[chapter II, section 4.2]{LIO} and \cite{LM1}).
		
		As in the context of  internal control, if $0$ is not an eigenvalue of
		$-\Delta+cI:H^1_0(\Omega)\longrightarrow H^{-1}(\Omega)$, for any boundary control $\overline{u}\in L^2(\partial\Omega)$, there exists a unique $\overline{y}\in L^2(\Omega)$ solution to \eqref{steady_wave_boundary_1} with boundary control $\overline{u}$. This can be proved combining Fredholm Alternative (see \cite[Theorem 5.11 page 84]{EPG}) and transposition techniques \cite[Theorem 4.1 page 73]{LIO}.

		We prove the following result
		\begin{theorem}[Steady state controllability]\label{th_1}
			Let $\overline{y}_i$  be steady states defined by controls $\overline{u}^i$, $i=0, 1$, so that 
			\begin{equation}\label{th_1_uniform_positiviveness_control}
			\overline{u}^i\geq \sigma,	\quad\mbox{on}\ \Gamma,
			\end{equation}
			with $\sigma>0$.
			
			Then, if $T$ is large enough, there exists 
			$u\in L^{2}([0,T]\times\Gamma)$, a control such that
			\begin{itemize}
				\item the unique solution $(y,y_t)$ to \eqref{wave_boundary_1} with initial datum $(\overline{y}_0,0)$ and control $u$ verifies $(y(T,\cdot),y_t(T,\cdot))=(\overline{y}_1,0)$;
				\item $
				u\geq 0$ on $(0,T)\times \Gamma.$
			\end{itemize}
		\end{theorem}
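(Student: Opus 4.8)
I would run the step-wise ``stair-case'' argument exactly as for Theorem~\ref{th_2}, now adapted to the boundary framework \eqref{wave_boundary_1}--\eqref{steady_wave_boundary_1}. First, recalling the standing assumption that $0$ is not an eigenvalue of $-\Delta+cI$, the control-to-steady-state map $\overline{u}\mapsto\overline{y}$ defined by \eqref{steady_wave_boundary_1} is linear and bounded from $L^2(\partial\Omega)$ into $L^2(\Omega)$. I would interpolate the two given controls, setting $\overline{u}^{s}=(1-s)\overline{u}^0+s\overline{u}^1$ and letting $\overline{y}^{s}=(1-s)\overline{y}_0+s\overline{y}_1$ be the associated steady state, $s\in[0,1]$. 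By \eqref{th_1_uniform_positiviveness_control} and convexity, $\overline{u}^{s}\ge\sigma$ on $\Gamma$ for every $s$, and $s\mapsto\overline{y}^{s}$ is Lipschitz in $L^2(\Omega)$, with constant $L:=\|\overline{y}_1-\overline{y}_0\|_{L^2(\Omega)}$.

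Next, I would fix a large integer $N$, set $s_k=k/N$, $t_k=kT^*$, $T=NT^*$, and pick once and for all a smooth profile $\phi:[0,T^*]\to[0,1]$, equal to $0$ near $0$ and to $1$ near $T^*$, so that all its time-derivatives vanish at the endpoints. On the $k$-th window $[t_k,t_{k+1}]$ I would steer from $(\overline{y}^{s_k},0)$ to $(\overline{y}^{s_{k+1}},0)$ by looking for the solution in the form $y=(1-\phi(\cdot-t_k))\,\overline{y}^{s_k}+\phi(\cdot-t_k)\,\overline{y}^{s_{k+1}}+z$. Since each $\overline{y}^{s}$ solves the \emph{homogeneous} elliptic problem $-\Delta\overline{y}^{s}+c\overline{y}^{s}=0$, the elliptic terms cancel and $z$ satisfies \eqref{wave_boundary_1} with interior source $-\phi''(\cdot-t_k)(\overline{y}^{s_{k+1}}-\overline{y}^{s_k})$, boundary datum $\chi\bigl(u-\overline{u}^{\theta_k}\bigr)$ where $\theta_k(t)=(1-\phi(t-t_k))s_k+\phi(t-t_k)s_{k+1}\in[s_k,s_{k+1}]$, and zero initial and final data. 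Here $\overline{u}^{\theta_k}$ is a convex combination of $\overline{u}^{s_k}$ and $\overline{u}^{s_{k+1}}$, hence $\overline{u}^{\theta_k}\ge\sigma$ on $(t_k,t_{k+1})\times\Gamma$. Using the unconstrained controllability of \eqref{wave_boundary_1} in time $T^*$ furnished by the Geometric Control Condition (\cite{bardos1992sharp,burq1997condition}), applied to the above source via Duhamel's formula, I would produce a boundary control $v_k:=u-\overline{u}^{\theta_k}$ achieving this and --- by linearity of the control-to-state map and the bounded right inverse of the controllability operator --- with $\|v_k\|\le C\,L/N$ in the pertinent control norm, $C$ independent of $k$. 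Concatenating the pieces $\overline{u}^{\theta_k}+v_k$ over $k=0,\dots,N-1$ (the trajectory visiting the intermediate rest states $(\overline{y}^{s_k},0)$) would then give $u\in L^2((0,T)\times\Gamma)$ steering $(\overline{y}_0,0)$ to $(\overline{y}_1,0)$ in time $T=NT^*$; any larger $T$ is handled by prepending a phase on which one keeps the constant positive control $\overline{u}^0$ and the state rests at $\overline{y}_0$.

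The point that remains --- and, I expect, the main obstacle --- is positivity. On window $k$ one has $u=\overline{u}^{\theta_k}+v_k\ge\sigma-|v_k|$, so it would suffice to arrange $\|v_k\|_{L^\infty((0,T^*)\times\Gamma)}\le\sigma$ and then choose $N$ with $C\,L/N<\sigma$, giving $u\ge\sigma-C\,L/N>0$ throughout. Thus what is really needed is that the correction controls be small not merely in $L^2$ but in $L^\infty$ of space--time, with the bound dominated by the $L^2(\Omega)$-size of the source; equivalently, that \eqref{wave_boundary_1} be controllable in time $T^*$ by $L^\infty((0,T^*)\times\Gamma)$ controls of norm bounded by the data --- which is precisely the ``$L^\infty$-controllability'' input singled out in our abstract setting. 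This is delicate because the classical HUM boundary control of data in $L^2(\Omega)\times H^{-1}(\Omega)$ lies only in $L^2((0,T^*)\times\Gamma)$, while the steady states $\overline{y}^{s}$ here are themselves only of class $H^{1/2}(\Omega)$. I would try to overcome it by establishing such an $L^\infty$ bound via a gain-of-regularity argument for exact controls, leaning on the smoothness of the cut-off $\chi$ (which confines the control to $\Gamma$ and permits smoother test functions in the transposition identities); once this bound is available, the remaining estimates are routine and the staircase closes.
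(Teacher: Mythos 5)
Your overall architecture --- interpolating the steady controls along the segment, absorbing the interpolation into the state via a smooth time profile, and correcting each small residual by an exact control of size $O(1/N)$ --- is the same staircase the paper runs (Theorem \ref{th_6} applied through Lemma \ref{lemma_10}), and your reduction of positivity to an $L^\infty$ bound on the corrections is exactly the right reduction. The problem is that you stop at the decisive step. What closes the argument in the paper is the property you call ``$L^\infty$-controllability,'' made precise as \emph{smooth controllability} (Definition \ref{contr_2_abstract}): for data in $D(A^{s})$ with $s=s(n)=\lfloor n/2\rfloor+1$ there is an exact control in $L^\infty(0,T_0;V)$ with $V=H^{s(n)-\frac12}(\Gamma)$ and norm bounded by $C\|\cdot\|_{D(A^{s})}$; since $H^{s(n)-\frac12}(\Gamma)\hookrightarrow C^0(\overline\Gamma)$, this is what converts ``small residual'' into ``pointwise small correction.'' This is not something one proves in passing by ``leaning on the smoothness of $\chi$'': it is Lemma \ref{lemma_smooth_controllability_wave_boundary}, imported from \cite[Theorem 5.4]{ervedoza2010systematic}, and it is a genuinely nontrivial regularity theory for exact boundary controls. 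Leaving it as ``I would try to establish such an $L^\infty$ bound'' leaves the proof without its engine.

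Second, even granting that lemma, there is a regularity mismatch you do not resolve: smooth controllability applies to data in $D(A^{s(n)})$, whereas your residual $z$ is driven by the source $-\phi''(\cdot-t_k)\bigl(\overline{y}^{\,s_{k+1}}-\overline{y}^{\,s_k}\bigr)$, whose spatial factor is only an $L^2(\Omega)$ function (the steady states of \eqref{steady_wave_boundary_1} are defined by transposition). The paper bridges this with Lemma \ref{lemma_semigroup_4}: because the inhomogeneity is smooth in time, has vanishing derivatives at the start of the window, and is switched off before its end, the \emph{uncontrolled} residual lands in $D(A^{s(n)})$ with norm controlled by the $H$-norm of the data, and only then is the smooth control applied on the remaining part of the window. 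Your construction contains the raw material for this (your $\phi$ has vanishing derivatives at both endpoints), but you attempt to control $z$ while the source is still acting, so the $D(A^{s(n)})$ bookkeeping never happens and the hypothesis of the regular-control theory is never verified. Restructure each window into a source phase (no correction; gain of regularity) followed by a control phase (smooth controllability on the now-regular residual), invoke the Ervedoza--Zuazua bound, and your argument coincides with the paper's.
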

		
		The proof of the above result can be found in subsection \ref{subsec:4.1}. The structure of the proof resembles the one of Theorem \ref{th_2}, with some technical differences due to the different nature of the control.
		
		\subsubsection{Controllability between trajectories}
		
		As in the internal control case, we suppose $c(x)>-\lambda_1$, where $\lambda_1$ is the first eigenvalue of the Dirichlet Laplacian in $\Omega$. Then, the generator of the free dynamics is skew-adjoint (see \cite[Proposition 3.7.6]{OCO}), thus generating an unitary group of operators $\left\{\mathbb{T}_{t}\right\}_{t\in\mathbb{R}}$ on $L^2(\Omega)\times H^{-1}(\Omega)$.

		Both the initial datum and final target  $(y_i^0,y_i^1)$  belong to a smooth trajectory, namely
		\begin{equation}\label{initial_final_elements_trajectories_boundary}
		(y_i^0,y_i^1)\in \left\{ (\overline{y}_i(\tau,\cdot),(\overline{y}_i)_t(\tau,\cdot)) \ | \ \tau\in \mathbb{R}\right\}.
		\end{equation}
		We assume the \textit{nonnegativity} of the controls $\overline{u}^i$ defining $(\overline{y}_i,(\overline{y}_i)_t)$, for $i=0,1$. Hereafter, in the context of boundary control, we take trajectories of class\\
		$C^{s(n)}(\mathbb{R};L^2(\Omega)\times H^{-1}(\Omega))$, with $s(n)=\lfloor{n/2}\rfloor+1$. We set  $(\overline{y}_0,(\overline{y}_0)_t)$ to be the initial trajectory and  $(\overline{y}_1,(\overline{y}_1)_t)$ be the target one.
		
		Note that, with respect to Theorem \ref{th_1}, we have relaxed the assumptions on the sign of the controls $\overline{u}^i$. Now, they are required to be only \textit{nonnegative} and not uniformly strictly positive.
		
		\begin{theorem}[Controllability between trajectories]\label{th_4}
			Assume $c(x)>-\lambda_1$, for any $x\in\overline{\Omega}$. Let  $(\overline{y}_i,(\overline{y}_i)_t)$  be solutions to \eqref{wave_boundary_1} with non-negative controls $\overline{u}^i$ respectively. Suppose the trajectories $(\overline{y}_i,(\overline{y}_i)_t)\in  C^{s(n)}([0,T];L^2(\Omega)\times H^{-1}(\Omega))$. Pick $(y_0^0,y_0^1)=(\overline{y}_0(\tau_0,\cdot),(\overline{y}_0)_t(\tau_0,\cdot))$ and $(y_1^0,y_1^1)=(\overline{y}_1(\tau_1,\cdot),(\overline{y}_1)_t(\tau_1,\cdot))$. Then, in time large, we can find a control $u\in L^{2}((0,T)\times \Gamma)$ such that
			\begin{itemize}
				\item the solution $(y,y_t)$ to \eqref{wave_boundary_1} with initial datum $(y_0^0,y_0^1)$ fulfills the final condition $(y(T,\cdot),y_t(T,\cdot))=(y_1^0,y_1^1)$;
				\item $u\geq 0$ a.e. in $(0,T)\times \Gamma$.
			\end{itemize}
		\end{theorem}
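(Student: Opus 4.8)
The plan is to mimic the "stair-case" argument used for Theorem \ref{th_1}, adapting it to the setting of trajectories rather than steady states and exploiting the conservativity hypothesis $c(x)>-\lambda_1$. First I would reduce to a convenient normalization: by time-translating the two reference trajectories we may assume $\tau_0=\tau_1=0$, so that the initial datum is $(\overline{y}_0(0,\cdot),(\overline{y}_0)_t(0,\cdot))$ and the target is $(\overline{y}_1(0,\cdot),(\overline{y}_1)_t(0,\cdot))$. The key structural observation is that, because the free group $\left\{\mathbb{T}_t\right\}_{t\in\mathbb{R}}$ is unitary on $L^2(\Omega)\times H^{-1}(\Omega)$, the two reference trajectories $(\overline{y}_i,(\overline{y}_i)_t)$, being solutions of the inhomogeneous problem with controls $\overline{u}^i\ge 0$, are \emph{globally bounded} in time in the energy space; moreover their $C^{s(n)}$-regularity in time is uniform on $\mathbb{R}$. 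This boundedness is exactly what replaces the uniform strict positivity $\overline{u}^i\ge\sigma$ of Theorem \ref{th_1}: it lets us travel along each reference trajectory for an arbitrarily long (but controlled) amount of time while staying in a fixed bounded set.

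Next I would set up the interpolation path. Define, for $\lambda\in[0,1]$, the controls $\overline{u}^\lambda := (1-\lambda)\,\overline{u}^0 + \lambda\,\overline{u}^1$, which are nonnegative for every $\lambda$ since each $\overline{u}^i\ge 0$; and let $(\overline{y}^\lambda,(\overline{y}^\lambda)_t)$ be the corresponding (time-dependent) trajectory solving \eqref{wave_boundary_1} with control $\overline{u}^\lambda$ and, say, initial datum the $\lambda$-convex combination of the two reference data at $\tau=0$. The map $\lambda\mapsto (\overline{y}^\lambda,(\overline{y}^\lambda)_t)$ is continuous (indeed affine) into $C^{s(n)}([0,T_0];L^2(\Omega)\times H^{-1}(\Omega))$ for any fixed horizon $T_0$, and at $\lambda=0,1$ it returns the reference trajectories. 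Now partition $[0,1]$ into $N$ subintervals by $\lambda_k=k/N$. On the $k$-th step the goal is to move, in some time $\delta>0$, from a point on the trajectory $(\overline{y}^{\lambda_{k-1}},\cdot)$ to a point on $(\overline{y}^{\lambda_k},\cdot)$. For this I invoke the \emph{unconstrained} controllability of \eqref{wave_boundary_1} with $L^\infty$ (indeed $L^2$) boundary controls — valid by the Geometric Control Condition on $(\Omega,\Gamma_0,T^*)$ — on a time interval of length, say, $2T^*+\delta$: there is a control $v_k$, small in $L^\infty$, steering the state from the endpoint of the previous step to the desired point of the next trajectory. The correction $v_k$ can be made as small as we like (in $L^\infty((0,2T^*+\delta)\times\Gamma)$) by choosing $N$ large and $\delta$ small, because the two reference trajectories stay a distance $O(1/N)$ apart uniformly in $t$ (here the global boundedness and uniform time-regularity of the trajectories are essential to apply the standard observability/HUM estimate with a constant independent of $k$).

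Then comes the positivity bookkeeping, which is the heart of the argument and the main obstacle. On the $k$-th step the total control is the sum of a "cruising" part — the reference control $\overline{u}^{\lambda_{k-1}}$ (or $\overline{u}^{\lambda_k}$) evaluated along a suitable time window — plus the small correction $v_k$. The cruising part is nonnegative but, unlike in Theorem \ref{th_1}, it need not be bounded away from zero, so a crude $L^\infty$ bound on $v_k$ does not immediately guarantee $\overline{u}^{\lambda}+v_k\ge 0$. The standard fix, as in \cite{pighin2017controllability}, is to insert between consecutive correction phases long "waiting" phases during which the control equals exactly the reference control $\overline{u}^{\lambda_k}\ge 0$ and the state simply rides the $\lambda_k$-trajectory; and to arrange each correction $v_k$ so that it is active only on a short sub-window where we can \emph{add} a positive constant (of size comparable to $\|v_k\|_\infty$) absorbed by moving slightly further along the trajectory — equivalently, one proves the $L^\infty$ correction can be taken to respect the one-sided bound by a compactness/open-mapping argument, exactly as in the cited heat-equation paper, using that the set of admissible corrections is an $L^\infty$-neighborhood of $0$ in the reachable set. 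Concatenating the $N$ steps, each of duration bounded by $C(T^*+\delta)$ plus a waiting time, produces a single control $u\ge 0$ on $(0,T)\times\Gamma$ with $T\le N\cdot C(T^*+\delta)+(\text{total waiting})$, finite, steering $(y_0^0,y_0^1)$ to $(y_1^0,y_1^1)$; the requirement "$T$ large enough" is precisely this accumulated time. Finally, since the construction at each step keeps the state within $O(1/N)$ of a reference trajectory that lives in a fixed bounded set, no blow-up occurs and the regularity hypotheses $(\overline{y}_i,(\overline{y}_i)_t)\in C^{s(n)}$ are used only to guarantee that the endpoints we interpolate between are regular enough for the HUM control $v_k$ to inherit the needed $L^\infty$ (rather than merely $L^2$) bounds — this is where $s(n)=\lfloor n/2\rfloor+1$ enters, via Sobolev embedding in the time variable.
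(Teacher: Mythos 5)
Your plan diverges from the paper's at exactly the point you yourself flag as ``the heart of the argument,'' and that is where it breaks down. You propose a stair-case along the interpolated trajectories driven by $\overline{u}^{\lambda}=(1-\lambda)\overline{u}^0+\lambda\overline{u}^1$, superimposing small $L^\infty$ corrections $v_k$. But $\overline{u}^{\lambda}$ is only nonnegative, not bounded below by a positive constant, so the nonnegative cone contains no $L^\infty$-neighborhood of $\overline{u}^{\lambda}$: an arbitrarily small correction can destroy the sign. Your proposed repairs do not close this. ``Adding a positive constant absorbed by moving slightly further along the trajectory'' has no analogue for the wave equation (shifting the target along the trajectory does not cancel a constant added to the boundary control), and the ``open-mapping/compactness'' device from the heat-equation papers is precisely what requires $\overline{u}\geq\sigma>0$ — which is why the paper's Theorem \ref{th_1} assumes uniform positivity while Theorem \ref{th_4} cannot be obtained by the same stair-case. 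The paper's actual proof (Theorem \ref{th_8}) uses a different mechanism: it \emph{deforms the reference controls}, replacing $\overline{u}^0$ near $t=0$ by the convex combination $\rho(t)\overline{u}^0(t+\tau_0)+(1-\rho(t))\sigma$ with $\sigma\in\mbox{int}^V(\mathscr{U}_{\mbox{\tiny{ad}}}\cap V)$ a fixed interior point (admissible by mere convexity of $\mathscr{U}_{\mbox{\tiny{ad}}}$), and symmetrically deforms the target control near $t=T$. During the long middle phase the nominal control is exactly the constant $\sigma$, which \emph{does} have room in the $V$-topology; a single correction $w_T$, produced by Lemma \ref{lemma_stair_case_6} with $\|w_T\|_{L^\infty(V)}$ below half the $V$-distance from $\sigma$ to the boundary of $\mathscr{U}_{\mbox{\tiny{ad}}}\cap V$, then keeps $\sigma+w_T$ admissible. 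There is no interpolation between the two trajectories at all.

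Two further points. First, before Lemma \ref{lemma_stair_case_6} can be applied, the state difference must be shown to lie in $D(A^s)$ with a bound independent of $T$; the paper obtains this from Lemma \ref{lemma_semigroup_4} via the decomposition $y(t)=\rho(t)(\overline{y}^0(t+\tau_0)-\psi)+\eta_2(t)$, where the source term $\rho'(\overline{y}^0-\psi)$ is $C^{s}$ in time and compactly supported — the $C^{s(n)}$ time-regularity of the trajectories is converted into spatial regularity through the equation, not through ``Sobolev embedding in the time variable.'' Second, the global-in-time boundedness of the reference trajectories that you extract from unitarity is not what is needed; only their behaviour on $[\tau_0,\tau_0+1]$ and $[\tau_1-1,\tau_1]$ enters, while the group/contractivity hypothesis $(H_3')$ is used to run the regularity lemma backwards in time for the deformed target trajectory and to propagate $D(A^s)$ bounds without growth in Lemma \ref{lemma_stair_case_6}.
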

		
		
		The above Theorem is proved in subsection \ref{subsec:4.2}. Furthermore, in section \ref{sec:5}, we show how Theorem \ref{th_4} applies in the one dimensional case, providing further information about the minimal time to control and the possibility of controlling the system  in the minimal time.
		
		\subsubsection{State constraints}
		
		We impose now constraints both on the control and on the state, namely both the \textit{control} and the \textit{state} are required to be nonnegative.
		
		In the parabolic case (see \cite{HCC} and \cite{pighin2017controllability}) one can employ the comparison principle to get a state constrained result from a control constrained one. But, now, as we have explained before, the comparison principle is not valid in general for the wave equation.
		And we cannot rely on comparison to deduce our state constrained result from the control constrained one.
		
		We shall rather apply the ``stair-case argument'' developed to prove steady state controllability, paying attention to the added need of preserving state constraints as well.
		
		
		Let $\lambda_1$ be the first eigenvalue of the Dirichlet Laplacian. We assume $c> -\lambda_1$ in $\overline{\Omega}$. We also suppose that $\chi\equiv 1$, meaning that the control acts on the whole boundary. We take as initial and final data two steady states $y_0^0$ and $y_1^0$ associated to controls $\overline{u}^i\geq \sigma>0$. Our proof relies on the application of the maximum principle to \eqref{steady_wave_boundary_1}. This ensures that the states $\overline{y}_i\geq \sigma$ once we know $\overline{~u}^i\geq \sigma$. For this reason, we need $c> -\lambda_1$ and $\chi\equiv 1$.
		
		Our strategy is the following
		\begin{itemize}
			\item employ the ``stair-case argument'' used to prove steady state controllability, to keep the control in a narrow tubular neighborhood of the segment connecting $\overline{u}^0$ and $\overline{u}^1$. This can be done by taking the time of control  large enough. Since $\overline{u}^i\geq \sigma>0$,  this guarantees the positivity of the control;
			\item by the continuous dependence of the solution on the data, the controlled trajectory remains also in a narrow neighborhood of the convex combination joining initial and final data. On the other hand, by the maximum principle for the steady problem \eqref{steady_wave_boundary_1}, we have that $y_i^0\geq \sigma$ in $\Omega$, for $i=0, 1$. In this way the state  $y$ can be assured to remain nonnegative.
		\end{itemize}
		
		\begin{theorem}\label{th_5}
			We assume $c(x)> -\lambda_1$ for any $x\in \overline{\Omega}$ and $\chi\equiv 1$. Let $y_0^0$ and $y_1^0$ be solutions to the steady problem
			\begin{equation}\label{steady_wave_boundary_2}
			\begin{cases}
			-\Delta y+cy=0\hspace{0.6 cm} & \mbox{in} \hspace{0.10 cm}\Omega\\
			y=\overline{u}^i,  & \mbox{on}\hspace{0.10 cm}\partial \Omega\\
			\end{cases}
			\end{equation}
			where $\overline{u}^i\geq \sigma$ a.e. on $\partial\Omega$, with $\sigma>0$. We assume $y_i^0\in H^{s(n)}(\Omega)$. Then, there exists $\overline{T}>0$ such that for any $T>\overline{T}$ there exists a control $u\in L^{\infty}((0,T)\times \partial\Omega)$ such that
			\begin{itemize}
				\item the unique solution $(y,y_t)$ to \eqref{wave_boundary_1} with initial datum $(y_0^0,0)$ and control $u$ is such that $(y(T,\cdot),y_t(T,\cdot))=(y_1^0,0)$;
				\item $u\geq 0$ a.e. on $(0,T)\times \partial\Omega$;
				\item $y\geq 0$ a.e. in $(0,T)\times \Omega$.
			\end{itemize}
		\end{theorem}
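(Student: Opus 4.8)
The plan is to run the stair-case argument of Theorem~\ref{th_1} (as in \cite{pighin2017controllability}) while simultaneously keeping the \emph{state} nonnegative, the additional ingredient being the maximum principle for the steady elliptic problem \eqref{steady_wave_boundary_2}.

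\emph{Step 1: positivity of the steady states along the segment.} Since $c>-\lambda_1$ on the compact set $\overline{\Omega}$, the principal eigenvalue of $-\Delta+cI$ on $H^1_0(\Omega)$ is strictly positive, so this operator obeys the weak and strong maximum principles. Let $\psi$ solve $-\Delta\psi+c\psi=0$ in $\Omega$, $\psi=1$ on $\partial\Omega$; then $\psi>0$ in $\Omega$ and, by elliptic regularity and continuity up to the boundary, $c_0:=\min_{\overline{\Omega}}\psi>0$. The difference $y_i^0-\sigma\psi$ is $c$-harmonic and nonnegative on $\partial\Omega$, hence $y_i^0\ge\sigma\psi\ge\rho:=\sigma c_0>0$ on $\overline{\Omega}$, for $i=0,1$. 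Because $\chi\equiv1$, the map $\overline u\mapsto\overline y$ defined by \eqref{steady_wave_boundary_2} is linear, so for $s\in[0,1]$ the steady state $y_s$ with boundary datum $\overline u_s:=(1-s)\overline u^0+s\overline u^1\ge\sigma$ is the convex combination $y_s=(1-s)y_0^0+s y_1^0$, whence $y_s\ge\rho>0$. Finally $H^{s(n)}(\Omega)\hookrightarrow C^0(\overline{\Omega})$, since $s(n)=\lfloor n/2\rfloor+1>n/2$; in particular all the $y_s$ are continuous on $\overline{\Omega}$, and so are their traces $\overline u_s$.

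\emph{Step 2: a quantitative stair-case.} Fix a control time $T_0>T^*$, so that unconstrained controllability of \eqref{wave_boundary_1} holds on $[0,T_0]$. The input we need beyond Theorem~\ref{th_1} is its smooth-data/smooth-control counterpart (in the spirit of \cite{ervedoza2010systematic}, adapted to Dirichlet boundary control): for $g\in H^{s(n)}(\Omega)$ there is a boundary control steering \eqref{wave_boundary_1} from $(0,0)$ to $(g,0)$ in time $T_0$, with solution $z\in C([0,T_0];H^{s(n)}(\Omega))\cap C^1([0,T_0];H^{s(n)-1}(\Omega))$ and $\|z\|_{C([0,T_0];H^{s(n)})}\le C\|g\|_{H^{s(n)}(\Omega)}$, the constant $C$ independent of $g$ (linearity of the control operator). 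Partition $[0,1]$ into $N$ equal subintervals with nodes $s_k=k/N$, put $t_k=kT_0$, and on $[t_k,t_{k+1}]$ look for the solution as $y=y_{s_k}+z_k$: here the time-independent $y_{s_k}$ is viewed as a solution of \eqref{wave_boundary_1} with the constant control $\overline u_{s_k}$ and datum $(y_{s_k},0)$, while $z_k$ is chosen, by the above, to steer \eqref{wave_boundary_1} from rest to $(y_{s_{k+1}}-y_{s_k},0)$ in time $T_0$. Since $y_{s_{k+1}}-y_{s_k}=\frac1N(y_1^0-y_0^0)$, writing $M:=\|y_1^0-y_0^0\|_{H^{s(n)}(\Omega)}$ we get $\|z_k\|_{C([t_k,t_{k+1}];H^{s(n)})}\le CM/N$; by the embedding of Step~1 and the trace embedding $H^{s(n)-1/2}(\partial\Omega)\hookrightarrow C^0(\partial\Omega)$ (valid since $s(n)-1/2>(n-1)/2$), both the $L^\infty((t_k,t_{k+1})\times\Omega)$ norm of $z_k$ and the $L^\infty((t_k,t_{k+1})\times\partial\Omega)$ norm of $z_k|_{\partial\Omega}$ are $\le C'M/N$. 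On this step the control is $u=\overline u_{s_k}+z_k|_{\partial\Omega}$, so $u\ge\sigma-C'M/N$, while the state is $y=y_{s_k}+z_k\ge\rho-C'M/N$. Choosing $N>C'M/\min(\sigma,\rho)$ makes both strictly positive. Concatenating the $N$ arcs — the state being continuous at each node, equal there to $y_{s_k}$ — drives $(y_0^0,0)$ to $(y_1^0,0)$ in time $\overline T:=NT_0$ with an $L^\infty$ control satisfying $u\ge0$ and $y\ge0$.

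\emph{Step 3: arbitrary large time, and the main obstacle.} For $T>\overline T$ prepend, on $[0,T-\overline T]$, the constant-in-time control $\overline u^0\in C^0(\partial\Omega)\subset L^\infty$, $\overline u^0\ge\sigma>0$: since $y_0^0$ solves the steady problem, the solution stays equal to $y_0^0\ge\rho>0$, and at time $T-\overline T$ we are at $(y_0^0,0)$, the starting point of the stair-case; this yields the claim for every $T>\overline T$. The genuinely delicate point is the quantitative ingredient of Step~2: to enforce the \emph{state} constraint $y\ge0$ \emph{pointwise} one must keep the controlled trajectory within distance $\rho$ of the arc of steady states in $L^\infty(\Omega)$, not merely in $L^2(\Omega)$ — an $L^2$-small correction need not be a.e.\ small — which forces one to carry out the controllability in the higher-order space $H^{s(n)}(\Omega)$, whence the hypothesis $y_i^0\in H^{s(n)}(\Omega)$ with $s(n)=\lfloor n/2\rfloor+1$ the least integer for which $H^{s(n)}(\Omega)\hookrightarrow C^0(\overline{\Omega})$, and to dispose of a \emph{bounded linear} control operator on that scale, so that an $O(1/N)$ perturbation of the data produces an $O(1/N)$ perturbation of the trajectory. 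Establishing this controllability with estimates (through \cite{ervedoza2010systematic} adapted to boundary control, or via the machinery already underlying Theorem~\ref{th_1}) and checking the compatibility conditions at the two time-endpoints of each step is the technical core; the maximum-principle bound of Step~1, the counting and the gluing are then routine.
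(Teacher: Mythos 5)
Your proposal follows essentially the same route as the paper: a quantitative stair-case along the segment of steady states, with the perturbations of state and control kept $L^\infty$-small via smooth controllability in $H^{s(n)}(\Omega)$ (the paper's Lemma \ref{lemma_boundary}, itself proved by the $\rho(t)$ ramp-down device of Lemma \ref{lemma_10} — exactly the compatibility issue you flag as the technical core), and the elliptic maximum principle supplying the uniform positive lower bound on the steady states so that an $O(1/N)$ perturbation cannot destroy nonnegativity. If anything, your Step 1 is the more careful version (the paper asserts $z_k\ge\sigma$ outright, whereas the honest bound is $\sigma\min_{\overline{\Omega}}\psi$ as you derive), and your Step 3, waiting at the initial steady state to cover every $T>\overline{T}$, is a harmless addition the paper leaves implicit.
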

		
		The proof of the above Theorem can be found in subsection \ref{subsec:4.3}.
		
		Note that the time needed to control the system keeping both the control and the state \textit{nonnegative} is greater (or equal) than the corresponding one with no constraints on the state.
		
		\subsection{Orientation}
		
		The rest of the paper is organized as follows:
		\begin{itemize}
			\item Section \ref{sec:2}: Abstract results;
			\item Section \ref{sec:3}: Internal Control: Proof of Theorem \ref{th_2} and Theorem \ref{th_3};
			\item Section \ref{sec:4}: Boundary control: Proof of Theorem \ref{th_1}, Theorem \ref{th_4} and Theorem \ref{th_5};
			\item Section \ref{sec:5}: The one dimensional case;
			\item Section \ref{sec:6}: Conclusion and open problems;
			\item Appendix.
		\end{itemize}
		
		\section{Abstract results}
		\label{sec:2}
		
		The goal of this section is to provide some results on constrained controllability for some abstract control systems. We apply these results in the context of internal control and boundary control of the wave equation (see Section \ref{sec:1}).
		
		We begin introducing the abstract control system. Let $H$ and $U$ be two Hilbert spaces
		endowed with norms $\|\cdot\|_{H}$ and $\|\cdot\|_{U}$ respectively. $H$ is called the {state} space and $U$ the {control} space. Let $A:D(A)\subset H\longrightarrow H$ be a generator of a ${C}_0$-semigroup $(\mathbb{T}_t)_{t\in\mathbb{R}^+}$, with $\mathbb{R}^+= [0,+\infty)$. The domain of the generator $D(A)$ is endowed with the graph norm $\|x\|_{D(A)}^2=\|x\|_{H}^2+\|Ax\|_{H}^2$. We define $H_{-1}$ as the completion of $H$ with respect to the norm $\|\cdot\|_{-1}=\|(\beta I-A)^{-1}(\cdot)\|_{H}$, with real $\beta$ such that
		$(\beta I-A)$ is invertible from $H$ to $H$ with continuous inverse. Adapting the techniques of \cite[Proposition 2.10.2]{OCO}, one can check that the definition of $H_{-1}$ is actually independent of the choice of $\beta$. By applying the techniques of \cite[Proposition 2.10.3]{OCO}, we deduce that $A$ admits a unique bounded extension $A$ from $H$ to $H_{-1}$. For simplicity, we still denote by $A$ the extension. Hereafter, we write $\mathscr{L}(E,F)$ for the space of all bounded linear operators from a Banach space $E$ to another Banach space $F$. 
		
		Our control system is governed by:
		\begin{equation}\label{state_equation_abstract}
		\begin{cases}
		\frac{d}{dt}y(t)=Ay(t)+Bu(t),& t\in(0,\infty),\\
		y(0)=y_0,\\
		\end{cases}
		\end{equation}
		where $y_0\in H$, $u\in L^2_{loc}([0,+\infty),U)$ is a control function and the {control} operator $B\in \mathscr{L}(U,H_{-1})$ satisfies the  {admissibility} condition in the following definition (see \cite[Definition 4.2.1]{OCO}).
		\begin{definition}\label{def_admiss}
			The control operator $B\in \mathscr{L}(U,H_{-1})$ is said to be admissible if for all  $\tau>0$ we have $\mbox{Range}(\Phi_{\tau})\subset H$, where
			$\Phi_{\tau}:L^2((0,+\infty);U)\to H_{-1}$ is defined by:
			\begin{equation*}
			\Phi_{\tau}u=\int_{0}^{\tau}\mathbb{T}_{\tau-r}Bu(r)dr.
			\end{equation*}
		\end{definition}
		From now on, we will always assume the control operator to be admissible. One can check that for any $y_0\in H$ and $u\in L^2_{loc}((0,+\infty);U)$ there exists a unique mild solution $y\in C^0([0,+\infty),H)$ to \eqref{state_equation_abstract} (see, for instance, \cite[Proposition 4.2.5]{OCO}). We denote by $y(\cdot;y_0,u)$ the unique solution to \eqref{state_equation_abstract} with initial datum $y_0$ and control $u$.
		
		Now, we introduce the following constrained controllability problem
		\begin{itemize}
			\item[]\textit{Let $\mathscr{U}_{\mbox{\tiny{ad}}}$ be a nonempty subset of $U$. Find a subset $E$ of $H$ so that for each\\
				$y_0, \,y_1 \in E$, there exists $T>0$ and a control $u\in L^{\infty}(0,T;U)$ with $u(t)\in \mathscr{U}_{\mbox{\tiny{ad}}}$ for a.e. $t\in (0,T)$, so that $y(T;y_0,u)=y_1$}.
		\end{itemize}
		We address this controllability problem in the next two subsections, under different assumptions on $\mathscr{U}_{\mbox{\tiny{ad}}}$ and $(A,B)$. In Subsection \ref{subsec:2.1}, we study the above controllability problem, where the initial and final data are steady states, i.e. solutions to the steady equation:
		\begin{equation}\label{steady_state_equation_abstract}
		Ay+Bu=0 \quad\mbox{for some}\quad u\in U.
		\end{equation}
		In Subsection \ref{subsec:2.2}, we take initial and final data on two different trajectories of \eqref{state_equation_abstract}.
		
		To study the above problem, we need two ingredients, which play a key role in the proofs of Subsection \ref{subsec:2.1} and Subsection \ref{subsec:2.2}. First, we introduce the notion of smooth controllability. Before introducing this concept, 
		%
		%
		%
		we fix  $s\in \mathbb{N}$ and a Hilbert space $V$ so that
		\begin{equation}\label{s and V}
		\quad V \hookrightarrow U,
		\end{equation}
		where $ \hookrightarrow$  denotes the continuous embedding. Note that all throughout the remainder of the section, $s$ and $V$ remain fixed. 
		
		The concept of smooth controllability is given in the following definition. The notation $y(\cdot;y_0,u)$ stands for the solution of the abstract controlled equation \eqref{state_equation_abstract} with control $u$ and initial data $y_0$.
		
		\begin{definition}\label{contr_2_abstract}
			The control system \eqref{state_equation_abstract} is said to be {smoothly controllable} in time $T_0>0$ if for any $y_0\in D(A^s)$, there exists a control function $v\in L^{\infty}((0,T_0);V)$ such that
			\begin{equation*}
			y(T_0;y_0,v)=0
			\end{equation*}
			and
			\begin{equation}\label{estimate_contr_2_abstract}
			\|v\|_{L^{\infty}((0,T_0);V)}\leq C\|y_0\|_{D(A^s)},
			\end{equation}
			the constant $C$ being independent of $y_0$.
		\end{definition}

		\begin{remark}
			(i) In other words, the system is smoothly controllable in time $T_0$ if for each (regular) initial datum $y_0\in D(A^s)$, there exists a $L^{\infty}$-control $u$ with
			values in the regular space $V$ steering our control system to rest at time $T_0$.
			
			(ii) The smooth controllability in time $T_0$ of system \eqref{state_equation_abstract} is a consequence of the following observability inequality:  there exists a constant $C> 0$ such that for any $z\in D(A^*)$
			\begin{equation*}
			\|\mathbb{T}_{T_0}^*z\|_{D(A^s)^*}\leq C\int_0^{T_0} \|i^{*}B^*\mathbb{T}_{T_0-t}^{*}z\|_{V^*}dt,
			\end{equation*}
			where $D(A^s)^*$ is the dual of $D(A^s)$ and $i:V \hookrightarrow U$ is the inclusion. This inequality, that can often be proved out of classical observability inequalities employing the regularizing properties of the system,  provides a way to prove the smooth controllability for system \eqref{state_equation_abstract}.  This occurs for parabolic problem enjoying smoothing properties.
			
			(iii)  Besides, for some systems $(A,B)$, even if they do not enjoy smoothing properties,  there is an alternative way to prove the aforementioned smooth controllability property exploiting the ellipticity properties of the control operator (see \cite{ervedoza2010systematic}).
			
			Under suitable assumptions, the wave system is smoothly controllable (see Lemma \ref{Lemma_smooth_contr_wave_int} and Lemma \ref{lemma_smooth_controllability_wave_boundary}).

		\end{remark}

		
		
		The second ingredient is following lemma, which concerns the regularity of the inhomogeneous problem.
		
		\begin{lemma}\label{lemma_semigroup_4}
			Fix $k\in\mathbb{N}$ and take $f\in H^{k}((0,T);H)$ such that
			\begin{equation}\label{lemma_semigroup_4_hypo_1}
			\begin{cases}
			\frac{d^j}{dt^j}f(0)=0,\quad&\forall \ j\in \left\{0,\dots,k \right\}\\
			f(t)=0,\quad&\mbox{a.e.} \ t\in (\tau,T),\\
			\end{cases}
			\end{equation}
			with $0<\tau<T$. Consider $y$ solution to the problem
			\begin{equation}\label{state_equation_2}
			\begin{cases}
			\frac{d}{dt}y=Ay+f& t\in (0,T)\\
			y(0)=0.\\
			\end{cases}
			\end{equation}
			Then, $y\in \cap_{j=0}^k C^j([\tau,T];D(A^{k-j}))$ and
			\begin{equation*}
			\sum_{j=0}^k\|y\|_{C^j([\tau,T];D(A^{k-j}))}\leq C\|f\|_{H^k((0,T);H)},
			\end{equation*}
			the constant $C$ depending only on $k$.
		\end{lemma}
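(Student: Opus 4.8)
\emph{Proof plan.}
We may assume $k\ge1$ (for $k=0$ the statement is the elementary bound $\|y\|_{C([\tau,T];H)}\le M\sqrt{T}\,\|f\|_{L^{2}((0,T);H)}$ obtained directly from the mild formula). The idea is first to upgrade the \emph{time} regularity of $y$ on all of $[0,T]$, exploiting the vanishing of the $t$-derivatives of $f$ at $0$, and then, on the subinterval $[\tau,T]$ where $f\equiv0$ and the equation is homogeneous, to convert this into $D(A^{k})$-regularity.

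\textbf{Step 1: time regularity.} Starting from $y(t)=\int_{0}^{t}\mathbb{T}_{t-s}f(s)\,ds$, I would use the classical rule that for $g\in W^{1,1}((0,T);H)$ the map $t\mapsto\int_{0}^{t}\mathbb{T}_{t-s}g(s)\,ds$ is $C^{1}$ on $[0,T]$ with derivative $\mathbb{T}_{t}g(0)+\int_{0}^{t}\mathbb{T}_{t-s}g'(s)\,ds$. Since $k\ge1$, each of $f,f',\dots,f^{(k-1)}$ lies in $W^{1,1}((0,T);H)$ and, by \eqref{lemma_semigroup_4_hypo_1}, vanishes at $t=0$, so the boundary term disappears at every stage; iterating $k$ times gives $y\in C^{k}([0,T];H)$ together with
\begin{equation*}
y^{(j)}(t)=\int_{0}^{t}\mathbb{T}_{t-s}f^{(j)}(s)\,ds,\qquad j=0,\dots,k,\ \ t\in[0,T]
\end{equation*}
(for $j=k$ this is merely a continuous $H$-valued convolution, $f^{(k)}\in L^{2}\subset L^{1}$). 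With $M:=\sup_{0\le t\le T}\|\mathbb{T}_{t}\|_{\mathscr{L}(H)}<\infty$ and Cauchy--Schwarz this also yields $\|y^{(j)}\|_{C([0,T];H)}\le M\sqrt{T}\,\|f\|_{H^{k}((0,T);H)}$ for $j=0,\dots,k$.

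\textbf{Step 2: from time to space on $[\tau,T]$.} By \eqref{lemma_semigroup_4_hypo_1}, $f\equiv0$ on $(\tau,T)$; as $f\in C^{k-1}([0,T];H)$, this forces $f^{(j)}\equiv0$ on $(\tau,T)$ for $j\le k-1$ (and $f^{(k)}=0$ a.e.\ there). Hence for $t\in[\tau,T]$ the integrands in Step 1 vanish on $(\tau,t)$, so
\begin{equation*}
y^{(j)}(t)=\int_{0}^{\tau}\mathbb{T}_{t-s}f^{(j)}(s)\,ds=\mathbb{T}_{t-\tau}\,y^{(j)}(\tau),\qquad j=0,\dots,k .
\end{equation*}
In particular $y(t)=\mathbb{T}_{t-\tau}y(\tau)$ on $[\tau,T]$: the restriction of $y$ to $[\tau,T]$ is the orbit of $(\mathbb{T}_{t})_{t\ge0}$ through $y(\tau)$, and by Step 1 this orbit is of class $C^{k}$. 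I would then invoke the standard fact (see e.g.\ \cite{OCO}) that $t\mapsto\mathbb{T}_{t}x$ is $C^{k}$ if and only if $x\in D(A^{k})$, in which case $\frac{d^{j}}{dt^{j}}\mathbb{T}_{t}x=A^{j}\mathbb{T}_{t}x=\mathbb{T}_{t}A^{j}x$. Applied with $x=y(\tau)$ this gives $y(\tau)\in D(A^{k})$, $A^{j}y(\tau)=y^{(j)}(\tau)$ for $j=0,\dots,k$, and $y^{(j)}(t)=\mathbb{T}_{t-\tau}A^{j}y(\tau)$ on $[\tau,T]$. Since $A^{j}y(\tau)\in D(A^{k-j})$ and $r\mapsto\mathbb{T}_{r}w$ is continuous from $[0,T-\tau]$ into $D(A^{k-j})$ for every $w\in D(A^{k-j})$ (apply strong continuity to $w$ and to $A^{k-j}w$), we obtain $y\in C^{j}([\tau,T];D(A^{k-j}))$ for $j=0,\dots,k$; and, for such $j$ and $t\in[\tau,T]$, using $A^{k-j}y^{(j)}(\tau)=A^{k}y(\tau)=y^{(k)}(\tau)$,
\begin{equation*}
\|y^{(j)}(t)\|_{D(A^{k-j})}^{2}=\|\mathbb{T}_{t-\tau}y^{(j)}(\tau)\|_{H}^{2}+\|\mathbb{T}_{t-\tau}y^{(k)}(\tau)\|_{H}^{2}\le M^{2}\bigl(\|y^{(j)}(\tau)\|_{H}^{2}+\|y^{(k)}(\tau)\|_{H}^{2}\bigr),
\end{equation*}
which by Step 1 is bounded by $C\|f\|_{H^{k}((0,T);H)}^{2}$. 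Summing over $j=0,\dots,k$ gives the announced estimate, the constant depending only on $k$ (and, through $M$ and $\sqrt{T}$, on the fixed semigroup and $T$).

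\textbf{Main obstacle.} The only genuinely delicate point is the rigorous justification in Step 1 that the convolution may be differentiated $k$ times with exactly the boundary contributions $\mathbb{T}_{t}f^{(j)}(0)$ appearing; this is where \eqref{lemma_semigroup_4_hypo_1} is used to make them vanish (only the values $j=0,\dots,k-1$ are strictly needed). An essentially equivalent organisation is by induction on $k$: since $f(0)=0$, $y'$ is the mild solution of $z'=Az+f'$ with $z(0)=0$, and $f'\in H^{k-1}((0,T);H)$ satisfies the hypotheses of the lemma with $k-1$ in place of $k$, so the inductive hypothesis governs $y'$ on $[\tau,T]$; the one missing conclusion, $y\in C^{0}([\tau,T];D(A^{k}))$, then follows from $Ay=y'-f=y'$ on $[\tau,T]$ (the mild solution being classical since $f\in W^{1,1}$ and $y(0)=0\in D(A)$).
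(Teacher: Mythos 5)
Your proposal is correct and follows essentially the same two-step strategy as the paper: first establish $y\in C^{k}([0,T];H)$ from the vanishing of the derivatives of $f$ at $t=0$ (the paper does this by induction via the antiderivative of the solution with source $f'$, you by directly differentiating the Duhamel formula — equivalent arguments, and you even note the inductive variant), then use that $y$ solves the homogeneous equation on $(\tau,T)$ to trade time derivatives for powers of $A$. You actually spell out the second step ($y(t)=\mathbb{T}_{t-\tau}y(\tau)$, $C^{k}$ orbit implies $y(\tau)\in D(A^{k})$) in more detail than the paper, which simply invokes classical semigroup theory.
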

		
		\begin{remark}
			Note that the maximal regularity of the solution is only assured for $t \ge \tau$, after the right hand side  term $f$ vanishes.
		\end{remark}
		
		The proof of this Lemma is given in an Appendix at the end of this paper.

		\subsection{Steady state controllability}
		\label{subsec:2.1}
		
		In this subsection, we study the constrained controllability for some steady states. Recall $s$ and $V$ are given by \eqref{s and V}. Before introducing our main result, we suppose:
		
		\hspace{0.03 cm}$(H_1)$ the system \eqref{state_equation_abstract} is smoothly controllable in time $T_0$ for some $T_0>0$.
		
		\hspace{0.03 cm}$(H_2)$ $\mathscr{U}_{\mbox{\tiny{ad}}}$ is a closed and convex cone with vertex at $0$ and $\mbox{int}^V(\mathscr{U}_{\mbox{\tiny{ad}}}\cap V)\neq \varnothing$,
		
		\hspace{0.80 cm}where $\mbox{int}^V$ denotes the interior
		set in the topology of $V$.\\
		Furthermore, we define the following subset
		\begin{equation}\label{def_W}
		\mathscr{W}=\mbox{int}^V(\mathscr{U}_{\mbox{\tiny{ad}}}\cap V)+\mathscr{U}_{\mbox{\tiny{ad}}}.
		\end{equation}
		(Note that, since $\mathscr{U}_{\mbox{\tiny{ad}}}$ is a convex cone, then $\mathscr{W}\subset \mathscr{U}_{\mbox{\tiny{ad}}}$.) The main result of this subsection is the following. The solution to \eqref{state_equation_abstract} with initial datum $y_0$ and control $u$ is denoted by $y(\cdot;y_0,u)$.
		
		\begin{theorem}[Steady state controllability]\label{th_6}
			Assume $(H_1)$ and $(H_2)$ hold. Let\\
			$\left\{(y_i,\overline{u}^i)\right\}_{i=0}^1\subset H\times \mathscr{W}$ satisfying
			\begin{equation*}
			Ay_i+B\overline{u}^i=0,\qquad i=0,1.
			\end{equation*}
			Then there exists $T>T_0$ and $u\in L^2(0,T;U)$ such that
			\begin{itemize}
				\item $
				u(t)\in \mathscr{U}_{\mbox{\tiny{ad}}}$ a.e. in $(0,T)$;
				\item $y(T;y_0,u)=y_1$.
			\end{itemize}
		\end{theorem}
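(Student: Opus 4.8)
The plan is to implement the ``stair-case argument'': since $\overline{u}^0$ and $\overline{u}^1$ both lie in $\mathscr{W} = \mbox{int}^V(\mathscr{U}_{\mbox{\tiny{ad}}}\cap V)+\mathscr{U}_{\mbox{\tiny{ad}}}$, the segment $\overline{u}^\theta := (1-\theta)\overline{u}^0 + \theta \overline{u}^1$, $\theta\in[0,1]$, stays inside $\mathscr{W}$ (because $\mathscr{U}_{\mbox{\tiny{ad}}}$ is a convex cone and $\mbox{int}^V(\mathscr{U}_{\mbox{\tiny{ad}}}\cap V)$ is convex), and moreover each $\overline{u}^\theta$ admits a fixed ``margin'': there is $\rho>0$ so that $\overline{u}^\theta + B_V(0,\rho)\subset \mathscr{U}_{\mbox{\tiny{ad}}}$ for all $\theta$, where $B_V$ is the $V$-ball. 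Let $y_\theta\in H$ be the corresponding steady state, $Ay_\theta + B\overline{u}^\theta = 0$; note $y_\theta$ depends affinely on $\theta$, hence is $C^\infty$ (in particular $C^{s}$) in $\theta$, and $y_\theta - y_{\theta'}$ stays in $D(A^s)$ with $\|y_\theta-y_{\theta'}\|_{D(A^s)}\le C|\theta-\theta'|$ — this uses that $y_1-y_0 = -(y_0 - y_1)$ solves $A(y_1-y_0) = -B(\overline{u}^1-\overline{u}^0)$, and one should check $y_1-y_0\in D(A^s)$; if the bare hypotheses only give $y_i\in H$, one instead argues that the controlled-trajectory construction below only ever needs to steer between nearby steady states and absorbs the regularity through the smooth controllability input applied to the \emph{difference}, which must itself be shown to lie in $D(A^s)$ (this is the one genuinely delicate point).

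Next I would discretize: fix a large integer $N$, set $\theta_j = j/N$, and on each subinterval $[(j-1)T_0/1, \dots]$ — more precisely on a time window of length $T_0 + 1$ — do the following. Hold the control equal to the constant $\overline{u}^{\theta_{j-1}}$ for a short time so the state, starting near $y_{\theta_{j-1}}$, stays put (it is exactly the steady state), then switch: on a window of length $T_0$ use linearity to write the target state as $y_{\theta_j} = y_{\theta_{j-1}} + (y_{\theta_j}-y_{\theta_{j-1}})$ and superpose the constant control $\overline{u}^{\theta_{j-1}}$ (which keeps $y_{\theta_{j-1}}$ fixed) with the control $v_j\in L^\infty((0,T_0);V)$ given by $(H_1)$ that steers $-(y_{\theta_j}-y_{\theta_{j-1}})$ to $0$ — equivalently steers the system from $y_{\theta_{j-1}}$ to $y_{\theta_j}$. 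By the smooth controllability estimate $\|v_j\|_{L^\infty((0,T_0);V)}\le C\|y_{\theta_j}-y_{\theta_{j-1}}\|_{D(A^s)}\le C'/N$. The total control on this window is $\overline{u}^{\theta_{j-1}} + v_j$ (with the constant extended by the appropriate affine interpolation of the two steady-state controls so that the endpoint bookkeeping is exact); choosing $N$ so large that $C'/N < \rho$ forces $\overline{u}^{\theta_{j-1}} + v_j \in \mathscr{U}_{\mbox{\tiny{ad}}}$ pointwise in time, using the margin from the first paragraph.

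Finally I would concatenate the $N$ pieces: the resulting control $u$ on $[0,T]$ with $T = N(T_0 + \text{buffer}) > T_0$ lies in $\mathscr{U}_{\mbox{\tiny{ad}}}$ a.e. (each piece does, and $\mathscr{U}_{\mbox{\tiny{ad}}}$ is a cone so positive combinations stay in it), lies in $L^2(0,T;U)$ since each piece does ($V\hookrightarrow U$ and $L^\infty\subset L^2$ on bounded intervals), and by construction $y(T;y_0,u) = y_{\theta_N} = y_1$. The main obstacle, as flagged, is the interface between the abstract regularity $y_i\in H$ assumed in the statement and the $D(A^s)$-regularity needed to invoke $(H_1)$: one must verify that the \emph{difference} of two steady states generated by controls in $V$ (not merely in $U$) lies in $D(A^s)$ with the linear bound in $\theta$ — this is where the embedding $V\hookrightarrow U$ and the elliptic/smoothing structure packaged into $(H_1)$ must be used carefully, and it is also why the controls defining the data are required to lie in $\mathscr{W}$ rather than merely in $\mathscr{U}_{\mbox{\tiny{ad}}}$.
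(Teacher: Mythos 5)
Your overall architecture --- convex interpolation of the steady pairs, a uniform $V$-margin along the segment, $N$ small steps each using $(H_1)$ plus superposition, and smallness of the corrective control forcing the constraint --- is exactly the paper's stair-case argument. But you have correctly flagged, and then not closed, the one step on which the whole proof hinges: the differences $y_{\theta_j}-y_{\theta_{j-1}}$ are a priori only in $H$, while $(H_1)$ requires data in $D(A^s)$ with an estimate in the $D(A^s)$-norm. Your proposed fix --- that the difference of two steady states generated by controls in $V$ should lie in $D(A^s)$ --- is not available under the stated hypotheses: the controls $\overline{u}^i$ lie in $\mathscr{W}=\mbox{int}^V(\mathscr{U}_{\mbox{\tiny{ad}}}\cap V)+\mathscr{U}_{\mbox{\tiny{ad}}}$, whose second summand is merely in $U$, so $\overline{u}^1-\overline{u}^0$ need not belong to $V$ at all; and even for $V$-valued controls, $A(y_1-y_0)=-B(\overline{u}^1-\overline{u}^0)\in H_{-1}$ gives no gain of regularity, since $B$ only maps into $H_{-1}$ and no ellipticity or invertibility of $A$ is assumed in $(H_1)$--$(H_2)$.

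The paper closes this gap dynamically rather than elliptically (Lemma \ref{lemma_10}): on each step one first spends a unit time interval applying the ramped control $\rho(t)(u_k-u_{k+1})+u_{k+1}$, where $\rho$ is smooth, $\rho(0)=1$ and $\rho\equiv 0$ near $t=1$. The solution of the difference system then decomposes as $\rho(t)(y_k-y_{k+1})+\eta_2(t)$, where $\eta_2$ solves an inhomogeneous problem whose source $-\rho'(t)(y_k-y_{k+1})$ is smooth in time, vanishes to all orders at $t=0$ and vanishes for $t\geq 1/2$; Lemma \ref{lemma_semigroup_4} then yields $\eta_2(1)\in D(A^s)$ with $\|\eta_2(1)\|_{D(A^s)}\leq C\|y_k-y_{k+1}\|_H$, and since $\rho(1)=0$ the state at time $1$ equals $\eta_2(1)$. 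Only then is $(H_1)$ invoked, and --- crucially for your choice of $N$ --- the resulting corrective control is bounded by $C\|y_k-y_{k+1}\|_H=C\|y_0-y_1\|_H/N$, i.e.\ by the $H$-norm of the step, not its $D(A^s)$-norm. Without this regularization phase (your ``hold the control constant for a short time'' does nothing of the sort, since the steady state just sits still) the argument does not go through. The constraint verification and the concatenation in your plan are otherwise sound, modulo the endpoint bookkeeping you already noted (one must superpose with $u_{k+1}$, not $u_k$, on the main window).
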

		
		\begin{remark}\label{remark_subsection_2.1_1}
			%
			As we shall see, in the application to the wave equation with positivity constraints:
			\begin{itemize}
				\item for \textit{internal control}, $U=L^2(\omega)$ and $V=H^{s(n)}(\omega)$, with $s=s(n)=\lfloor{n/2}\rfloor+1$;
				\item for \textit{boundary control}, $U=L^2(\Gamma)$ and $V=H^{s(n)-\frac12}(\Gamma)$, where $s(n)=\lfloor{n/2}\rfloor+1$.
			\end{itemize}
			$\mathscr{U}_{\mbox{\tiny{ad}}}$ is the set of nonnegative controls in $U$. In both cases, $\mathscr{W}$ is nonempty and contains controls $u$ in $L^2(\omega)$ (resp. $L^2(\Gamma)$) such that $u\geq \sigma$, for some $\sigma>0$. For this to happen, it is essential that $H^{s(n)}(\omega)\hookrightarrow C^0(\overline{\omega})$ (resp. $H^{s(n)-\frac12}(\Gamma)\hookrightarrow C^0(\overline{\Gamma})$). This is guaranteed by our special choice of $s=s(n)$. Furthermore, in these special cases:
			\begin{equation*}
			\overline{\mathscr{W}}^{U}=\mathscr{U}_{\mbox{\tiny{ad}}},
			\end{equation*}
			where $\overline{\mathscr{W}}^{U}$ is the closure of $\mathscr{W}$ in the space $U$.
		\end{remark}

		
		
		
		
		


		In the remainder of the present subsection we prove Theorem \ref{th_6}. The following Lemma is essential for the proof of Theorem \ref{th_6}. Fix $\rho\in C^{\infty}(\mathbb{R})$ such that \begin{equation}\label{lemma_10_hypo}
		\mbox{Range}(\rho)\subseteq [0,1],\quad\rho\equiv 1\quad \mbox{over} \ (-\infty,0] \quad\mbox{and}\quad \mbox{supp}(\rho)\subset \subset (-\infty,1/2).
		\end{equation}
		
		\begin{lemma}\label{lemma_10} 
			Assume that the system \eqref{state_equation_abstract} is smoothly controllable in time $T_0$, for some $T_0>0$. Let $(\eta_0,
			\overline{v}^0)\in H\times U$ be a steady state, i.e. solution to \eqref{steady_state_equation_abstract} with control $\overline{v}^0$. Then, there exists $w\in L^{\infty}((1,T_0+1);V)$ such that the control
			\begin{equation}\label{control_noreg_smooth}v(t)=
			\begin{cases}
			\rho(t)\overline{v}^0 \quad &\mbox{in} \ (0,1)\\
			w\quad &\mbox{in} \ (1,T_0+1)
			\end{cases}
			\end{equation}
			drives \eqref{state_equation_abstract} from $\eta_0$ to $0$ in time $T_0+1$.
			Furthermore,
			\begin{equation}\label{control_norm_estimate_1}
			\|w\|_{L^{\infty}((1,T_0+1);V)}\leq C\|\eta_0\|_{H}.
			\end{equation}
		\end{lemma}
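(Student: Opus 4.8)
The plan is to use the prescribed control $\rho(t)\overline{v}^0$ on $(0,1)$ and to reserve the interval $(1,T_0+1)$ for an application of smooth controllability; the whole difficulty is concentrated in showing that the state reached at time $1$,
\[
\eta(1):=y\bigl(1;\eta_0,\rho(\cdot)\overline{v}^0\bigr)=\mathbb{T}_1\eta_0+\int_0^1\mathbb{T}_{1-r}B\rho(r)\overline{v}^0\,dr ,
\]
is \emph{regular}, i.e. $\eta(1)\in D(A^s)$, with $\|\eta(1)\|_{D(A^s)}\le C\|\eta_0\|_H$. Once this is known, smooth controllability in time $T_0$ (Definition \ref{contr_2_abstract}) produces $\widetilde w\in L^{\infty}((0,T_0);V)$ with $y(T_0;\eta(1),\widetilde w)=0$ and $\|\widetilde w\|_{L^{\infty}((0,T_0);V)}\le C\|\eta(1)\|_{D(A^s)}\le C'\|\eta_0\|_H$; setting $w(t):=\widetilde w(t-1)$ on $(1,T_0+1)$ and defining $v$ by \eqref{control_noreg_smooth}, the autonomy of \eqref{state_equation_abstract} and uniqueness of mild solutions give $y(1;\eta_0,v)=\eta(1)$, $y(T_0+1;\eta_0,v)=0$, and $\|w\|_{L^{\infty}((1,T_0+1);V)}\le C'\|\eta_0\|_H$, which is \eqref{control_norm_estimate_1}.

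To identify $\eta(1)$, I would first exploit the steady-state relation. Since $(\eta_0,\overline{v}^0)$ solves \eqref{steady_state_equation_abstract}, the constant map $t\mapsto\eta_0$ is the mild solution of \eqref{state_equation_abstract} with the constant control $\overline{v}^0$, so $\int_0^t\mathbb{T}_{t-r}B\overline{v}^0\,dr=\eta_0-\mathbb{T}_t\eta_0$ for all $t\ge0$; moreover $B\overline{v}^0=-A\eta_0$ in $H_{-1}$. Writing $\rho=1-\phi$, with $\phi$ smooth, flat at $0$ and $\phi\equiv 1$ on $[1/2,1]$, and working in $H_{-1}$ (where $r\mapsto\mathbb{T}_{1-r}\eta_0$ is $C^1$ with derivative $-A\mathbb{T}_{1-r}\eta_0$), these two facts give
\[
\eta(1)=\eta_0+\int_0^1\phi(r)\,A\mathbb{T}_{1-r}\eta_0\,dr=\eta_0-\int_0^1\phi(r)\,\frac{d}{dr}\mathbb{T}_{1-r}\eta_0\,dr .
\]
Integrating by parts and using $\phi(0)=0$, $\phi(1)=1$, the boundary term cancels the free term $\eta_0$, leaving, after the change of variable $\sigma=1-r$,
\[
\eta(1)=\int_0^1\phi'(r)\,\mathbb{T}_{1-r}\eta_0\,dr=\int_0^{\infty} g(\sigma)\,\mathbb{T}_\sigma\eta_0\,d\sigma ,\qquad g(\sigma):=-\rho'(1-\sigma),
\]
where, by \eqref{lemma_10_hypo}, $g\in C^{\infty}_c((0,\infty))$ (it is smooth, flat at the endpoints of its support, and $\mbox{supp}\,g\subset(1/2,1]$).

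The point is that $\eta(1)$ is now a mollified average of the free flow applied to $\eta_0$, and such averages are automatically smooth, with no smoothing assumption on $\mathbb{T}_t$: for $x\in H$ and $g\in C^{\infty}_c((0,\infty))$, the usual difference-quotient computation shows $\int_0^{\infty}g(\sigma)\mathbb{T}_\sigma x\,d\sigma\in\bigcap_{k}D(A^k)$ with $A^k\!\int_0^{\infty}g(\sigma)\mathbb{T}_\sigma x\,d\sigma=(-1)^k\!\int_0^{\infty}g^{(k)}(\sigma)\mathbb{T}_\sigma x\,d\sigma$ for every $k$ (cf. \cite[Ch.~2]{OCO}). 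Taking $x=\eta_0$, $k\le s$, and using that $\|\mathbb{T}_\sigma\|$ is bounded on $\mbox{supp}\,g$, we obtain $\eta(1)\in D(A^s)$ with $\|\eta(1)\|_{D(A^s)}\le C\|\eta_0\|_H$, the constant depending only on $\rho$ and on the semigroup. This completes the reduction made in the first paragraph.

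The only genuinely non-routine step is the identification of $\eta(1)$ in the second paragraph: it relies on the steady-state relation $B\overline{v}^0=-A\eta_0$ to recombine the Duhamel term produced by the merely $U$-valued (hence possibly non-smooth) control $\rho(t)\overline{v}^0$ together with the free evolution $\mathbb{T}_1\eta_0$ into the mollified average $\int_0^{\infty}g(\sigma)\mathbb{T}_\sigma\eta_0\,d\sigma$. Everything else --- making the integration by parts rigorous in $H_{-1}$, checking that the boundary terms vanish (which is exactly why $\rho$ is required flat at $0$ and compactly supported before $1/2$ in \eqref{lemma_10_hypo}), the elementary smoothing estimate, and the time translation in the second stage --- is standard bookkeeping.
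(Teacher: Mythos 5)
Your proof is correct, and its overall architecture is exactly that of the paper: use the damped steady-state control $\rho(t)\overline{v}^0$ on $(0,1)$ to land in $D(A^s)$ with an $H$-norm bound, then invoke smooth controllability on $(1,T_0+1)$ and translate in time. The only place where you diverge is in how the regularity of $y(1;\eta_0,\rho\overline{v}^0)$ is justified. The paper subtracts $\rho(t)\eta_0$ from the trajectory, observes that the difference solves the inhomogeneous problem \eqref{state_equation_2} with right-hand side $\pm\rho'(t)\eta_0$ (smooth in time, vanishing near $t=0$ and for $t$ near $1$), and quotes Lemma \ref{lemma_semigroup_4}. You instead unwind the Duhamel formula using $B\overline{v}^0=-A\eta_0$ and an integration by parts in $H_{-1}$ to arrive at the explicit representation $y(1)=\int_0^\infty g(\sigma)\,\mathbb{T}_\sigma\eta_0\,d\sigma$ with $g\in C_c^\infty((1/2,1))$, and then use the classical fact that such mollified semigroup averages lie in $\bigcap_k D(A^k)$ with $A^k$ acting by $(-1)^k$ times differentiation of the weight. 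These are two presentations of the same identity (the paper's route, via Duhamel applied to \eqref{state_equation_2}, yields precisely your integral $-\int_0^1\mathbb{T}_{1-r}\rho'(r)\eta_0\,dr$), so nothing is gained or lost mathematically; your version is self-contained and makes the smoothing mechanism completely explicit, while the paper's version factors the computation through a reusable lemma that it also needs elsewhere (e.g. in the proof of Theorem \ref{th_8}). Your care with the boundary terms ($\rho$ flat at $0$, supported below $1/2$) and with carrying out the manipulations in $H_{-1}$ before re-identifying the result as an $H$-valued integral is exactly what is needed to make the argument rigorous for an unbounded admissible $B$.
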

		The proof of the above Lemma can be found in the Appendix.

		We prove now Theorem \ref{th_6}, by developing a ``stair-case argument'' (see Figure \ref{Stepwise procedure}).
		\begin{figure}[htp]
			\begin{center}
				\includegraphics[width=10cm]{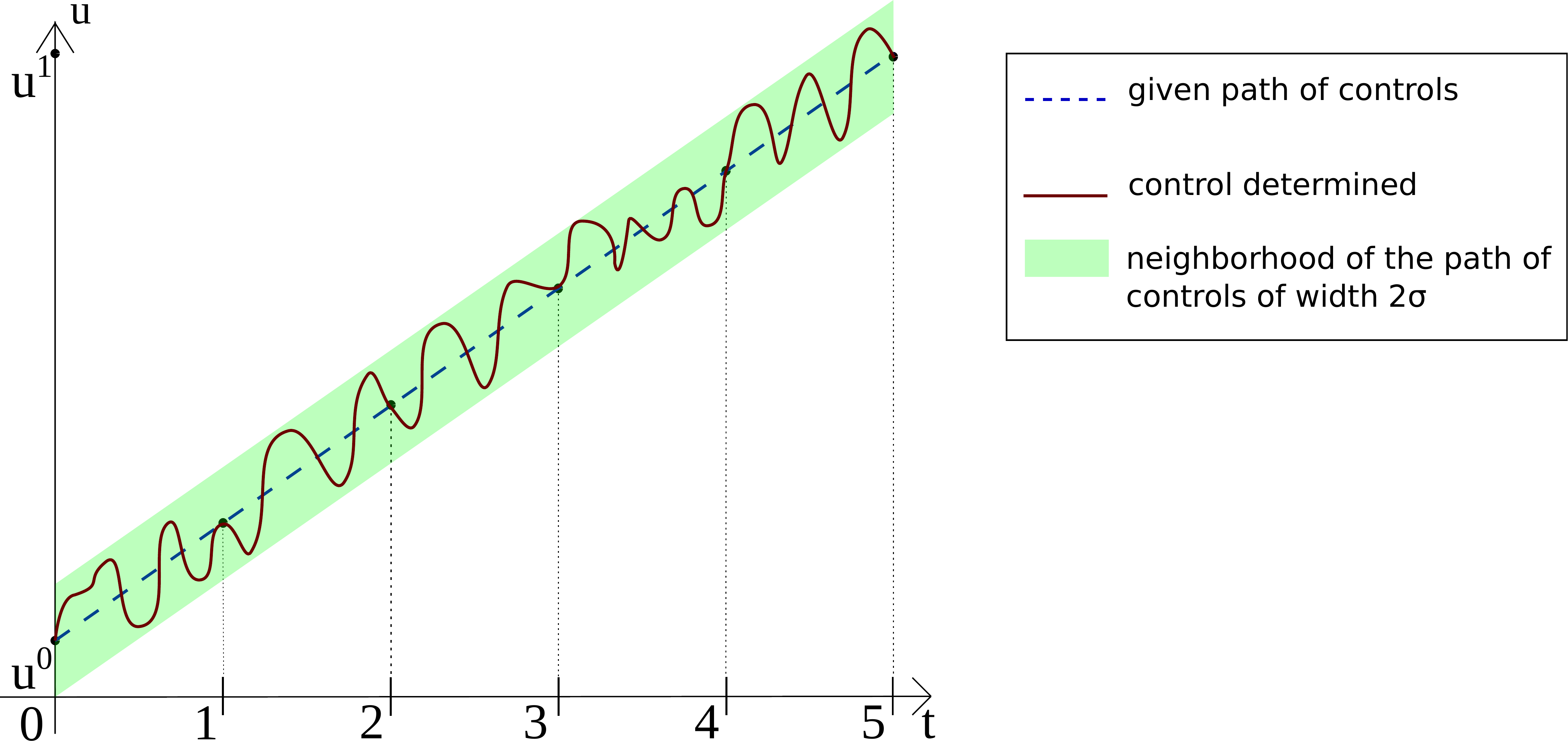}\\
				\caption{Stepwise procedure}\label{Stepwise procedure}
			\end{center}
		\end{figure}
		
		\begin{proof}[Proof of Theorem \ref{th_6}]

			Let $\left\{(y_i,\overline{u}^i)\right\}_{i=0}^1$ satisfy
			\begin{equation}\label{th_6_eq_1}
			Ay_i+B\overline{u}^i=0 \quad \forall \ i\in \left\{0,1\right\}.
			\end{equation}
			By the definition of $\mathscr{W}$, there exists $\left\{(q^i,z^i)\right\}_{i=0}^1\subset \mbox{int}^V(\mathscr{U}_{\mbox{\tiny{ad}}}\cap V)\times \mathscr{U}_{\mbox{\tiny{ad}}}$ such that
			\begin{equation}\label{th_6_eq_2}
			\overline{u}^i=q^i+z^i \quad i=0,1.
			\end{equation}
			Define the segment joining $y_0$ and $y_1$
			\begin{equation*}
			\gamma(s)=(1-s) y_0+s y_1\quad \forall \ s\in [0,1].
			\end{equation*}
			For each $s\in [0,1]$, $\gamma(s)$ solves
			\begin{equation*}
			A\gamma(s)+B(q(s)+z(s))=0 \quad \forall \ i\in \left\{0,1\right\}.
			\end{equation*}
			where $(q(s),z(s))\in \mbox{int}^V(\mathscr{U}_{\mbox{\tiny{ad}}}\cap V)\times \mathscr{U}_{\mbox{\tiny{ad}}}$ are defined by:
			\begin{equation*}
			q(s)=(1-s)q^0+sq^1\quad\mbox{and}\quad 	z(s)=(1-s)z^0+sz^1\quad \forall \ s\in [0,1].
			\end{equation*}
			The rest of the proof is divided into two steps.\\
			\textit{Step 1} \ \textbf{Show that there exists $\delta>0$, such that for each $s\in [0,1]$, $q(s)+B^V(0,\delta)\subset \mbox{int}^V(\mathscr{U}_{\mbox{\tiny{ad}}}\cap V)$, where $B^V(0,\delta)$ denotes the closed ball in $V$, centered at $0$ and of radius $\delta$.}\\
			
			Define
			\begin{equation}\label{th_6_eq_24}
			f(s)=\inf_{y\in V\setminus \mbox{int}^V(\mathscr{U}_{\mbox{\tiny{ad}}}\cap V)}\|q(s)-y\|_V,\quad s\in [0,1].
			\end{equation}
			One can check that $f$ is Lipschitz continuous over the compact interval $[0,1]$. Then, by Weierstrass' Theorem, we have that
			\begin{equation*}
			\min_{s\in [0,1]}f(s)>0.
			\end{equation*}
			Choose $0<\delta< \min_{s\in [0,1]}f(s)$. Hence, by \eqref{th_6_eq_24}, it follows that, for each $s\in [0,1]$,
			\begin{equation*}
			q(s)+B^V(0,\delta)\subset \mbox{int}^V(\mathscr{U}_{\mbox{\tiny{ad}}}\cap V),
			\end{equation*}
			as required.\\
			\textit{Step 2} \ \textbf{Conclusion.}\\
			Let $C>0$ be given by Lemma \ref{lemma_10}. Let $\delta >0$ be given by Step $1$. Choose $N_0\in\mathbb{N}\setminus \left\{0\right\}$ such that
			\begin{equation}\label{th_6_eq_3}
			N_0>\frac{2C\|y_0-y_1\|_H}{\delta}.
			\end{equation}
			For each $k\in \left\{0,\dots,N_0\right\}$, define:
			\begin{equation}\label{th_6_eq_16}
			y_k=\left(1-\frac{k}{N_0}\right)y_0+\frac{k}{N_0}y_1\quad\mbox{and}\quad 		u_k=\left(1-\frac{k}{N_0}\right)\overline{u}^0+\frac{k}{N_0}\overline{u}^1.
			\end{equation}
			It is clear that, by \eqref{th_6_eq_2}, for each $k\in \left\{0,\dots,N_0-1\right\}$,
			\begin{equation}\label{th_6_eq_16_bis}
			\|y_k-y_{k+1}\|_H=\frac{1}{N_0}\|y_0-y_1\|_H\quad\mbox{and}\quad u_k-q\left(\frac{k}{N_0}\right)\in \mathscr{U}_{\mbox{\tiny{ad}}}.
			\end{equation}
			Arbitrarily fix $k\in \left\{0,\dots, N_0-1\right\}$. Take $\eta_0=y_k-y_{k+1}$ and $\overline{v}^0=u_k-u_{k+1}$. Then, we apply Lemma \ref{lemma_10},  getting a control $w_k\in L^{\infty}(1,T_0+1;V)$ such that
			\begin{equation}\label{th_6_eq_17}
			y(T_0+1;y_k-y_{k+1},\hat{v}_k)=0
			\end{equation}
			and
			\begin{equation}\label{th_6_eq18}
			\|w_k\|_{L^{\infty}(1,T_0+1;V)}\leq C\|y_k-y_{k+1}\|_H,
			\end{equation}
			where
			\begin{equation}\label{th_6_eq20}\hat{v}_k(t)=
			\begin{cases}
			\rho(t)(u_k-u_{k+1}) \quad &t\in  (0,1]\\
			w_k(t)\quad &t\in (1,T_0+1).
			\end{cases}
			\end{equation}
			Define
			\begin{equation}\label{th_6_eq22}
			v_k(t)=		\begin{cases}
			\rho(t)(u_k-u_{k+1})+u_{k+1} \quad &t\in (0,1]\\
			w_k(t)+u_{k+1}\quad &t\in (1,T_0+1).
			\end{cases}
			\end{equation}
			At the same time, by \eqref{th_6_eq_1} and \eqref{th_6_eq_16}, we have
			\begin{equation*}
			Ay^{k+1}+Bu_{k+1}=0\quad \mbox{and}\quad y(T_0+1;y_{k+1},u_{k+1})=y_{k+1}.
			\end{equation*}
			The above, together with \eqref{th_6_eq_17}, \eqref{th_6_eq20} and \eqref{th_6_eq22}, yields
			\begin{eqnarray}\label{th_6_eq_29}
			y(T_0+1;y_k,v_k)&=&y(T_0+1;y_k-y_{k+1},\hat{v}_k)+y(T_0+1;y_{k+1},u_{k+1})\nonumber\\
			&=&y_{k+1}.
			\end{eqnarray}
			
			Next, we claim that
			\begin{equation}\label{th_6_eq_30}
			v_k(t)\in \mathscr{U}_{\mbox{\tiny{ad}}}\quad \mbox{for a.e.} \ t\in (0,T_0+1).
			\end{equation}
			To this end, by \eqref{def_W} and since $\mathscr{U}_{\mbox{\tiny{ad}}}$ is a convex cone, we have
			\begin{equation}\label{th_6_eq24}
			\mathscr{W}\hspace{0.3 cm}\mbox{is convex and}\hspace{0.3 cm}\mathscr{W}\subset \mathscr{U}_{\mbox{\tiny{ad}}}.
			\end{equation}
			By \eqref{lemma_10_hypo}, $0\leq \rho(t)\leq 1$ for all $t\in \mathbb{R}$. Then, by \eqref{th_6_eq22} an \eqref{th_6_eq24}, it follows that, for a.e $t\in (0,1)$,
			\begin{equation*}
			v_k(t)=\rho(t)u_k+(1-\rho(t))u_{k+1}\in \rho(t)\mathscr{W}+(1-\rho(t))\mathscr{W}\subset \mathscr{W}\subset \mathscr{U}_{\mbox{\tiny{ad}}}.
			\end{equation*}
			At this stage, to show \eqref{th_6_eq_30}, it remains to prove that
			\begin{equation}\label{th_6_eq_36}
			v_k(t)\in \mathscr{U}_{\mbox{\tiny{ad}}}\quad \mbox{for a.e.} \ t\in (1,T_0+1).
			\end{equation}
			Take $t\in (1,T_0+1)$. By \eqref{th_6_eq18}, \eqref{th_6_eq_16_bis} and \eqref{th_6_eq_3}, we have
			\begin{equation*}
			\|w_k(t)\|_V\leq \frac{C}{N_0}\|y_0-y_1\|_H\leq \delta/2.
			\end{equation*}
			From this and Step 1, it follows
			\begin{equation*}
			w_k(t)+q\left(\frac{k+1}{N_0}\right)\in \mbox{int}^V(\mathscr{U}_{\mbox{\tiny{ad}}}\cap V).
			\end{equation*}
			By this, \eqref{th_6_eq_16_bis}, \eqref{th_6_eq22} and \eqref{def_W}, we get, for a.e. $t$ in $(1,T_0+1)$,
			\begin{eqnarray}
			v_k(t)&=&w_k(t)+u_{k+1}\nonumber\\
			&=&w_k(t)+q\left(\frac{k+1}{N_0}\right)+\left(u^{k+1}-q\left(\frac{k+1}{N_0}\right)\right)\nonumber\\
			&\in & \mbox{int}^V(\mathscr{U}_{\mbox{\tiny{ad}}}\cap V)+\mathscr{U}_{\mbox{\tiny{ad}}}\nonumber\\
			&=&\mathscr{W}.\nonumber
			\end{eqnarray}
			From this and \eqref{th_6_eq24}, we are led to \eqref{th_6_eq_36}. Therefore, the claim \eqref{th_6_eq_30} is true.
			
			Finally, define
			\begin{equation*}
			u(t)=v_k(t-k(T_0+1)),\hspace{0.60 cm} \forall \ t\in [k(T_0+1),(k+1)(T_0+1)), \hspace{0.60 cm} k\in \left\{0,\dots,N_0-1\right\}.
			\end{equation*}
			Then, from \eqref{th_6_eq_29} and \eqref{th_6_eq_30}, the conclusion of this theorem follows.
			\qed
		\end{proof}
		
		In subsections \ref{subsec:3.1} and \ref{subsec:4.1}, we apply the above Theorem to prove Theorem \ref{th_2} and Theorem \ref{th_1} respectively. In particular, 
		\begin{itemize}
			\item for internal control,
			\begin{equation*}
			\mathscr{U}_{\mbox{\tiny{ad}}}=\left\{u\in L^2(\omega) \ | \ u\geq 0, \ \mbox{a.e.} \ {\omega}\right\};
			\end{equation*}
			\item for boundary control,
			\begin{equation*}
			\mathscr{U}_{\mbox{\tiny{ad}}}=\left\{u\in L^2(\Gamma) \ | \ u\geq 0, \ \mbox{a.e.} \ {\Gamma}\right\}.
			\end{equation*}
		\end{itemize}
		Then, in both cases, $\mathscr{U}_{\mbox{\tiny{ad}}}$ is closed convex cone with vertex at $0$.
		
		Nevertheless, the above techniques can be adapted in a wide variety of contexts.

		\subsection{Controllability between trajectories}
		\label{subsec:2.2}

		In this subsection, we study the constrained controllability for some general states lying on trajectories of the system with possibly nonzero time derivative. Recall $s$ and $V$ are given by \eqref{s and V}. Before introducing our main result, we assume:
		
		\hspace{0.03 cm}$(H_1^{\prime})$ the system \eqref{state_equation_abstract} is smoothly controllable in time $T_0$ for some $T_0>0$.
		
		\hspace{0.03 cm}$(H_2^{\prime})$ the set $\mathscr{U}_{\mbox{\tiny{ad}}}$ is a closed and convex and $\mbox{int}^V(\mathscr{U}_{\mbox{\tiny{ad}}}\cap V)\neq \varnothing$, where $\mbox{int}^V$ denotes
		
		\hspace{0.80 cm}the interior set in the topology of $V$;
		
		\hspace{0.03 cm}$(H_3^{\prime})$ the operator $A$ generates a $C_0$-group $\left\{\mathbb{T}_t\right\}_{t\in\mathbb{R}}$ over $H$ and $\|\mathbb{T}_t\|_{\mathscr{L}(H,H)}=1$ for all $t\in\mathbb{R}$. Furthermore, $A$ is invertible from $D(A)$ to $H$, with continuous inverse.
		
		The main result of this subsection is the following. The notation $y(\cdot;y_0,u)$ stands for the solution of the abstract controlled equation \eqref{state_equation_abstract} with control $u$ and initial data $y_0$.

		\begin{theorem}\label{th_8}
			Assume $(H_1^{\prime})$, $(H_2^{\prime})$ and $(H_3^{\prime})$ hold. Let $\overline{y}_i\in C^s(\mathbb{R};H)$ be solutions to \eqref{state_equation_abstract} with controls $\overline{u}^i\in L^2_{loc}(\mathbb{R};U)$ for $i=0,1$. Assume $\overline{u}^i(t)\in \mathscr{U}_{\mbox{\tiny{ad}}}$ for a.e. $t\in \mathbb{R}$. Let $\tau_0$, $\tau_1\in \mathbb{R}$. Then, there exists $T>0$ and $u\in L^2(0,T;U)$ such that
			\begin{itemize}
				\item $y(T;\overline{y}_0(\tau_0),u)=\overline{y}_1(\tau_1)$;
				\item $u(t)\in \mathscr{U}_{\mbox{\tiny{ad}}}$ for a.e. $t\in (0,T)$.
			\end{itemize}
		\end{theorem}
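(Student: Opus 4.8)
The plan is to reduce the trajectory-to-trajectory problem to the steady-state situation already settled in Theorem \ref{th_6}, using a time-shifted version of the stair-case argument. First I would observe that, because $A$ generates a unitary $C_0$-group and is boundedly invertible by $(H_3^{\prime})$, the decomposition $\overline{y}_i(\tau) = \zeta_i + \psi_i(\tau)$ makes sense, where $\zeta_i := -A^{-1}B\overline{u}^i(\tau_i)$ would be the "frozen" steady state — but since $\overline{u}^i$ is merely $L^2_{loc}$, I instead keep the whole trajectory. The key structural remark is that the difference of two solutions of \eqref{state_equation_abstract} is an \emph{uncontrolled} solution, hence propagated by the isometry group: if $y$ solves the controlled equation with control $u$ and $\overline{y}_1$ solves it with $\overline{u}^1$, then $z := y - \overline{y}_1$ solves $\dot z = Az + B(u-\overline{u}^1)$. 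So steering $\overline{y}_0(\tau_0)$ to $\overline{y}_1(\tau_1)$ in time $T$ is equivalent to steering $z(0) = \overline{y}_0(\tau_0) - \overline{y}_1(\tau_1 - T)$ to $z(T) = 0$ with control $u - \overline{u}^1(\tau_1 - T + \cdot)$, subject to the constraint $u \in \mathscr{U}_{\mbox{\tiny ad}}$, i.e. $u - \overline{u}^1(\cdot) \in \mathscr{U}_{\mbox{\tiny ad}} - \overline{u}^1(\cdot)$.

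Next I would set up the staircase along the segment $s \mapsto \gamma(s)$ in $H$ joining $\overline{y}_0(\tau_0)$ and $\overline{y}_1(\tau_1)$, but now each node must be a genuine point of one of the two trajectories so that Lemma \ref{lemma_10}-type estimates can be applied around it. Concretely, I would choose a large integer $N$ and a large time-step $T_0 + c$, and interpolate: for $k = 0,\dots,N$ move from $\overline{y}_0$ (at an appropriate time parameter) toward $\overline{y}_1$ by transferring, at step $k$, the small "difference datum" $\eta_k := \overline{y}_0(\sigma_k) - \overline{y}_1(\sigma_k')$ scaled by $1/N$, using the free group $\mathbb{T}_t$ to transport the remainder of the current trajectory exactly (this is where $(H_3^{\prime})$'s isometry is used — nothing grows), and superposing a small-amplitude $L^\infty(V)$ correction furnished by smooth controllability $(H_1^{\prime})$ via the Lemma \ref{lemma_10} mechanism. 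Because $\overline{y}_i \in C^s(\mathbb{R};H)$ and $D(A^s)$-valued estimates are what Lemma \ref{lemma_semigroup_4} and Definition \ref{contr_2_abstract} deliver, the corrections at each node have norm $O(\|\eta_k\|/N) = O(1/N)$ in $V$; choosing $N$ large forces each corrected control to stay in the prescribed $V$-neighborhood of the convex-combination control $(1-s)\overline{u}^0 + s\overline{u}^1$, which lies in the convex set $\mathscr{U}_{\mbox{\tiny ad}}$ by $(H_2^{\prime})$. Summing the constraint-preserving pieces as in the proof of Theorem \ref{th_6} yields the desired admissible control on $[0,T]$ with $T = N(T_0 + c)$.

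The main obstacle I anticipate is precisely the interface between the nonzero velocities and the constraint: unlike in Theorem \ref{th_6}, the "reference" control $\overline{u}^i(t)$ is time-dependent and only $L^2_{loc}$, so I cannot, as there, write each intermediate control as a fixed convex combination living in $\mathscr{W}$. The fix will require that the \emph{free} evolution of the difference of trajectories stays bounded (here $(H_3^{\prime})$, $\|\mathbb{T}_t\|=1$, is essential — this is the abstract counterpart of energy conservation and coercivity invoked before Theorem \ref{th_3}) and that the smooth-controllability corrections be added only on the "clean" subintervals where, thanks to the cutoff $\rho$ of \eqref{lemma_10_hypo} and Lemma \ref{lemma_semigroup_4}, the transferred datum is $D(A^s)$-regular. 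Keeping the bookkeeping of which trajectory-parameter $\sigma_k$ is attached to node $k$, and checking that consecutive nodes differ by $O(1/N)$ in $H$ uniformly (using the $C^s \subset C^0$ boundedness of $\overline{y}_i$ on the relevant compact time window), is the technical heart; once that is in place, the constraint verification is identical in spirit to the computation $v_k(t) = \rho(t)u_k + (1-\rho(t))u_{k+1}$ carried out in the proof of Theorem \ref{th_6}.
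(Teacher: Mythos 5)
There is a genuine gap at the constraint-verification step, and it is exactly the point the theorem is designed to overcome. You propose to keep each corrected control in a small $V$-neighborhood of the convex combination $(1-s)\overline{u}^0+s\overline{u}^1$ and to conclude admissibility because that combination lies in the convex set $\mathscr{U}_{\mbox{\tiny{ad}}}$. But membership in a neighborhood of a point of a convex set gives nothing unless that point is \emph{interior}, and under the hypotheses of Theorem \ref{th_8} the reference controls are only assumed to satisfy $\overline{u}^i(t)\in\mathscr{U}_{\mbox{\tiny{ad}}}$ — they may sit on the boundary. The extreme case $\overline{u}^0\equiv\overline{u}^1\equiv 0$ (free trajectories, or the zero target) is explicitly allowed: then every convex combination is $0$, the vertex of the cone, and the small corrections furnished by smooth controllability have no sign, so $w_k+0=w_k$ need not be admissible no matter how large you take $N$. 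Your stated ``fix'' (boundedness of the free evolution via $\|\mathbb{T}_t\|=1$, and adding corrections only on intervals where the transferred datum is $D(A^s)$-regular) addresses regularity and growth, not this sign problem, so the obstacle you correctly identified in your last paragraph is never actually resolved.

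The missing idea in the paper's proof is to manufacture interior room where the hypotheses do not provide it: fix $\sigma\in \mbox{int}^V(\mathscr{U}_{\mbox{\tiny{ad}}}\cap V)$ (nonempty by $(H_2^{\prime})$) and deform \emph{both} trajectories, by the cutoffs $\rho$ and $\zeta$, into the auxiliary trajectory driven by the constant control $\sigma$ — i.e.\ replace $\overline{u}^0(\cdot+\tau_0)$ by $\hat{u}^0=\rho\,\overline{u}^0(\cdot+\tau_0)+(1-\rho)\sigma$ near $t=0$ and $\overline{u}^1$ by the analogous $\hat{u}^1_T$ near $t=T$. During the short transition intervals admissibility follows from convexity of $\mathscr{U}_{\mbox{\tiny{ad}}}$ alone; during the long middle phase the background control is exactly $\sigma$, and Lemma \ref{lemma_semigroup_4} plus Lemma \ref{lemma_stair_case_6} produce a correction $w_T$ with $\|w_T\|_{L^\infty(V)}$ smaller than the distance from $\sigma$ to the complement of $\mbox{int}^V(\mathscr{U}_{\mbox{\tiny{ad}}}\cap V)$, so $w_T+\sigma$ is admissible. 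Note also that the paper needs no spatial staircase along a segment of intermediate states (which would be problematic anyway, since those intermediate points are neither steady states nor points of admissible trajectories, so Lemma \ref{lemma_10} does not apply to them); the only ``staircase'' is the temporal one inside Lemma \ref{lemma_stair_case_6}, which splits the single difference datum into $N$ small pieces steered to zero successively.
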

		
		\begin{remark}
			(i) Roughly, Theorem \ref{th_8} addresses the constrained controllability for all initial data $y_0$ and final target $y_1$, with $y_0,\ y_1\in E$, where
			\begin{equation*}
			E=\bigg\{y(\tau)\ \Big| \ \tau\in\mathbb{R}, \ y\in C^s(\mathbb{R};H)\quad\mbox{and}\quad \exists \ u\in L^2_{loc}(\mathbb{R};U) ,\bigg.
			\end{equation*}
			\begin{equation*}
			\bigg.\mbox{with}\quad u(t)\in\mathscr{U}_{\mbox{\tiny{ad}}}\quad\mbox{a.e.} \ t\in \mathbb{R}\quad \mbox{s.t.}\quad\frac{d}{dt}y(t)=Ay(t)+Bu(t),\quad t\in\mathbb{R} \bigg\}.
			\end{equation*}
			By Lemma \ref{lemma_semigroup_4}, one can check that
			\begin{equation*}
			\left\{y(\tau;0,u) \ \Big| \ \tau\in\mathbb{R},\, u\in C^s(\mathbb{R},\mathscr{U}_{\mbox{\tiny{ad}}}), \frac{d^j}{dt^j}u(0)=0, \ j=0,\dots,s \right\}\subset E.
			\end{equation*}
			Furthermore, we observe that such set $E$ includes some non-steady states.\\
			(ii) There are at least two differences between Theorem \ref{th_6} and Theorem \ref{th_8}. First of all, Theorem \ref{th_6} studies constrained controllability for some steady states, whereas Theorem \ref{th_8} can deal with constrained controllability for some non-steady states (see (i) of this remark). Secondly, in Theorem \ref{th_8} the controls $\overline{u}^i$ ($i=0,1$) defining the initial datum $\overline{y}^0(\tau_0)$ and final target $\overline{y}^1(\tau_1)$ are required to fulfill the constraint
			\begin{equation*}
			\overline{u}^i(t)\in \mathscr{U}_{\mbox{\tiny{ad}}}, \quad\mbox{a.e.} \ t\in\mathbb{R}, \ i=0,1,
			\end{equation*}
			while $\overline{u}^i$ in Theorem \ref{th_6} is required to be in $\mathscr{W}\subsetneq \mathscr{U}_{\mbox{\tiny{ad}}}$. (Then, in Theorem \ref{th_8} we have weakened the constraints on $\overline{u}^i$. In particular, we are able to apply Theorem \ref{th_8} to the wave system with nonnegative controls with final target $\overline{y}^1\equiv 0$.)
		\end{remark}

		Before proving Theorem \ref{th_8}, we show a preliminary lemma. Note that such Lemma works with any \textit{contractive} semigroup. In particular, it holds both for wave-like and heat-like systems. A similar result was proved in \cite{phung2007existence} and \cite{loheac2013maximum}. For the sake of completeness, we provide the proof of the aforementioned lemma in the Appendix.
		
		\begin{lemma}[Null Controllability by small controls]\label{lemma_stair_case_6}
			Assume that $A$ generates a
			contractive $C_0$-semigroup $(\mathbb{T}_t)_{t\in\mathbb{R}^+}$ over $H$.  Suppose that ($H_1^{\prime}$) holds. Let $\varepsilon >0$ and $\eta_0\in D(A^s)$. Then, there exists $\overline{T}=\overline{T}(\varepsilon,\|\eta_0\|_{D(A^{s})})>0$ such that, for any $T\geq \overline{T}$, there exists a control $v\in L^{\infty}((0,T);V)$ such that
			\begin{itemize}
				\item $y(T;\eta_0,v)=0$;
				\item $\|v\|_{L^{\infty}(\mathbb{R}^+;V)}\leq \varepsilon$.
			\end{itemize}
		\end{lemma}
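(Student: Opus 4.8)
The plan is to combine the smooth controllability property $(H_1^{\prime})$ with the contractivity of the semigroup via a stepwise dissipation argument. The key observation is that smooth controllability in time $T_0$ gives, for each regular initial datum, an $L^\infty$ control with norm controlled by $\|\cdot\|_{D(A^s)}$, but then after reaching $0$ and doing nothing, we are still at $0$; and by contractivity, \emph{waiting} between control phases does not increase the state norm. So the idea is: wait first for a long time $\theta$ (do nothing), so that the semigroup $\mathbb{T}_\theta$ acting on $\eta_0$ brings the $D(A^s)$-norm below some target level, and \emph{then} apply the smooth controllability control, whose $L^\infty$ norm is then bounded by $C$ times that small level.

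The main point to make this work is an estimate of the form $\|\mathbb{T}_t \eta_0\|_{D(A^s)}\to$ something controllable. First I would note that $A$ commutes with $\mathbb{T}_t$ on $D(A^s)$, so $\|\mathbb{T}_t\eta_0\|_{D(A^s)}^2 = \sum_{j=0}^s \|A^j \mathbb{T}_t \eta_0\|_H^2 = \sum_{j=0}^s \|\mathbb{T}_t A^j\eta_0\|_H^2 \le \sum_{j=0}^s \|A^j \eta_0\|_H^2 = \|\eta_0\|_{D(A^s)}^2$, using $\|\mathbb{T}_t\|_{\mathscr L(H,H)}\le 1$. Hence the $D(A^s)$-norm is \emph{nonincreasing} along the free flow. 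By itself this only gives $\|\mathbb{T}_t\eta_0\|_{D(A^s)}\le\|\eta_0\|_{D(A^s)}$, not decay to $0$; but that is already enough: we do not actually need decay. Here is the construction. Given $T\ge \overline T := T_0$, set $\theta = T - T_0 \ge 0$. Define $v(t) = 0$ on $(0,\theta)$; at time $\theta$ the state is $\eta_1 := \mathbb{T}_\theta \eta_0 \in D(A^s)$ with $\|\eta_1\|_{D(A^s)}\le\|\eta_0\|_{D(A^s)}$. Then apply the smooth controllability control from $\eta_1$ on $(\theta, \theta+T_0) = (\theta,T)$, call it $\tilde v$; by Definition \ref{contr_2_abstract}, $y(T_0;\eta_1,\tilde v)=0$, i.e. $y(T;\eta_0,v)=0$ with $v(\theta+\cdot)=\tilde v$, and $\|\tilde v\|_{L^\infty((\theta,T);V)}\le C\|\eta_1\|_{D(A^s)}\le C\|\eta_0\|_{D(A^s)}$.

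This last bound is by $C\|\eta_0\|_{D(A^s)}$, which may exceed $\varepsilon$ — so a single smooth-controllability phase is not enough, and this is the main obstacle: we must genuinely \emph{shrink} the state before the final control phase, and contractivity alone does not force shrinkage. The fix is to iterate. The plan is: perform $N$ successive "partial" maneuvers. For this, instead of steering all the way to $0$, at each step I would use smooth controllability applied to the datum scaled down — more precisely, observe that if $(H_1^{\prime})$ holds then for any $\lambda\in(0,1)$ and any $\eta\in D(A^s)$, steering $\eta$ to $0$ in time $T_0$ with control $v$ of norm $\le C\|\eta\|_{D(A^s)}$, one may instead \emph{stop early conceptually} — but cleaner: steer $\eta$ to $0$, then note the trajectory passes through states; that is awkward. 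The genuinely clean route is to use the linearity and a telescoping/geometric decomposition: write $\eta_0$, apply on $(0,T_0)$ a control $v_1$ with $\|v_1\|_{L^\infty}\le C\|\eta_0\|_{D(A^s)}$ steering $\eta_0$ to $0$ — no iteration needed for exact control, the issue is only the \emph{size} of $v_1$. To beat the size, split $\eta_0 = \sum_{k=1}^N \frac1N\eta_0$ is pointless since it's linear. The actual mechanism must be: run the system \emph{uncontrolled} long enough that $\|\mathbb{T}_\theta\eta_0\|_{D(A^s)}\le \varepsilon/C$. For that we need some decay, which is \emph{not} available from $(H_1^{\prime})$–$(H_3^{\prime})$ alone for a unitary group. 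Therefore I expect the correct argument to instead \emph{subdivide the control horizon}: choose $N$ with $C/N$ small; on the $k$-th subinterval of length $T_0$ apply the smooth-controllability control that steers the \emph{current} state the fraction-$1/N$ of the way, i.e. from $\eta_{k-1}$ to $(1-1/N)\eta_{k-1}$; by linearity this control is $\tfrac1N$ times the one steering $\eta_{k-1}$ to $0$, hence has norm $\le \tfrac{C}{N}\|\eta_{k-1}\|_{D(A^s)}\le \tfrac{C}{N}\|\eta_0\|_{D(A^s)}$ (the last step using the monotonicity of the $D(A^s)$-norm established above, since between the controlled action the remaining piece is just $\mathbb{T}_{T_0}$ applied plus the control, and one checks $\|\eta_k\|_{D(A^s)}$ stays $\le\|\eta_0\|_{D(A^s)}$). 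After $N$ steps the state is $(1-1/N)^N\eta_0$, nonzero, so one appends a final phase... this still does not hit $0$ exactly. Hence the cleanest correct version: take $N-1$ shrinking steps to reach $\|\eta_{N-1}\|_{D(A^s)}\le \varepsilon/C$ by choosing the fractions so the product telescopes below $\varepsilon/(C\|\eta_0\|)$, then a final genuine null-control step on $(T-T_0,T)$ of norm $\le C\|\eta_{N-1}\|_{D(A^s)}\le\varepsilon$. Setting $\overline T = N T_0$ with $N=N(\varepsilon,\|\eta_0\|_{D(A^s)})$ chosen accordingly, and $v$ the concatenation (padded by zeros on $(\,\overline T, T)$ if $T>\overline T$, which is harmless by contractivity), gives $y(T;\eta_0,v)=0$ and $\|v\|_{L^\infty(\mathbb R^+;V)}\le\varepsilon$. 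The delicate point to verify carefully is precisely the uniform bound $\|\eta_k\|_{D(A^s)}\le\|\eta_0\|_{D(A^s)}$ along the iteration, which rests on the norm-monotonicity computation above together with keeping track that each intermediate control keeps the state in $D(A^s)$ — this uses $(H_3^{\prime})$ (group structure and $A$ invertible) to propagate regularity.
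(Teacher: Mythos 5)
Your final construction is correct, but it takes a genuinely different route from the paper's, and amusingly you explicitly considered and discarded the paper's actual mechanism. You wrote that splitting $\eta_0=\sum_{k=1}^{N}\frac1N\eta_0$ ``is pointless since it's linear'' --- but that superposition is precisely the paper's proof: each copy $\frac1N\eta_0$ evolves freely until time $kT_0$ (its $D(A^s)$-norm not increasing, by the same contractivity computation you make in your second paragraph), is then steered to zero on $(kT_0,(k+1)T_0)$ by a control of $L^\infty(V)$-norm at most $\frac{C}{N}\|\eta_0\|_{D(A^s)}\le\varepsilon$, and stays at zero afterwards. The point you missed is that these $N$ controls have \emph{disjoint time supports}, so the $L^\infty$-in-time norm of their sum is the maximum, not the sum; superposition then kills $\eta_0$ at time $\overline T=NT_0$ with $N>C\|\eta_0\|_{D(A^s)}/\varepsilon$. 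Your alternative --- apply $\frac1N$ of the null control at each step so the state shrinks to $(1-\frac1N)\mathbb{T}_{T_0}\eta_{k-1}$ (note the extra $\mathbb{T}_{T_0}$, which you drop but which is harmless since $\|\mathbb{T}_{T_0}\|_{\mathscr L(D(A^s),D(A^s))}\le1$), iterate until $\|\eta_{M}\|_{D(A^s)}\le\varepsilon/C$, then perform one full null-control step --- also works, and rests on the same two ingredients (contractivity in $D(A^s)$ and linearity of the input-to-state map). What it costs is a longer control horizon, roughly $T_0\cdot\frac{C\|\eta_0\|}{\varepsilon}\log\bigl(\tfrac{C\|\eta_0\|}{\varepsilon}\bigr)$ versus the paper's $T_0\cdot\frac{C\|\eta_0\|}{\varepsilon}$, and it requires tracking two parameters (the per-step fraction and the number of iterations) instead of one. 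Two small corrections: the extension to arbitrary $T\ge\overline T$ is cleaner by padding with zero control \emph{after} reaching the origin (the state then stays at $0$ exactly), and no appeal to $(H_3')$ is needed to propagate $D(A^s)$-regularity between steps --- the intermediate state is explicitly $(1-\frac1N)\mathbb{T}_{T_0}\eta_{k-1}$, which lies in $D(A^s)$ because the semigroup preserves $D(A^s)$; indeed the lemma is stated (and proved in the paper) assuming only a contractive semigroup and $(H_1')$.
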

		The proof of the Lemma above is given in the Appendix.

		We are now ready to prove Theorem \ref{th_8}.
		
		With respect to Theorem \ref{th_5} we have weakened the constraints on the controls defining the initial and final trajectories. Then, a priori, we have lost the room for oscillations needed in the proof of that Theorem. We shall see how to recover this by modifying the initial and final trajectories away from the initial and final data (see figure \ref{INDAM 2018 proof 1}, figure \ref{INDAM 2018 proof 2} and figure \ref{INDAM 2018 proof 3}).
		
		\begin{figure}[htp]
			\begin{center}
				\includegraphics[width=8cm]{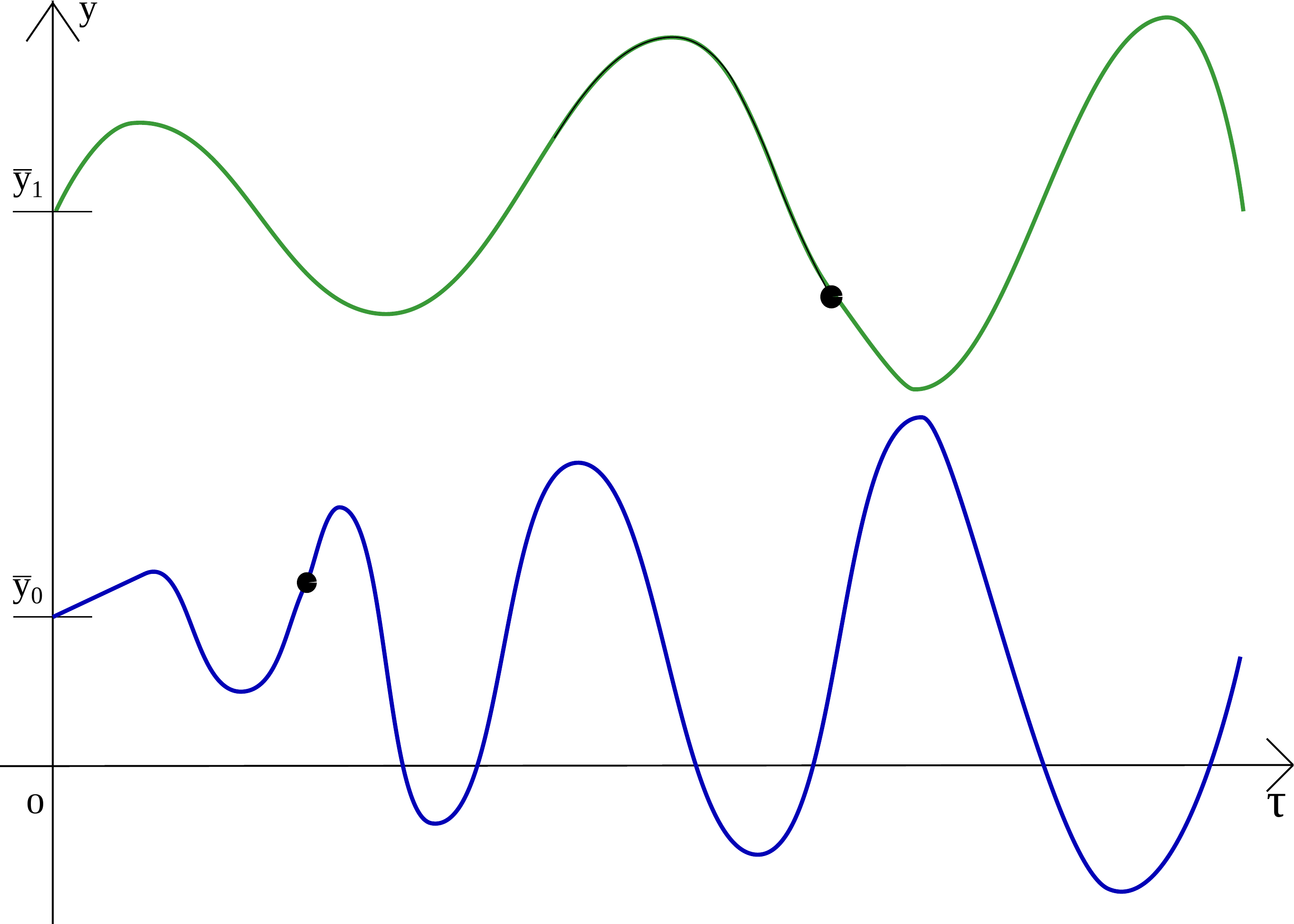}\\
				\caption{The two original trajectories. The time $\tau$ parameterizing the trajectories is just a parameter
					independent of the control time $t$.}\label{INDAM 2018 proof 1}
			\end{center}
		\end{figure}
		\begin{figure}[htp]
			\begin{center}
				\includegraphics[width=8cm]{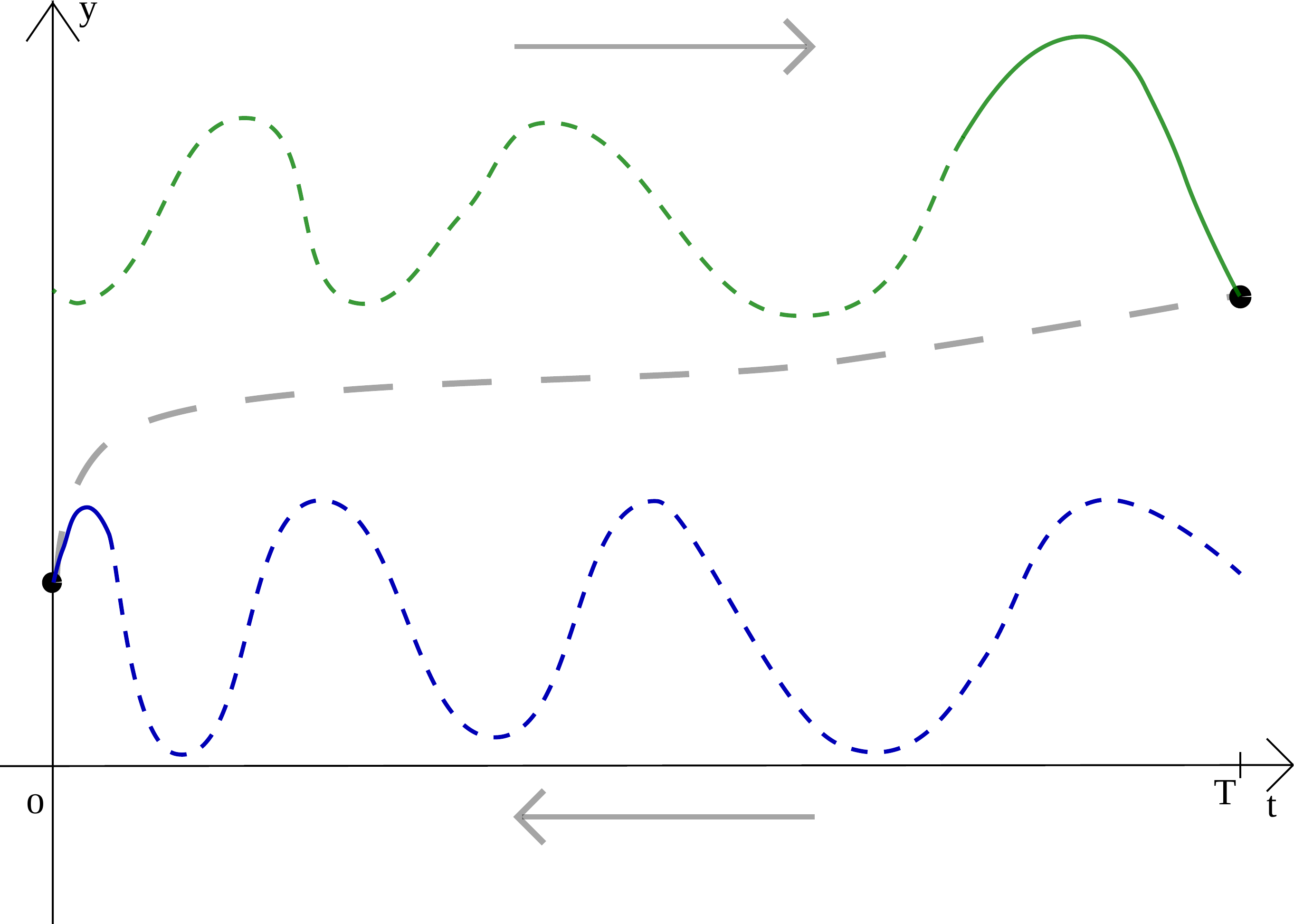}\\
				\caption{The new trajectories to be linked, now synchronized with the control time $t$. Note that 1) we have translated the time parameter defining the trajectories and 2) we have modified them away from the initial and the final data, to apply Lemma \ref{lemma_stair_case_6}. The new initial trajectory is represented in blue, while the new final trajectory is drawn in green. The modified part is dashed. Following the notation of the  proof of Theorem \ref{th_8}, the new initial trajectory is $y(\cdot;\hat{u}^0,\overline{y}_0(\tau_0))$, while the new final trajectory is $\varphi_T$.}\label{INDAM 2018 proof 2}
			\end{center}
		\end{figure}
		\begin{figure}[htp]
			\begin{center}
				\includegraphics[width=8cm]{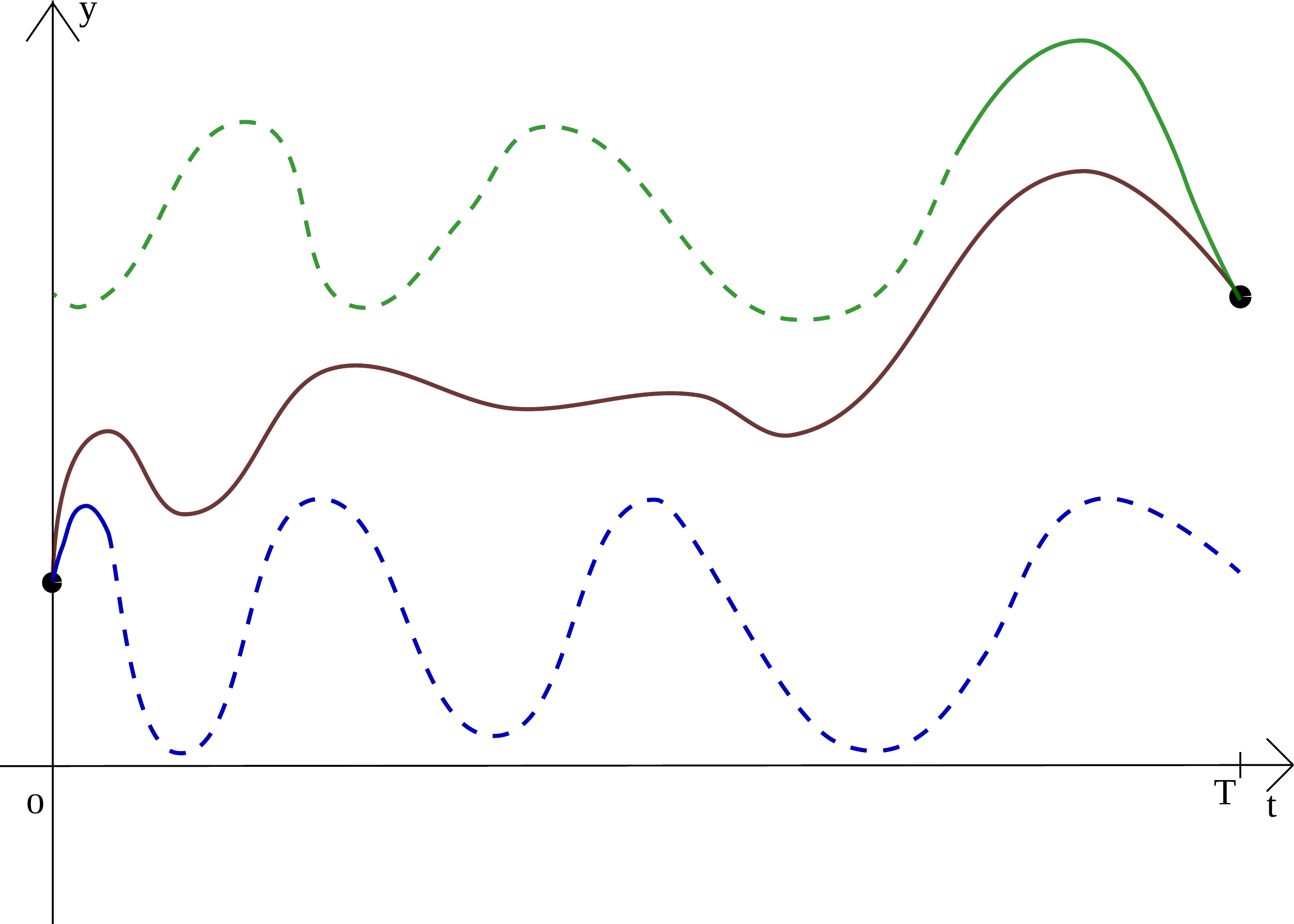}\\
				\caption{The new trajectories linked by the controlled trajectory $y$, pictured in red. As in figure \ref{INDAM 2018 proof 2}, the new initial trajectory is drawn in blue, while the new final trajectory is represented in green.}\label{INDAM 2018 proof 3}
			\end{center}
		\end{figure}

		\begin{proof}[Proof of Theorem \ref{th_8}]
			The main strategy of proof is the following:
			\begin{itemize}
				\item[(i)] we reduce the constrained controllability problem (with initial data $\overline{y}_0(\tau_0)$ and final target $\overline{y}_1(\tau_1)$) to another controllability problem (with initial datum $\hat{y}_0$ and final target $0$);
				\item[(ii)]  we solve the latter controllability problem by constructing two controls. The first control is used to improve the regularity of the solution. The second control is small in a regular space and steers the system to rest.
			\end{itemize}

			\textit{Step 1} \ \textbf{The part (i) of the above strategy.}\\
			For each $T>0$, we aim to define a new trajectory with the final state $\overline{y}_1(\tau_1)$ as value at time $t=T$. Choose a smooth function $\zeta\in C^{\infty}(\mathbb{R})$ such that
			\begin{equation}\label{def_zeta}
			\zeta\equiv 1 \ \mbox{over} \ \left(-\frac12,\frac12\right)\quad \mbox{and} \quad \mbox{supp}(\zeta)\subset\subset (-1,1).
			\end{equation}	
			Take $\sigma\in \mbox{int}^V(\mathscr{U}_{\mbox{\tiny{ad}}}\cap V)$. Arbitrarily fix $T>1$. Define a control
			\begin{equation}\label{th8_eq_6}
			\hat{u}_{T}^1(t)=\zeta(t-T)\overline{u}^1(t-T+\tau_1)+(1-\zeta(t-T))\sigma.
			\end{equation}
			We denote by $\varphi_T$ the unique solution to the problem
			\begin{equation}\label{th8_eq_-6}
			\begin{cases}
			\frac{d}{dt}\varphi(t)=A\varphi(t)+B\hat{u}_T^1(t)& t\in \mathbb{R}\\
			\varphi(T)=\overline{y}_1(\tau_1).\\
			\end{cases}
			\end{equation}
			In what follows, we will construct two controls which sends $\overline{y}_0(\tau_0)-\varphi_T(0)$ to $0$ in time $T$, which is part (ii) of our strategy. Recall that $\rho$ is given by \eqref{lemma_10_hypo}. We define 
			\begin{equation*}
			\hat{u}^0(t)=\rho(t)\overline{u}^0(t+\tau_0)+(1-\rho(t))\sigma \quad t\in\mathbb{R}.
			\end{equation*}
			\textit{Step 2} \ \textbf{Estimate of $\|y(1;\overline{y}_0(\tau_0)-\varphi_T(0),\hat{u}^0-\hat{u}^1_T)\|_{D(A^s)}$}\\
			We take the control $(\hat{u}^0-\hat{u}^1_T)\hspace{-0.1 cm}\restriction_{(0,1)}$ to be the first control mentioned in part (ii) of our strategy. In this step, we aim to prove the following regularity estimate associated with this control:
			there exists a constant $C>0$ independent of $T$ and $\sigma$ such that
			\begin{equation}\label{th_8_eq_20}
			\|y(1;\overline{y}_0(\tau_0)-\varphi_T(0),\hat{u}^0-\hat{u}^1_T)\|_{D(A^s)}
			\end{equation}
			\begin{equation*}
			\leq C\left[\|\overline{y}_0\|_{C^s([\tau_0,\tau_0+1];H)}+\|\overline{y}_1\|_{C^s([\tau_1-1,\tau_1];H)}+\|\sigma\|_{U}\right].
			\end{equation*}
			
			To begin, we introduce $\psi$ the solution to
			\begin{equation}\label{th8_eq_59}
			A\psi+B\sigma=0.
			\end{equation}
			First, we have that
			\begin{eqnarray}\label{th8_eq_60}
			&\;&y(1;\overline{y}(\tau_0)-\varphi_T(0),\hat{u}^0-\hat{u}_T^1)\nonumber\\
			&=&y(1;\overline{y}(\tau_0),\hat{u}^0)-y(1;\varphi_T(0),\hat{u}^1)\nonumber\\
			&=&[y(1;\overline{y}(\tau_0),\hat{u}^0)-\psi]-[y(1;\varphi_T(0),\hat{u}_T^1)-\psi]\nonumber\\
			&=&y(1;\overline{y}(\tau_0)-\psi,\hat{u}^0-\sigma)-y(1;\varphi_T(0)-\psi,\hat{u}_T^1-\sigma).
			\end{eqnarray}
			To estimate \eqref{th_8_eq_20}, we need to compute the norms of the last two terms in \eqref{th8_eq_60}, in the space $D(A^s)$. We claim that there exists $C_1>0$ (independent of $T$ and $\sigma$) such that
			\begin{equation}\label{th8_eq_66}
			\|y(1;\overline{y}(\tau_0)-\psi,\hat{u}^0-\sigma)\|_{D(A^s)}\leq C_1\left(\|\overline{y}_0\|_{C^s([\tau_0,\tau_0+1];H)}+\|\sigma\|_{U}\right).
			\end{equation}
			To this end, we show that
			\begin{equation}\label{th_8_eq90}
			y(t;\overline{y}(\tau_0)-\psi,\hat{u}^0-\sigma)=\rho(t)(\overline{y}^0(t+\tau_0)-\psi)+\eta_2(t),\quad t\in \mathbb{R},
			\end{equation}
			where $\eta_2$ solves
			\begin{equation}\label{th8_eq_100}
			\begin{cases}
			\frac{d}{dt}\eta_2(t)=A\eta_2(t)-\rho^{\prime}(\overline{y}(t+\tau_0)-\psi)& t\in \mathbb{R}\\
			\eta_2(0)=0.\\
			\end{cases}
			\end{equation}
			Indeed,
			\begin{eqnarray}\label{th8_eq_101}
			&\;&\frac{d}{dt}\left[\rho(t)(\overline{y}^0(t+\tau_0)-\psi)+\eta_2(t)\right]\nonumber\\
			&=&\rho(t)(A\overline{y}^0(t+\tau_0)+B\overline{u}^0(t+\tau_0))+\rho^{\prime}(t)(\overline{y}^0(t+\tau_0)-\psi)\nonumber\\
			&&+A\eta_2(t)-\rho^{\prime}(t)(\overline{y}^0(t+\tau_0)-\psi)\nonumber\\
			&=&A(\rho(t)\overline{y}^0(t+\tau_0)+\eta_2(t))+B\left(\rho(t)\overline{u}^0(t+\tau_0)\right)\nonumber\\
			&=&A(\rho(t)(\overline{y}^0(t+\tau_0)-\psi)+\eta_2(t))+\rho(t)A\psi+B\left(\rho(t)\overline{u}^0(t+\tau_0)\right)\nonumber\\
			&=&A(\rho(t)(\overline{y}^0(t+\tau_0)-\psi)+\eta_2(t))+B\left(\rho(t)(\overline{u}^0(t+\tau_0)-\sigma)\right)\nonumber\\
			&=&A(\rho(t)(\overline{y}^0(t+\tau_0)-\psi)+\eta_2(t))+B(\hat{u}^0(t)-\sigma).
			\end{eqnarray}
			At the same time, since $\rho(0)=1$, from \eqref{th8_eq_100}, it follows that
			\begin{equation*}
			\rho(t)(\overline{y}^0(t+\tau_0)-\psi)+\eta_2(t)\hspace{-0.1 cm}\restriction_{t=0}=\overline{y}^0(\tau_0)-\psi.
			\end{equation*}
			From this and \eqref{th8_eq_101}, we are led to \eqref{th_8_eq90}.
			
			Next, we will use \eqref{th_8_eq90} and \eqref{th8_eq_100} to prove \eqref{th8_eq_66}. To this end, since we assumed $\overline{y}^0\in C^s(\mathbb{R};H)$ and $\psi$ is independent of $t$, we get that
			\begin{equation*}
			\overline{y}^0(\cdot+\tau_0)-\psi\in C^s(\mathbb{R};H).
			\end{equation*}
			By this, we apply Lemma \ref{lemma_semigroup_4} obtaining the existence of $\hat{C}_1>0$ (independent of $T$ and $\sigma$) such that
			\begin{equation}\label{th8_eq_120}
			\|\eta_2(1)\|_{D(A^s)}\leq \hat{C}_1\left(\|\overline{y}^0\|_{C^s([\tau_0,\tau_0+1];H)}+\|\psi\|_H\right).
			\end{equation}
			At the same time, since $\rho(1)=0$ (see \eqref{lemma_10_hypo}), by \eqref{th_8_eq90}, we have that
			\begin{equation*}
			y(1;\overline{y}(T_0)-\psi,\hat{u}^0-\sigma)=\eta_2(1).
			\end{equation*}	 
			This, together with \eqref{th8_eq_120} and \eqref{th8_eq_59}, yields \eqref{th8_eq_66}.
			
			At this point, we estimate the norm of the second term in \eqref{th8_eq_60} in the space $D(A^s)$, namely we prove the existence of $C_2>0$ (independent of $T$ and $\sigma$) such that
			\begin{equation}\label{th8_eq121}
			\|y(1;\varphi_T(0)-\psi,\hat{u}_T^1-\sigma)\|_{D(A^s)}\leq C_2\left[\|\overline{y}^1\|_{C^s([\tau_1-1,\tau_1];H)}+\|\sigma\|_{U}\right].
			\end{equation}
			To this end, as in the proof of \eqref{th_8_eq_20}, we get that
			\begin{equation}\label{th8_eq122}
			y(t;\varphi_T(0)-\psi,\hat{u}_T^1-\sigma)=\zeta(t-T)(\overline{y}^1(t-T+\tau_1)-\psi)+\tilde{\eta}_2(t), \quad t\in\mathbb{R},
			\end{equation}
			where $\tilde{\eta}_2$ solves
			\begin{equation}\label{th8_eq123}
			\begin{cases}
			\frac{d}{dt}\tilde{\eta}_2(t)=A\tilde{\eta}_2(t)-\zeta^{\prime}(t-T)(\overline{y}^1(t-T+\tau_1)-\psi)& t\in \mathbb{R}\\
			\tilde{\eta}_2(T)=0.\\
			\end{cases}
			\end{equation}
			We will use \eqref{th8_eq122} and \eqref{th8_eq123} to prove \eqref{th8_eq121}. Indeed, set
			\begin{equation*}
			\hat{\eta}(t)=\tilde{\eta}_2(T-t).
			\end{equation*}
			By definition of $\hat{\eta}$, we have
			\begin{equation}\label{th8_eq124}
			\begin{cases}
			\frac{d}{dt}\hat{\eta}(t)=-A\hat{\eta}(t)+\zeta^{\prime}(-t)(\overline{y}^1(\tau_1-t)-\psi)& t\in \mathbb{R}\\
			\hat{\eta}(0)=0.\\
			\end{cases}
			\end{equation}
			Since we have assumed
			$\overline{y}^1\in C^s(\mathbb{R}, H)$ and $\psi$ is independent of $t$ (see \eqref{th8_eq_59}), we have
			\begin{equation*}
			\overline{y}^1-\psi\in C^s(\mathbb{R};H).
			\end{equation*}
			Recall that $\zeta(t)\equiv 1$ in $\left(-\frac12,\frac12\right)$ (see \eqref{def_zeta}). Then, $\zeta^{\prime}(t)= 0$, for each $t\in \left(-\frac12,\frac12\right)$. Now, by hypothesis $(H_3^{\prime})$, $A$ generates a group of operators.
			Hence, we can apply Lemma \ref{lemma_semigroup_4} to \eqref{th8_eq124} getting the existence of $\tilde{C}_2>0$ (independent of $T$ and $\sigma$) such that
			\begin{equation*}
			\|\hat{\eta}(1)\|_{D(A^s)}\leq \tilde{C}_2\left(\|\overline{y}^1\|_{C^s([\tau_1-1,\tau_1];H)}+\|\psi\|_H\right),
			\end{equation*}
			whence
			\begin{equation}\label{th8_eq130}
			\|\tilde{\eta}_2(T-1)\|_{D(A^s)}\leq \tilde{C}_2\left(\|\overline{y}^1\|_{C^s([\tau_1-1,\tau_1];H)}+\|\psi\|_H\right).
			\end{equation}
			At the same time, by ($H_3^{\prime}$) and some computations, we have that
			\begin{equation*}
			\|\mathbb{T}_t\|_{\mathscr{L}(D(A^s),D(A^s))}=1, \quad \mbox{for each} \ t\in\mathbb{R}.
			\end{equation*}
			Since $\zeta(t-T)=0$, for each $t\in [0,T-1]$ (see \eqref{def_zeta}), the above, together with \eqref{th8_eq122} and \eqref{th8_eq123}, yields
			\begin{equation*}
			\|y(1;\varphi_T(0)-\psi,\hat{u}_T^1-\sigma)\|_{D(A^s)}=\|\tilde{\eta}_2(1)\|_{D(A^s)}
			=\|\tilde{\eta}_2(T-1)\|_{D(A^s)}.
			\end{equation*}
			This, together with \eqref{th8_eq130} and \eqref{th8_eq_59}, leads to \eqref{th8_eq121}.\\
			\textit{Step 3} \ \textbf{Conclusion.}\\
			In this step, we will first construct the second control mentioned in part (ii) of our strategy. Then we put together the first and second controls (mentioned in part (ii)) to get the conclusion.
			
			By \eqref{th8_eq121},
			\begin{equation*}
			\|y(1;\varphi_T(0)-\psi,\hat{u}_T^1-\sigma)\|_{D(A^s)}\leq C_2\left[\|\overline{y}^1\|_{C^s([\tau_1-1,\tau_1];H)}+\|\sigma\|_{U}\right].
			\end{equation*}
			The above estimate is independent of $T$. Then for each $T>0$, by  Lemma \ref{lemma_stair_case_6}, there exists
			$$\overline{T}=\overline{T}(\sigma,\|\overline{y}^0\|_{C^s([\tau_0,\tau_0+1];H)},\|\overline{y}^1\|_{C^s([\tau_1-1,\tau_1];H)})>0$$ and $w_T\in L^{\infty}(\mathbb{R}^+;V)$ such that
			\begin{equation}\label{th8_eq158}
			\begin{cases}
			\frac{d}{dt}z(t)=Az(t)+Bw_T(t)& t\in (1,\overline{T})\\
			z(1)=y(1;\overline{y}(\tau_0)-\varphi_T(0),\hat{u}^0-\hat{u}_T^1),\quad z(\overline{T})=0\\
			\end{cases}
			\end{equation}
			and
			\begin{equation}\label{th8_eq179}
			\|w_T\|_{L^{\infty}(1,\overline{T};V)}\leq \frac12 \inf_{y\in V\setminus \mbox{int}^V(\mathscr{U}_{\mbox{\tiny{ad}}}\cap V)}\|\sigma-y\|_V.
			\end{equation}
			Note that the last constant is positive, because $\sigma$ is taken from $\mbox{int}^V(\mathscr{U}_{\mbox{\tiny{ad}}})$.
			Choose\\
			$T=\overline{T}+1$. Define a control:
			\begin{equation}\label{th8_eq180}
			v=\begin{cases}
			\hat{u}^0(t) \quad &t\in \ (0,1)\\
			w_T(t)+\hat{u}_T^1(t) \quad &t\in \ (1,\overline{T})\\
			\hat{u}_T^1(t) \quad &t\in \ (\overline{T},\overline{T}+1).\\
			\end{cases}
			\end{equation}
			We aim to show that
			\begin{equation}\label{th8_eq_200}
			y(\overline{T}+1;\overline{y}^0(\tau_0),v)=\overline{y}^0(\tau_1)\quad\mbox{and}\quad v(t)\in \mathscr{U}_{\mbox{\tiny{ad}}} \ \mbox{a.e.} \ t\in \ (1,\overline{T}+1).
			\end{equation}
			To this end, by \eqref{th8_eq180}, \eqref{th8_eq158} and \eqref{th8_eq_-6}, we get that
			\begin{eqnarray}
			y(\overline{T}+1;\overline{y}^0(\tau_0),v)&=&y(\overline{T}+1;\overline{y}^0(\tau_0)-\varphi_T(0),v-\hat{u}^1_T)+ y(\overline{T}+1;\varphi_T(0),\hat{u}^1_T)\nonumber\\
			&=&\mathbb{T}_1(z_T(\overline{T}))+\varphi_T(\overline{T}+1)\nonumber\\
			&=&\overline{y}^1(\tau_1)\nonumber.
			\end{eqnarray}
			This leads to the first conclusion of \eqref{th8_eq_200}. It remains to show the second condition in \eqref{th8_eq_200}. Arbitrarily fix $t\in (0,1)$. By \eqref{th8_eq180} and \eqref{th8_eq121}, we have
			\begin{equation*}
			v(t)=\rho(t)\overline{u}^0(t+\tau_0)+(1-\rho(t))\sigma
			\end{equation*}
			\begin{equation*}
			\in \rho(t)\mathscr{U}_{\mbox{\tiny{ad}}}+(1-\rho(t))\mathscr{U}_{\mbox{\tiny{ad}}}\subset \mathscr{U}_{\mbox{\tiny{ad}}}.
			\end{equation*}
			Choose also an arbitrary $s\in (1,\overline{T})$. By \eqref{th8_eq180}, \eqref{th8_eq179} and \eqref{th8_eq_6}, we obtain
			\begin{equation*}
			v(s)=w(s)+(1-\zeta(s-\overline{T}-1))\sigma+\zeta(s-\overline{T}-1)\overline{u}^1(s-\overline{T}-1+\tau_1)
			\end{equation*}
			\begin{equation*}
			=w(s)+\sigma \in \mbox{int}^V(\mathscr{U}_{\mbox{\tiny{ad}}}\cap V)\subset \mathscr{U}_{\mbox{\tiny{ad}}}.
			\end{equation*}
			Take any $t\in (\overline{T},\overline{T}+1)$. We find from \eqref{th8_eq180} and \eqref{th8_eq_6} that
			\begin{equation*}
			v(t)=\zeta(t-\overline{T}-1)\overline{u}^1(t-\overline{T}-1+\tau_1)+(1-\zeta(t-\overline{T}-1))\sigma
			\end{equation*}
			\begin{equation*}
			\in \zeta(t-\overline{T}-1)\mathscr{U}_{\mbox{\tiny{ad}}}+(1-\zeta(t-\overline{T}-1))\mathscr{U}_{\mbox{\tiny{ad}}} 
			\end{equation*}
			\begin{equation*}
			\subset \mathscr{U}_{\mbox{\tiny{ad}}}.
			\end{equation*}
			Therefore, we are led to the second conclusion of \eqref{th8_eq_200}. This ends the proof.
			\qed
		\end{proof}

		\section{Internal Control: Proof of Theorem \ref{th_2} and Theorem \ref{th_3}}
		\label{sec:3}
		
		The present section is organized as follows:
		\begin{itemize}
			\item Subsection \ref{subsec:3.1}: proof of Lemma \ref{Lemma_smooth_contr_wave_int} and Theorem \ref{th_2};
			\item Subsection \ref{subsec:3.2}: proof of Theorem \ref{th_3};
			\item Subsection \ref{subsec:3.3}: discussion of the issues related to the internal control touching the boundary.
		\end{itemize}
		
		\subsection{Proof of Theorem \ref{th_2}}
		\label{subsec:3.1}
		
		We now prove Theorem \ref{th_2} by employing Theorem \ref{th_6}.
		
		Firstly, we place our control system in the abstract framework introduced in section \ref{sec:2} and we prove that our control system is smoothly controllable (see Definition \ref{contr_2_abstract}).
		
		The free dynamics is generated by $A:D(A)\subset H\longrightarrow H$, where
		\begin{equation}\label{generator_internal_1}
		A=\begin{pmatrix}
		0&I\\
		-A_0&0\\
		\end{pmatrix},
		\hspace{0.5 cm}
		\begin{cases}
		H=H^1_0(\Omega)\times L^2(\Omega)\\
		D(A)=\left(H^2(\Omega)\cap H^1_0(\Omega)\right)\times H^1_0(\Omega).
		\end{cases}
		\end{equation}
		where $A_0=-\Delta+cI:H^2(\Omega)\cap H^1_0(\Omega)\subset L^2(\Omega)\longrightarrow L^2(\Omega)$. The control operator
		\begin{equation*}
		B(v)=\begin{pmatrix}
		0\\
		\chi v.\\
		\end{pmatrix}
		\end{equation*}
		defined from $U=L^2(\omega)$ to $H=H^1_0(\Omega)\times L^2(\Omega)$ is bounded, then admissible.

		\begin{lemma}\label{Lemma_smooth_contr_wave_int}
			In the above framework take $V=H^{s(n)}(\omega)$ and $s=s(n)=\lfloor{n/2}\rfloor+1$. Assume further $(\Omega,\omega_0,T^*)$ fulfills the Geometric Control Condition.  Then, the control system \eqref{wave_internal_1} is smoothly controllable in any time $T_0>T^*$.
		\end{lemma}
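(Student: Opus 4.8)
\emph{Strategy and abstract setup.} The plan is to combine the classical unconstrained controllability of the wave equation under the Geometric Control Condition with the ``smooth controls for smooth data'' construction of Ervedoza--Zuazua \cite{ervedoza2010systematic}, realized through a time-weighted Hilbert Uniqueness Method. No smoothing effect is available here (the wave group preserves, but does not improve, Sobolev regularity), so the spatial regularity of the control must be extracted from the regularity of the data together with a time cut-off, not from the equation. First I would recall, from the framework just introduced, that $A$ in \eqref{generator_internal_1} is a bounded (lower-triangular, given by multiplication by $-c$) perturbation of the skew-adjoint generator of the free wave dynamics on $H=H^1_0(\Omega)\times L^2(\Omega)$, hence generates a $C_0$-group $(\mathbb{T}_t)_{t\in\mathbb{R}}$, and that $B\in\mathscr{L}(U,H)$ is admissible with $B^{*}(\phi^0,\phi^1)=(\chi\phi^1)\big|_{\omega}$ for a suitable inner product on $H$. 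By elliptic regularity, $D(A^k)$ and $D(A^{*k})$ are identified with closed subspaces of $H^{k+1}(\Omega)\times H^{k}(\Omega)$ on which the graph norm is equivalent to the product Sobolev norm; in particular $B^{*}$ maps $D(A^{*s})$ continuously into $H^{s}(\omega)=V$, and $t\mapsto\mathbb{T}^{*}_{t}z\in C([0,T_0];D(A^{*s}))$ for $z\in D(A^{*s})$, with uniform bound on $[0,T_0]$.

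\emph{Classical and weighted observability.} Since $(\Omega,\omega_0,T^{*})$ satisfies GCC and $T_0>T^{*}$, the results of \cite{bardos1992sharp,burq1997condition} yield exact controllability of \eqref{wave_internal_1} in $H$, in time $T_0$, with $L^2$ controls supported in $\omega_0$; equivalently, the adjoint observability inequality holds on $(0,T_0)\times\omega_0$. Using the slack in the time window, I would fix $\ep_0>0$ with $T_0-2\ep_0>T^{*}$ and a cut-off $\theta\in C^{\infty}_c((\ep_0,T_0-\ep_0))$ with $0\le\theta\le1$ and $\theta\equiv1$ on $[2\ep_0,T_0-2\ep_0]$; restricting the observation to that sub-window and using $\chi\geq 0$ with $\chi\equiv1$ on $\omega_0$, one obtains the weighted observability inequality
\begin{equation*}
\|z\|_{H}^{2}\leq C\int_{0}^{T_0}\theta(t)\,\|B^{*}\mathbb{T}^{*}_{T_0-t}z\|_{U}^{2}\,dt,\qquad z\in H.
\end{equation*}

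\emph{Weighted HUM and regularity bootstrap.} Introduce the weighted Gramian $\Lambda_{\theta}z=\int_{0}^{T_0}\theta(t)\,\mathbb{T}_{T_0-t}BB^{*}\mathbb{T}^{*}_{T_0-t}z\,dt$, which is bounded, self-adjoint, and coercive by the weighted inequality, hence an isomorphism of $H$. Solving $\Lambda_{\theta}z_0=-\mathbb{T}_{T_0}y_0$ and setting $v(t)=\theta(t)B^{*}\mathbb{T}^{*}_{T_0-t}z_0$ gives, by the variation-of-constants formula, $y(T_0;y_0,v)=0$ with $\|v\|_{L^{2}(0,T_0;U)}\leq C\|y_0\|_{H}$. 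To upgrade the regularity I would integrate by parts in $t$ — legitimate because $\theta$ and all its derivatives vanish at $t=0$ and $t=T_0$ — obtaining the Sylvester-type identity $A\Lambda_{\theta}z+\Lambda_{\theta}A^{*}z=\Lambda_{\theta'}z$, an operator of exactly the same structure with the smooth weight $\theta'$. An induction on $k\le s$ based on this identity (and on its rearrangement $\Lambda_\theta A^{*}z=\Lambda_{\theta'}z-A\Lambda_\theta z$) shows that $\Lambda_{\theta}:D(A^{*k})\to D(A^{k})$ and $\Lambda_{\theta}^{-1}:D(A^{k})\to D(A^{*k})$ are bounded for all $k\le s$; hence $z_0=-\Lambda_{\theta}^{-1}\mathbb{T}_{T_0}y_0\in D(A^{*s})$ with $\|z_0\|_{D(A^{*s})}\leq C\|\mathbb{T}_{T_0}y_0\|_{D(A^{s})}\leq C\|y_0\|_{D(A^{s})}$, the last step by the group property. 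Therefore $t\mapsto B^{*}\mathbb{T}^{*}_{T_0-t}z_0\in C([0,T_0];V)$, and multiplying by the bounded function $\theta$ yields $v\in C([0,T_0];V)\subset L^{\infty}((0,T_0);V)$ with $\|v\|_{L^{\infty}((0,T_0);V)}\leq C\|y_0\|_{D(A^{s})}$. Since here $s=s(n)$ and $V=H^{s(n)}(\omega)$, this is exactly smooth controllability in any time $T_0>T^{*}$.

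\emph{Main obstacle.} The heart of the matter is the inductive regularity step: the unweighted Gramian does not commute with $A$, so everything rests on the time cut-off $\theta$ vanishing to infinite order at the endpoints, which simultaneously annihilates the boundary terms in the integration by parts and preserves the structural type of the operator ($\Lambda_{\theta}\mapsto\Lambda_{\theta'}$) along the iteration. A secondary technical point is the weighted observability inequality, which is why $T_0$ must be taken strictly larger than $T^{*}$ and $\mbox{supp}(\theta)$ placed inside a sub-window still long enough for GCC to apply.
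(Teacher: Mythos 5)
Your strategy is exactly the one the paper itself relies on: the text proves Lemma \ref{Lemma_smooth_contr_wave_int} by a one-line citation to \cite[Theorem 5.1]{ervedoza2010systematic}, and what you have written is a reconstruction of the weighted-HUM argument behind that theorem (compactly supported time weight $\theta$, weighted Gramian $\Lambda_\theta$, the commutator identity $A\Lambda_\theta+\Lambda_\theta A^*=\Lambda_{\theta'}$ obtained by integration by parts in time, and the resulting transfer of regularity from $y_0\in D(A^s)$ to $z_0\in D(A^{*s})$ and hence to the control $\theta(\cdot)B^*\mathbb{T}^*_{T_0-\cdot}z_0$). So the route is the same as the paper's, merely written out.

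There are, however, two points you pass over, and one of them is precisely the point the paper singles out as the crux. First, the clause ``an induction on $k\le s$ \dots shows that $\Lambda_\theta^{-1}:D(A^k)\to D(A^{*k})$ is bounded'' hides the hard half of the bootstrap: the forward inclusion $\Lambda_\theta(D(A^{*k}))\subset D(A^k)$ does follow from the weak form of the commutator identity, but the inverse regularity (from $\Lambda_\theta z\in D(A^k)$ conclude $z\in D(A^{*k})$) needs an a priori estimate using the coercivity of $\Lambda_\theta$ applied to $A^{*}z$, together with a density/approximation step, and this is where the structural hypothesis of Ervedoza--Zuazua enters. Second, and concretely for this wave system, that hypothesis is $BB^*\bigl(D((A^*)^k)\bigr)\subset D(A^k)$ for $k\le s(n)$, i.e.\ that multiplication by $\chi^2$ preserves the boundary conditions $\Delta^j\psi=0$ on $\partial\Omega$ characterizing $D(A^k)$; see \eqref{control_op_reg} and the discussion in Subsection \ref{subsec:3.3}. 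It holds here only because all derivatives of $\chi$ are assumed to vanish on $\partial\Omega$, and the paper explains that it can fail for $s\ge 5$ when $\chi$ is less degenerate at the boundary. Your write-up checks that $B^*$ maps $D(A^{*s})$ into $H^{s}(\omega)$ but never verifies this condition on $BB^*$, so the regularity bootstrap for $\Lambda_\theta$ is not fully justified in this concrete setting. The remaining ingredients (group generation, admissibility of the bounded $B$, the weighted observability obtained by shrinking the time window while keeping its length above $T^*$) are correct.
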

		The proof of this Lemma can be found in the reference \cite[Theorem 5.1]{ervedoza2010systematic}.
		
		We are now ready to prove Theorem \ref{th_2}.
		\begin{proof}[of Theorem \ref{th_2}]
			We choose as set of admissible controls:
			\begin{equation*}
			\mathscr{U}_{\mbox{\tiny{ad}}}=\left\{u\in L^2(\omega) \ | \ u\geq 0, \ \mbox{a.e.} \ \omega \right\}.
			\end{equation*}
			Then,
			\begin{equation}\label{strict_constrants_0}
			\bigcup_{\sigma >0}\left\{u\in L^2(\omega) \ | \ u\geq \sigma, \ \mbox{a.e.} \ {\omega}\right\}\subset 	\mathscr{W}.
			\end{equation}
			
			We highlight that, to prove \eqref{strict_constrants_0}, we need  $H^{s(n)}(\omega)\hookrightarrow C^0(\overline{\omega})$. For this reason, we have chosen $s(n)=\lfloor{n/2}\rfloor+1$.

			By Lemma \eqref{Lemma_smooth_contr_wave_int}, we have that the system is \textit{Smoothly Controllable} with $s=s(n)=\lfloor{n/2}\rfloor+1$ and $V=H^{s(n)}(\omega)$. Then, by Theorem \ref{th_6} we conclude.
			\qed\end{proof}
		
		\subsection{Proof of Theorem \ref{th_3}}
		\label{subsec:3.2}
		
		We prove now Theorem \ref{th_3}
		
		\begin{proof}[Proof of Theorem \ref{th_3}]
			As we have seen, our system fits the abstract framework. Moreover, we have checked in Lemma \ref{Lemma_smooth_contr_wave_int} that the system is \textit{Smoothly Controllable} with $s(n)=\lfloor{n/2}\rfloor+1$ and $V=H^{s(n)}(\omega)$. Furthermore, $\mbox{int}^V(\mathscr{U}_{\mbox{\tiny{ad}}}\cap V)\neq \varnothing$. Indeed, any constant $\sigma >0$ belongs to $\mbox{int}^V(\mathscr{U}_{\mbox{\tiny{ad}}}\cap V)$, since $H^{s(n)}(\omega)\hookrightarrow C^0(\overline{\omega})$. This is guaranteed by our choice of $s(n)=\lfloor{n/2}\rfloor+1$.
			
			Therefore, we are in position to apply Theorem \ref{th_8} and finish the proof.
			\qed
		\end{proof}

		\subsection{Internal controllability from a neighborhood of the boundary}
		\label{subsec:3.3}
		
		So far, we have assumed that the control is localized by means of a smooth cut-off function $\chi$ so that all its derivatives vanish on the boundary of $\Omega$. This implies that $\chi$ must be constant on any connected component of the boundary. This prevents us to localize the internal control in a region touching the boundary only on a subregion, as in figure \ref{interior_boundary}.
		
		\begin{figure}[htp]
			\begin{center}
				\includegraphics[width=6cm]{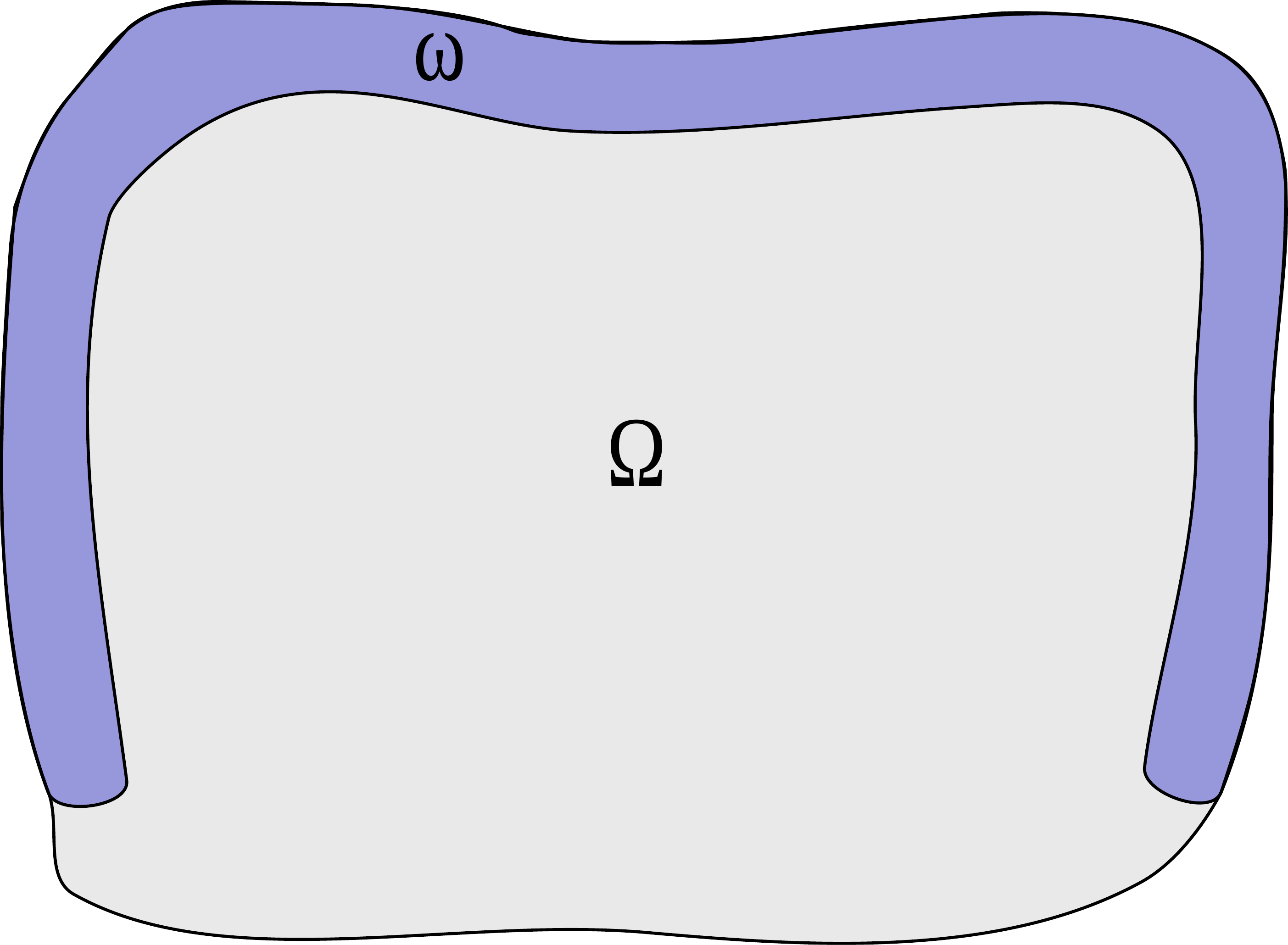}\\
				\caption{Controlling from the interior touching the boundary.}\label{interior_boundary}
			\end{center}
		\end{figure}
		
		In this case, as already pointed out in \cite{dehman2009analysis}, some difficulties in finding regular controls may arise. Indeed, as indicated both in \cite{dehman2009analysis} and in \cite{ervedoza2010systematic} a crucial property needs to be verified in order to have controls in $C^0([0,T];H^s(\omega))$, namely 
		\begin{equation}\label{control_op_reg}
		BB^*(D(A^*)^k)\subset D(A^k)
		\end{equation}
		for $k=0,\dots,s$, where we have used the notation of the proof of Theorem \ref{th_2}.
		
		Right now, for any $k\in \mathbb{N}$ we have
		\begin{equation*}\label{domain_generator_interior_-1}
		D(A^k)=\left\{\begin{pmatrix}
		\psi_1\\
		\psi_2\\
		\end{pmatrix}  \Bigg| \begin{matrix}
		\psi_1\in H^{k+1}(\Omega),&  \Delta^j\psi_1=0 \ \mbox{on} \ \partial\Omega,&  0\leq j \leq \lfloor{k/2}\rfloor\\
		\psi_2\in H^k(\Omega),&  \Delta^j\psi_2=0 \ \mbox{on} \ \partial\Omega,&  0\leq j\leq \lfloor{(k+1)/2}\rfloor-1\\
		\end{matrix}
		\right\},
		\end{equation*}
		while
		\begin{equation}\label{domain_generator_boundary_-1}
		D((A^*)^k)=\left\{\begin{pmatrix}
		\psi_1\\
		\psi_2\\
		\end{pmatrix}  \Bigg| \begin{matrix}
		\psi_1\in H^{k}(\Omega),& \Delta^j\psi_1=0 \ \mbox{on} \ \partial\Omega,\  0\leq j\leq\lfloor{(k-1)/2}\rfloor\\
		\psi_2\in H^{k-1}(\Omega), & \Delta^j\psi_2=0 \ \mbox{on} \  \partial\Omega,\ 0\leq j\leq\lfloor{k/2}\rfloor-1\\
		\end{matrix}
		\right\}.
		\end{equation}
		Furthermore,
		\begin{equation*}
		BB^*=\begin{pmatrix}
		0&0\\
		\chi^2&0\\
		\end{pmatrix}
		\end{equation*}
		Then, \eqref{control_op_reg} is verified if and only if for any $\psi\in H^s(\Omega)$ such that
		\begin{equation*}
		(\Delta)^j(\psi)=0,\quad 0\leq j\leq\lfloor{(s-1)/2}\rfloor,\quad\mbox{a.e. on} \ \partial\Omega
		\end{equation*}
		the following hold
		\begin{equation}\label{prop_to_be_ver}
		(\Delta)^j(\chi^2\psi)=0,\quad 0\leq j\leq\lfloor{(s-1)/2}\rfloor,\quad\mbox{a.e. on} \ \partial\Omega.
		\end{equation}
		Choosing $\chi$ so that all its normal derivatives vanish on $\partial \Omega$
		\begin{itemize}
			\item in case $s<5$, we are able to prove \eqref{control_op_reg}. Then, by adapting the techniques of \cite[Theorem 5.1]{ervedoza2010systematic}, we have that our system is \textit{Smoothly Controllable} (Definition \ref{contr_2_abstract}), with $s(n)=\lfloor{n/2}\rfloor+1$. This enables us to prove Theorem \ref{th_2} in space dimension $n<8$.
			\item in case $s\geq 5$, in \eqref{prop_to_be_ver} the biharmonic operator $(\Delta)^2$ enters into play. By computing it in normal coordinates on the boundary, some terms appear  involving the curvature and $\frac{\partial}{\partial \xi_k}{\chi}\frac{\partial}{\partial v}\psi$, where $(\xi_1,\dots,\xi_{n-1})$ are tangent coordinates, while $v$ is the normal coordinate. In general, these terms do not vanish, unless $\partial \Omega$ is flat. Then, for $n\geq 8$, we are unable to deduce a constrained controllability result in case the internal control is localized along a subregion of $\partial \Omega$.
		\end{itemize}

		\section{Boundary control: proof of Theorem \ref{th_1}, Theorem \ref{th_4} and Theorem \ref{th_5}}
		\label{sec:4}
		
		This section is devoted to boundary control and is organized as follows:
		\begin{itemize}
			\item Subsection \ref{subsec:4.1}: proof of Lemma \ref{lemma_smooth_controllability_wave_boundary} and Theorem \ref{th_1};
			\item Subsection \ref{subsec:4.2}: proof of Theorem \ref{th_4};
			\item Subsection \ref{subsec:4.3}: proof of Theorem \ref{th_5}.
		\end{itemize}
		
		\subsection{Proof of Theorem \ref{th_1}}
		\label{subsec:4.1}
		
		We prove Theorem \ref{th_1}.

		First of all, we explain how our boundary control system fits the abstract semigroup setting described in section \ref{sec:2}. The generator of the free dynamics is:
		\begin{equation}\label{generator_boundary_1}
		A=\begin{pmatrix}
		0&I\\
		-A_0&0\\
		\end{pmatrix},
		\hspace{0.5 cm}
		\begin{cases}
		H=L^2(\Omega)\times H^{-1}(\Omega)\\
		D(A)=H^1_0(\Omega)\times L^2(\Omega),
		\end{cases}
		\end{equation}
		where $A_0=-\Delta+cI:H^1_0(\Omega)\subset H^{-1}(\Omega)\longrightarrow H^{-1}(\Omega)$.
		The definition of the control operator is subtler than in the internal control case. Let $\Delta_0$ be the Dirichlet Laplacian. Then, the control operator
		\begin{equation*}
		B(v)=\begin{pmatrix}
		0\\
		-\Delta_{0} \tilde{z}\\
		\end{pmatrix},\quad\mbox{where}\ 
		\begin{cases}
		-\Delta\tilde{z}=0\hspace{0.6 cm} & \mbox{in} \hspace{0.10 cm}\Omega\\
		\tilde{z}=\chi v(\cdot,t) & \mbox{on}\hspace{0.10 cm} \partial\Omega.\\
		\end{cases}
		\end{equation*}
		defined from $L^2(\Gamma)$ to $H^{-\frac32}(\Omega)$. In this case, $B$ is unbounded but admissible (see  \cite{lions1988exact} or \cite[proposition 10.9.1 page 349]{OCO}).
		
		\begin{lemma}\label{lemma_smooth_controllability_wave_boundary}
			In the above framework, set $V=H^{s(n)-\frac12}(\Gamma)$ and $s=s(n)$, with $s(n)=\lfloor{n/2}\rfloor+1$. Suppose (GCC) holds for $(\Omega,\Gamma_0,T^*)$. Then, in any time $T_0>T^*$, the control system \eqref{wave_boundary_1} is smoothly controllable in time $T_0$.
		\end{lemma}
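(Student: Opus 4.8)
The plan is to deduce the smooth controllability of the boundary-controlled wave system from the classical boundary controllability result of Bardos--Lebeau--Rauch (valid under (GCC) for $(\Omega,\Gamma_0,T^*)$), upgrading the regularity of the control by the systematic method of Ervedoza--Zuazua \cite{ervedoza2010systematic}. Concretely, I would first recall that, since $(\Omega,\Gamma_0,T^*)$ satisfies (GCC), for every $T_0>T^*$ the pair $(A,B)$ in \eqref{generator_boundary_1} is exactly controllable in $H=L^2(\Omega)\times H^{-1}(\Omega)$ with $L^2((0,T_0)\times\Gamma)$ controls: this is \cite{bardos1992sharp}, \cite{burq1997condition}. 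The point is then to show that a \emph{regular} initial datum $y_0\in D(A^s)$ can be steered to rest by a control lying in $L^\infty((0,T_0);V)$ with $V=H^{s(n)-\frac12}(\Gamma)$, together with the norm bound \eqref{estimate_contr_2_abstract}.

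The key steps, in order, would be: (1) Identify the iterated domains $D(A^k)$ and $D((A^*)^k)$ for the boundary operator, exactly as displayed in \eqref{domain_generator_boundary_-1} of Subsection \ref{subsec:3.3} — these are Sobolev spaces of order $k$ in the first component and $k-1$ in the second, with the compatibility conditions $\Delta^j\psi=0$ on $\partial\Omega$ up to the appropriate order. (2) Verify the structural condition that the control operator $B$ and its adjoint interact well with these domains; in the boundary case $B^*$ is (up to the harmonic extension/trace identification) the normal-derivative trace operator $\partial_\nu$ restricted to $\Gamma$, composed with multiplication by $\chi$, and one checks that $BB^*$ maps $D((A^*)^k)$ into $D(A^k)$ for $k=0,\dots,s$ — here the hypothesis $\chi\in C^\infty(\partial\Omega)$ with $\mathrm{supp}(\chi)\subset\Gamma$, $\chi\equiv1$ on $\Gamma_0$ is used, and no flatness of $\partial\Omega$ is needed because $\chi$ lives on the boundary itself rather than being the restriction of a bulk cutoff (contrast with Subsection \ref{subsec:3.3}). (3) Apply \cite[Theorem 5.1]{ervedoza2010systematic}: the Hilbert Uniqueness Method gives a control of the form $u=B^*z$ where $z$ solves the adjoint equation with a datum obtained by inverting the Gramian; because the Gramian is an isomorphism on each $D((A^*)^k)^*$-scale (this is precisely where the regularity-compatible observability inequality of Remark (ii) comes in, derived from the classical observability estimate by commuting powers of $A^*$ through the adjoint flow), the control inherits $s$ extra derivatives and lands in $C^0([0,T_0];H^{s(n)-\frac12}(\Gamma))\hookrightarrow L^\infty((0,T_0);V)$, with the linear bound on $\|y_0\|_{D(A^s)}$. (4) Conclude that Definition \ref{contr_2_abstract} is satisfied in every time $T_0>T^*$.

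I expect the main obstacle to be step (2): correctly computing $B^*$ and the iterated domains in the boundary-control setting and verifying the compatibility conditions $\Delta^j(\chi^2\psi)=0$ on $\partial\Omega$ (the boundary analogue of \eqref{prop_to_be_ver}) for the relevant range of $j$. Since $\chi$ and $\psi$ both live intrinsically on $\partial\Omega$ (or, for $\psi$, have all the required tangential/normal traces controlled by the domain conditions), the Leibniz-type terms involving tangential derivatives of $\chi$ times normal data of $\psi$ either vanish by the compatibility conditions on $\psi$ or are absorbed, so the obstruction of Subsection \ref{subsec:3.3} does not appear here; nonetheless this verification is the technical heart of the argument. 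Everything else — the classical controllability input, the HUM construction, the Sobolev embedding $H^{s(n)-\frac12}(\Gamma)\hookrightarrow C^0(\overline{\Gamma})$ that will later make $\mathscr{W}$ nonempty — is routine once step (2) is in place.
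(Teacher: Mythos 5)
Your overall strategy --- upgrading the classical Bardos--Lebeau--Rauch controllability under (GCC) to controls in $L^\infty((0,T_0);H^{s(n)-\frac12}(\Gamma))$ via the systematic method of \cite{ervedoza2010systematic} --- is exactly the route the paper takes; indeed its entire proof is the one-line citation of \cite[Theorem 5.4]{ervedoza2010systematic}. The problem is that the step you yourself identify as the technical heart, step (2), is set up with the wrong structural hypothesis. The condition $BB^*\bigl(D((A^*)^k)\bigr)\subset D(A^k)$, i.e. \eqref{control_op_reg}, is the hypothesis of \cite[Theorem 5.1]{ervedoza2010systematic}, which is tailored to \emph{bounded} control operators and is what the paper uses for the \emph{internal} control problem (see the discussion in Subsection \ref{subsec:3.3}). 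In the boundary setting of \eqref{generator_boundary_1} the operator $B$ is unbounded: it maps $L^2(\Gamma)$ into $\{0\}\times H^{-\frac32}(\Omega)$, which is not contained in $H=L^2(\Omega)\times H^{-1}(\Omega)$. Consequently $BB^*$ applied to an element of $D((A^*)^k)$ does not land in $H$ at all, let alone in $D(A^k)$, and the inclusion you propose to verify cannot hold as stated; the analogy with \eqref{prop_to_be_ver} does not transfer.

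The fix is to invoke \cite[Theorem 5.4]{ervedoza2010systematic} instead, whose hypotheses are formulated for admissible unbounded control operators: one needs (i) admissibility of $B$ along the scale of spaces generated by $A$ and $A^*$, and (ii) that $B^*$ maps $D((A^*)^{k+1})$ boundedly into $H^{k+\frac12}(\Gamma)$-type spaces for $k=0,\dots,s-1$. For the wave operator, (ii) reduces to the elliptic-regularity fact that the normal derivative of an $H^{k+1}(\Omega)$ function satisfying the compatibility conditions in \eqref{domain_generator_boundary_-1} belongs to $H^{k+\frac12}(\partial\Omega)$, followed by multiplication by the smooth boundary cut-off $\chi$ --- and here, as you correctly anticipate, no flatness of $\partial\Omega$ is needed because $\chi$ lives intrinsically on the boundary. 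With the hypothesis corrected in this way, the remainder of your outline (HUM and the Gramian on the adjoint scale, the observability inequality commuted through powers of $A^*$, the embedding into $L^\infty((0,T_0);V)$, and the linear bound in $\|y_0\|_{D(A^s)}$) goes through and yields the statement of Definition \ref{contr_2_abstract} for every $T_0>T^*$.
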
One can prove the above Lemma, by employing \cite[Theorem 5.4]{ervedoza2010systematic}.

		\begin{proof}[Proof of Theorem \ref{th_1}]
			We prove our Theorem, by choosing the set of admissible controls:
			\begin{equation*}
			\mathscr{U}_{\mbox{\tiny{ad}}}=\left\{u\in L^2(\Gamma) \ | \ u\geq 0, \ \mbox{a.e.} \ {\Gamma}\right\}.
			\end{equation*}
			
			Hence,
			
			\begin{equation}\label{strict_constrants_1}
			\bigcup_{\sigma >0}\left\{u\in L^2(\Gamma) \ | \ u\geq \sigma, \ \mbox{a.e.} \ {\Gamma}\right\}\subset 	\mathscr{W}.
			\end{equation}
			
			Note that, in order to show \eqref{strict_constrants_1}, it is essential that the embedding\\
			$H^{s(n)-\frac12}(\Gamma)\hookrightarrow C^0(\overline{\Gamma})$ is continuous. This is guaranteed by the choice $s(n)=\lfloor{n/2}\rfloor+1$.

			By Lemma \ref{lemma_smooth_controllability_wave_boundary}, we conclude that smooth controllability holds. At this point, it suffices to apply Theorem \ref{th_6} to conclude.
			\qed\end{proof}

		\subsection{Proof of Theorem \ref{th_4}}
		\label{subsec:4.2}
		
		We prove now Theorem \ref{th_4}.
		\begin{proof}[Proof of Theorem \ref{th_4}]
			We have explained above how our control system \eqref{wave_boundary_1} fits the abstract framework presented in section \ref{sec:2}. Furthermore, by Lemma \ref{lemma_smooth_controllability_wave_boundary}, the system is \textit{Smoothly Controllable} with $s(n)=\lfloor{n/2}\rfloor+1$ and $V=H^{s(n)-\frac12}(\Gamma)$. Moreover, the set $\mbox{int}^V(\mathscr{U}_{\mbox{\tiny{ad}}}\cap V)$ is non empty, since all constants $\sigma>0$ belong to it.
			This is consequence of the continuity of $H^{s(n)-\frac12}(\Gamma)\hookrightarrow C^0(\overline{\Gamma})$, valid for $s(n)=\lfloor{n/2}\rfloor+1$. The result holds as a consequence of Theorem \ref{th_8}.
			\qed
		\end{proof}
		
		\subsection{State Constraints. Proof of Theorem \ref{th_5}}
		\label{subsec:4.3}
		
		We conclude this section proving Theorem \ref{th_5} about state constraints.
		The following result is needed.
		
		\begin{lemma}\label{lemma_boundary}
			Let $s\in \mathbb{N}^*$ and $T>T^*$. Take a steady state solution $\eta_0$ associated to the control $v^0\in H^{s-\frac12}(\Gamma)$. Then, there exists $v\in \cap_{j=0}^{s}C^j([0,T];H^{s-\frac12-j}(\Gamma))$ such that the unique solution $(\eta,\eta_t)$ to \eqref{wave_boundary_1} with initial datum $(\eta_0,0)$ and control $v$ is such that $(\eta(T,\cdot),\eta_t(T,\cdot))=(0,0)$. Furthermore,
			\begin{equation}\label{control_estimate_boundary_1}
			\sum_{j=0}^s\|v\|_{C^j([0,T];H^{s-\frac12-j}(\Gamma))}\leq C(T)\|v^0\|_{H^{s-\frac12}(\Gamma)},
			\end{equation}
			the constant $C$ being independent of $\eta_0$ and $v^0$. Finally, if $s=s(n)=\lfloor{n/2}\rfloor+1$, then the control $v\in C^0([0,T]\times \overline{\Gamma})$ and
			\begin{equation}\label{control_estimate_boundary_2}
			\|v\|_{C^0([0,T]\times\overline{\Gamma})}\leq C(T) \|v^0\|_{H^{s(n)-\frac12}(\Gamma)}.
			\end{equation}
		\end{lemma}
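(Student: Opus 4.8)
The plan is to bootstrap the smooth controllability of the boundary wave system (Lemma \ref{lemma_smooth_controllability_wave_boundary}) from the abstract level to a controllability statement with quantitative regularity in the scale $C^j([0,T];H^{s-\frac12-j}(\Gamma))$, following the strategy of \cite[Theorem 5.4]{ervedoza2010systematic}. First I would reduce to the case of zero initial velocity, which is already assumed, and use the linearity to split the problem: write the desired control as a sum of a ``correction'' piece that handles the steady-state initial datum $\eta_0$ and a piece supplied by abstract null controllability. Concretely, since $\eta_0$ is the steady state associated to $v^0 \in H^{s-\frac12}(\Gamma)$, one has $A \eta_0 + B v^0 = 0$ in the abstract notation of \eqref{generator_boundary_1}; hence the constant-in-time control $v^0$ leaves $\eta_0$ stationary. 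Subtracting a smooth cut-off of this constant control (multiply $v^0$ by $\rho(t)$ as in \eqref{lemma_10_hypo}, so that it agrees with $v^0$ near $t=0$ and vanishes for $t \ge 1/2$) turns the problem into steering an initial datum $0$ to rest, but now with a forcing term supported in $(0,1/2)$ whose time-regularity is controlled by $\|v^0\|_{H^{s-\frac12}(\Gamma)}$. The interior-regularity Lemma \ref{lemma_semigroup_4} then guarantees that at time $t=1$ the resulting state lies in $D(A^s)$ with norm bounded by $C\|v^0\|_{H^{s-\frac12}(\Gamma)}$.

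Next I would invoke the smooth controllability of Lemma \ref{lemma_smooth_controllability_wave_boundary}: from any state in $D(A^s)$ at time $t=1$ there is a control $w \in L^\infty((1,T);V)$, $V = H^{s(n)-\frac12}(\Gamma)$, steering the system to rest at time $T$, with $\|w\|_{L^\infty((1,T);V)} \le C\|\cdot\|_{D(A^s)}$. Gluing the two pieces — $\rho(t)v^0$ on $(0,1)$ and $w$ on $(1,T)$ — gives a control that null-controls the trajectory and obeys the estimate \eqref{control_estimate_boundary_1}, at least in the weaker norm $L^\infty(0,T;H^{s(n)-\frac12}(\Gamma))$. To upgrade this to the full scale $\cap_{j=0}^s C^j([0,T];H^{s-\frac12-j}(\Gamma))$ I would instead run the standard ``transmutation/Russell-type'' or duality construction of \cite{ervedoza2010systematic}: the control is produced by solving the adjoint wave equation with a minimizer of a suitable functional whose coercivity is exactly the smooth observability inequality (Remark following Definition \ref{contr_2_abstract}, part (ii)), and the adjoint solution inherits as many time derivatives as the data permit, each derivative costing half a unit of boundary regularity via the hidden regularity / trace estimates for the wave equation (this is where the condition $c(x) > -\lambda_1$ and the choice $V = H^{s(n)-\frac12}(\Gamma)$ enter). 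The constant $C(T)$ then depends only on $T$ (and on $\Omega$, $\Gamma$, $c$), not on $\eta_0$ or $v^0$, because the whole construction is linear in the data.

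Finally, for the last assertion, with $s = s(n) = \lfloor n/2\rfloor + 1$, I would simply apply the Sobolev embedding $H^{s(n)-\frac12}(\Gamma) \hookrightarrow C^0(\overline{\Gamma})$ on the first factor ($j=0$) of the estimate \eqref{control_estimate_boundary_1}, which is continuous precisely for this value of $s(n)$ since $\dim \Gamma = n-1$ and $s(n) - \frac12 > \frac{n-1}{2}$. This yields $v \in C^0([0,T]\times\overline{\Gamma})$ together with \eqref{control_estimate_boundary_2}. The main obstacle I anticipate is the bookkeeping in the bootstrap step: one must carefully track, for each time-derivative of the adjoint state, the loss of exactly $\frac12$ boundary derivative and verify that the compatibility conditions at $t=0$ and $t=T$ (vanishing of enough time-derivatives of the data, guaranteed by the $\rho$-cutoff and by controlling to the \emph{steady} — hence arbitrarily smooth-in-time — null state) are met so that no additional regularity is lost at the temporal endpoints; this is routine but delicate, and is the reason the statement is isolated as a separate lemma rather than folded into the proof of Theorem \ref{th_5}.
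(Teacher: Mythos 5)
Your proposal is correct and follows essentially the same route as the paper, which disposes of this lemma by the ``techniques of Lemma \ref{lemma_10}'': switch off the steady control via the cut-off $\rho$, use Lemma \ref{lemma_semigroup_4} to land in $D(A^s)$ at the end of that phase, then invoke the smooth controllability in its quantitative Ervedoza--Zuazua form to obtain the $C^j([0,T];H^{s-\frac12-j}(\Gamma))$ regularity, and conclude the last assertion from the embedding $H^{s(n)-\frac12}(\Gamma)\hookrightarrow C^0(\overline{\Gamma})$. The only cosmetic adjustment is to support the cut-off in $(0,\varepsilon)$ with $\varepsilon<T-T^*$, so that the remaining control phase still has length greater than $T^*$ for every $T>T^*$ as stated.
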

		
		The above Lemma can be proved by using the techniques of Lemma \ref{lemma_10}. We now prove our Theorem about \textit{state constraints}.
		
		\begin{proof}[of Theorem \ref{th_5}]

			\textit{Step 1} \ \textbf{Consequences of Lemma \ref{lemma_boundary}.}
			
			Let $T_0>T^*$, $T^*$ being the critical time given by the \textit{Geometric Control Condition}. By Lemma  \ref{lemma_boundary}, for any $\varepsilon >0$, there exists $\delta_{\varepsilon} >0$ such that for any pair of steady states $y_0$ and $y_1$ defined by regular controls $\overline{u}^i\in H^{s(n)-\frac12}(\Gamma)$, such that:
			\begin{equation}\label{th5_smallness_condition}
			\|\overline{u}^1-\overline{u}^0\|_{H^{s(n)-\frac12}(\Gamma)	}<\delta_{\varepsilon}
			\end{equation}
			we can find a control $u$ driving \eqref{state_equation_abstract} from $y_0$ to $y_1$ in time $T_0$ and verifying
			\begin{equation}\label{th5_linf_est_control_eps}
			\sum_{j=0}^{s(n)}\|u-\overline{u}^1\|_{C^j([0,T_0];H^{s(n)-\frac12 -j}(\Gamma))}<\varepsilon,
			\end{equation}
			where $\overline{u}^1$ is the control defining $y_1$. Moreover, if $(y,y_t)$ is the unique solution to \eqref{wave_boundary_1} with initial datum $(y_0,0)$ and control $u$, we have
			\begin{equation*}
			\|y-y_1\|_{C^0([0,T_0]\times \overline{\Omega})}\leq C\|y-y_1\|_{C^0([0,T_0];H^{s(n)}(\Omega))}
			\end{equation*}
			\begin{equation*}
			\leq C\sum_{j=0}^{s(n)}\|u-\overline{u}^1\|_{C^j([0,T_0];H^{s(n)-\frac12 -j}(\Gamma))}\leq C\varepsilon,
			\end{equation*}
			where we have used the boundedness of the inclusion $H^{s(n)}(\Omega)\hookrightarrow C^0(\overline{\Omega})$ and the continuous dependence of the data
			.
			\\
			\textit{Step 2} \ \textbf{Stepwise procedure and conclusion.}\\
			We consider the convex combination $\gamma(s)=(1-s)y_0+sy_1$. Then, let
			\begin{equation*}
			z_k=\gamma\left(\frac{k}{\overline{n}}\right),\quad k=0,\dots,\overline{n}
			\end{equation*}
			be a finite sequence of steady states defined by the control $\overline{u}_k=\frac{\overline{n}-k}{\overline{n}} \hspace{0.01 cm} \overline{u}^0+\frac{k}{\overline{n}} \hspace{0.01 cm} \overline{u}^1$. Let $\delta >0$. By taking $\overline{n}$ sufficiently large,
			\begin{equation}\label{th5_eq3}
			\|\overline{u}_{k}-\overline{u}_{k-1}\|_{H^{s(n)-\frac12}(\Gamma)}<\delta.
			\end{equation}
			By the above reasonings, choosing $\delta$ small enough, for any $1\leq k\leq \overline{n}$, we can find a control $u^k$ joining the steady states $z_{k-1}$ and $z_{k}$ in time $T_0$, with
			\begin{equation*}
			\|y^k-z_k\|_{C^0([0,T_0]\times \overline{\Omega})}\leq \sigma,
			\end{equation*}
			where $(y^k,(y^k)_t)$ is the solution to \eqref{wave_boundary_1} with initial datum $z_{k-1}$ and control $u^k$. Hence,
			\begin{equation}\label{th5_positivenss_contr_onestep_2}
			y^k=y^k-z_k+z_k\geq -\sigma+\sigma=0,\quad\mbox{on}\ (0,T_0)\times \Omega,
			\end{equation}
			where we have used the maximum principle for elliptic equations (see \cite{BFP}) to assert that $z^k\geq \sigma$ because $u_k\geq \sigma$.
			
			By taking the traces in \eqref{th5_positivenss_contr_onestep_2}, we have $u^k\geq 0$ for $1\leq k\leq \overline{n}$.
			
			In conclusion, the control
			$
			u:(0,\overline{n}T_0)\longrightarrow H^{s(n)-\frac12}(\Gamma)
			$
			defined as
			$u(t)=u_k(t-(k-1)T_0)$ for $t\in ((k-1)T_0,k T_0)$ is the required one. This finishes the proof.
			\qed\end{proof}
		
		\section{The one dimensional wave equation}
		\label{sec:5}
		
		We consider the one dimensional wave equation, controlled from the boundary
		\begin{equation}\label{wave_1_d_boundary_1}
		\begin{cases}
		y_{tt}-y_{xx}=0\hspace{0.6 cm} & (t,x)\in (0,T)\times (0,1)\\
		y(t,0)=u_0(t),\ y(t,1)=u_1(t)  & t\in(0,T)\\
		y(0,x)=y^0_0(x), \ y_t(0,x)=y^1_0(x).  & x\in (0,1)\\
		\end{cases}
		\end{equation}
		As in the general case, by transposition (see \cite{lions1988exact}), for any initial datum
		$(y_0^0,y_0^1)\in L^2(0,1)\times H^{-1}(0,1)$ and controls $u_i\in L^2(0,T)$, the above problem admits an unique solution $(y,y_t)\in C^0([0,T];L^2(0,1)\times H^{-1}(0,1))$.
		
		We show how Theorem \ref{th_4} reads in this one-dimensional setting, in the special case where both the initial trajectory $(\overline{y}_0,(\overline{y}_0)_t)$ and the final one $(\overline{y}_1,(\overline{y}_1)_t)$ are constant (independent of $x$) steady states.
		
		We determine explicitly a pair of \textit{nonnegative} controls steering \eqref{wave_1_d_boundary_1} from one positive constant to the other. The controlled solution remains \textit{nonnegative}.
		
		In this special case, we show further that
		\begin{itemize}
			\item the minimal controllability time is the same, regardless whether we impose the positivity constraint on the control or not;
			\item constrained controllability holds in the minimal time.
		\end{itemize}
		
		The minimal controllability time for \eqref{wave_1_d_boundary_1} is defined as follows.
		
		Let $(y_0^0,y_0^1)\in L^2(0,1)\times H^{-1}(0,1)$ be an initial datum and $(y_1^0,y_1^1)\in L^2(0,1)\times H^{-1}(0,1)$ be a final target. Then the minimal controllability time without constraints is defined as follows:
		\begin{equation}\label{def_min_time_unconstrained}
		T_{\mbox{\tiny{min}}} \overset{{\tiny \mbox{def}}}{=} \inf\left\{T>0 \ \big| \ \exists u_i\in L^{2}(0,T), \  (y(T,\cdot),y_t(T,\cdot))=(y_1^0,y_1^1)\right\}.
		\end{equation}
		Similarly, the minimal time under \textit{positivity} constraints on the \textit{control} is defined as:
		\begin{equation}\label{def_min_time_constrained_control}
		T_{\mbox{\tiny{min}}}^c \overset{{\tiny \mbox{def}}}{=} \inf\left\{T>0 \ \big| \ \exists u_i\in L^{2}(0,T)^{+}, \  (y(T,\cdot),y_t(T,\cdot))=(y_1^0,y_1^1)\right\}.
		\end{equation}
		Finally, we introduce the minimal time with constraints on the state and and the control:
		\begin{equation}\label{def_min_time_constrained_control&state}
		T_{\mbox{\tiny{min}}}^s \overset{{\tiny \mbox{def}}}{=} \inf\left\{T>0 \ \big| \ \exists u_i\in L^{2}(0,T)^{+}, \  (y(T,\cdot),y_t(T,\cdot))=(y_1^0,y_1^1), \ y\geq 0\right\}.
		\end{equation}
		
		\begin{figure}[htp]
			\begin{center}
				\includegraphics[width=6cm]{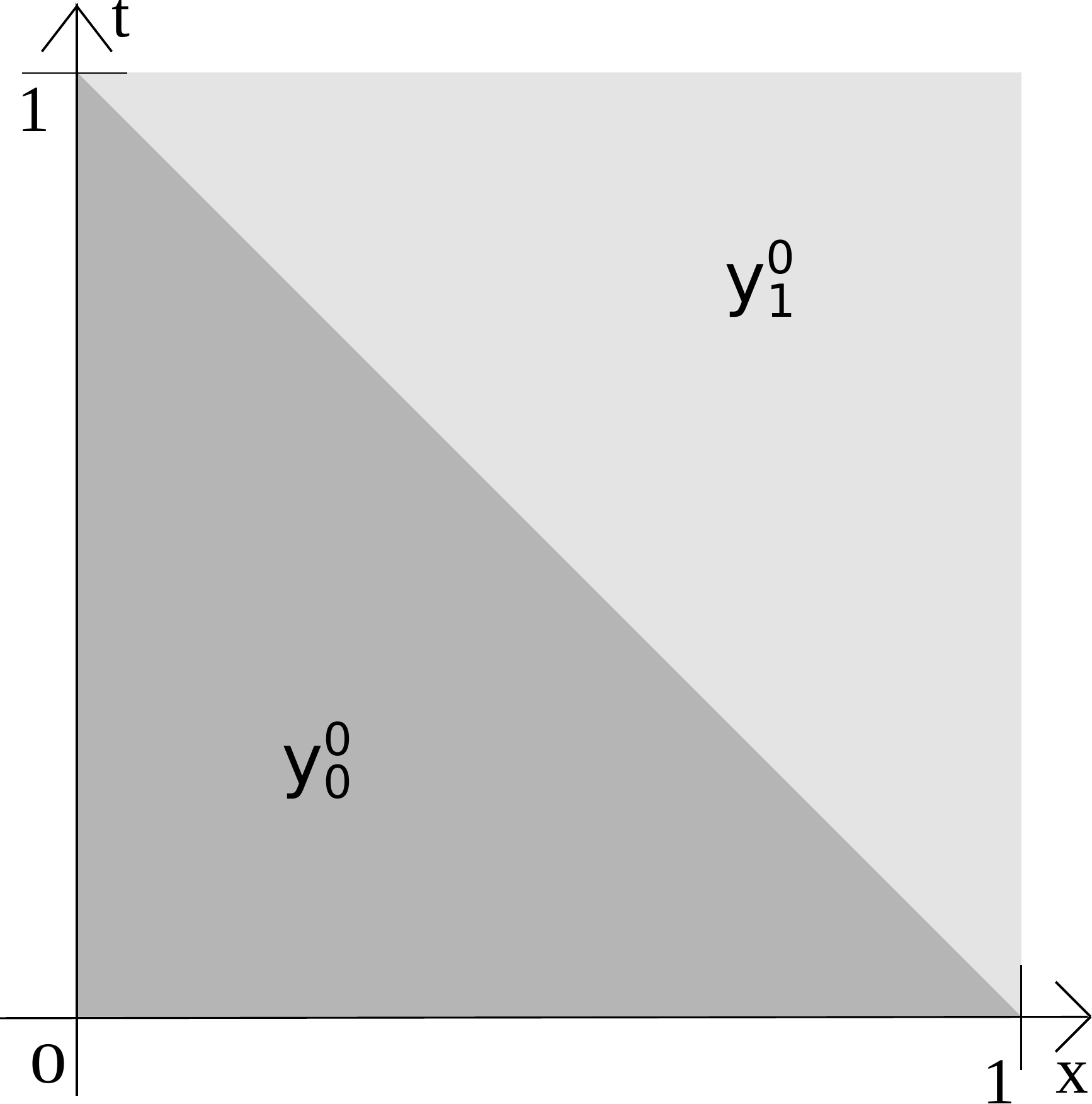}\\
				\caption{Level sets of the solution to \eqref{wave_1_d_boundary_1} with initial datum $(y_0^0,0)$ and controls $\hat{u}^i$. In the darker region the solution takes value $y_0^0$, while in the complement it coincides with $y_1^0$.}\label{contr minimal one d}
			\end{center}
		\end{figure}
		
		The problem of controllability of the one-dimensional  wave equation under bilateral constraints on the control has been studied in \cite{gugat2011optimal}. In the next Proposition, we concentrate on unilateral constraints and we compute explicitly the minimal time for the specific data considered.
		
		\begin{proposition}\label{prop_one_dimensional_1}
			Let $(y_0^0,0)$ be the initial datum and $(y_1^0,0)$ be the final target, with $y_0^0\in \mathbb{R}^+$ and $y_1^0\in\mathbb{R}^+$. Then,
			\begin{enumerate}
				\item for any time $T>1$, there exists two \textit{nonnegative} controls
				\begin{equation}\label{one_dimensional_controls}u_0(t)=
				\begin{cases}
				y_0^0 \quad & t\in [0,1)\\
				(y_1^0-y_0^0)\frac{t-1}{T-1}+y_0^0 \quad & t\in (1,T]\\
				\end{cases}
				\end{equation}
				\begin{equation}\label{one_dimensional_controls_2}
				u_1(t)=
				\begin{cases}
				(y_1^0-y_0^0)\frac{t}{T-1}+y_0^0 \quad & t\in [0,T-1)\\
				y_1^0 \quad & t\in [T-1,T]\\
				\end{cases}
				\end{equation}
				driving \eqref{wave_1_d_boundary_1} from $(y_0^0,0)$ to $(y_1^0,0)$ in time $T$. Moreover, the corresponding solution remains \textit{nonnegative}, i.e.
				\begin{equation*}
				y(t,x)\geq 0,\quad\forall (t,x)\in [0,T]\times [0,1].
				\end{equation*}
				\item $T_{\mbox{\tiny{min}}}^s=T_{\mbox{\tiny{min}}}^c=T_{\mbox{\tiny{min}}}=1$;
				\item the \textit{nonnegative} controls $\hat{u}_0\equiv y_0^0$ and $\hat{u}_1\equiv y_1^0$ in $L^2(0,1)$ steers \eqref{wave_1_d_boundary_1} from $(y_0^0,0)$ to $(y_1^0,0)$ in the minimal time. Furthermore, the corresponding solution $y\geq 0$ a.e. in $(0,1)\times (0,1)$;
				\item the controls in the minimal time are \textit{not unique}. In particular, for any $\lambda\in [0,1]$, $\hat{u}^0_{\lambda}=(1-\lambda)y_0^0+\lambda y_1^0$ and $\hat{u}^1_{\lambda}=(1-\lambda)y_1^0+\lambda y_0^0$ drives \eqref{wave_1_d_boundary_1} from $(y_0^0,0)$ to $(y_1^0,0)$ in the minimal time.
			\end{enumerate}
		\end{proposition}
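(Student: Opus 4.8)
The plan is to rely throughout on d'Alembert's representation. Every transposition solution of $y_{tt}-y_{xx}=0$ on $(0,1)$ has the form $y(t,x)=p(x+t)+q(x-t)$; for the data $(y_0^0,0)$ with $y_0^0$ constant the initial conditions force $p\equiv a$ and $q\equiv b$ on $(0,1)$ for some constants $a,b$ with $a+b=y_0^0$, while the boundary conditions propagate $p$ to $(1,+\infty)$ and $q$ to $(-\infty,0)$ through the characteristic relations $p(t)+q(-t)=u_0(t)$ and $p(1+t)+q(1-t)=u_1(t)$ for $t>0$. All four items are then read off from this picture (see Figure \ref{contr minimal one d}).

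\emph{Item (1).} I would exhibit the candidate solution
\[
y(t,x)=\begin{cases} y_0^0 & \text{if } x+t\le 1,\\[2pt] u_1(x+t-1) & \text{if } x+t>1,\end{cases}
\]
which is of the form $p(x+t)+q(x-t)$ with $q\equiv b$ constant, hence solves the wave equation, and check that it matches the data of \eqref{wave_1_d_boundary_1}: the initial conditions hold because $u_1(0)=y_0^0$; the boundary condition at $x=1$ is immediate; and the boundary condition at $x=0$ reduces to the elementary identity $u_0(t)=u_1(t-1)$ for $t\in(1,T)$, verified by substituting the explicit linear formulas. Since solutions are unique, this \emph{is} the solution. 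Evaluating at $t=T$ gives $y(T,x)=u_1(x+T-1)=y_1^0$ (because $x+T-1\in[T-1,T]$, where $u_1\equiv y_1^0$) and $y_t(T,\cdot)=0$; and $u_0$, $u_1$ together with every value taken by $y$ lie in $[\min(y_0^0,y_1^0),\max(y_0^0,y_1^0)]\subset(0,+\infty)$, which gives all the non-negativity claims.

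\emph{Items (3) and (4).} With constant controls $u_0\equiv c_0$, $u_1\equiv c_1$ the propagation relations make $p$ and $q$ piecewise constant, so $y$ takes only the four values $a+b=y_0^0$, $c_0$, $c_1$ and $c_0+c_1-y_0^0$, one on each of the four characteristic triangles tiling $(0,1)\times(0,1)$; at $t=1$ one sits on the ``top'' triangle, where $y\equiv c_0+c_1-y_0^0$ and $y_t\equiv 0$. Taking $(c_0,c_1)=(y_0^0,y_1^0)$, respectively $(c_0,c_1)=\big((1-\lambda)y_0^0+\lambda y_1^0,\ (1-\lambda)y_1^0+\lambda y_0^0\big)$ with $\lambda\in[0,1]$, yields $c_0+c_1-y_0^0=y_1^0$, so the target $(y_1^0,0)$ is hit at time $1$, and the four values are convex combinations of the positive numbers $y_0^0,y_1^0$, so $y\ge0$. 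Distinct $\lambda$ give distinct admissible pairs, proving non-uniqueness in minimal time.

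\emph{Item (2) and the main obstacle.} Enlarging the set of constraints only shrinks the admissible controls, so $T_{\min}\le T_{\min}^{c}\le T_{\min}^{s}$ is automatic, and item (3) already gives $T_{\min}^{s}\le 1$; hence everything reduces to $T_{\min}\ge 1$ (for $y_0^0\neq y_1^0$; the degenerate case $y_0^0=y_1^0$, where all three minimal times are $0$, must be excluded). I would argue by contradiction: if $T<1$ and some $L^2$ controls steer $(y_0^0,0)$ to $(y_1^0,0)$ in time $T$, then, writing $y=p(x+t)+q(x-t)$, the target conditions read $p(x+T)+q(x-T)=y_1^0$ in $L^2(0,1)$ and $p'(x+T)-q'(x-T)=0$ in $H^{-1}(0,1)$; differentiating the first in $x$ and combining with the second forces $p'\equiv0$ on $(T,1+T)$ and $q'\equiv0$ on $(-T,1-T)$ in the sense of distributions; since $T<1$ the sub-intervals $(T,1)$ and $(0,1-T)$ are non-empty and there $p\equiv a$, $q\equiv b$ by the initial data, so in fact $p\equiv a$ on $(T,1+T)$ and $q\equiv b$ on $(-T,1-T)$, whence $y(T,\cdot)\equiv a+b=y_0^0\neq y_1^0$, a contradiction. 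The delicate point here — the main obstacle — is precisely this last step: the naive finite-speed-of-propagation argument only yields $T_{\min}\ge \tfrac12$, and one must genuinely use the zero-velocity requirement $y_t(T,\cdot)=0$, treating carefully the Dirac masses that jumps in the controls produce in $y_t(T,\cdot)$, in order to push the bound up to $1$; this is exactly what the distributional identities ``$p'\equiv0$, $q'\equiv0$'' accomplish.
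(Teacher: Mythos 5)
Your proposal is correct, and for items (1), (3) and (4) it follows essentially the same route as the paper: d'Alembert's representation, the explicit left-moving profile $f(x+t)$ for item (1), and the explicit piecewise-constant solution on the characteristic triangles for the minimal-time controls. (You in fact do slightly more than the paper here: the paper's Step 3 only writes out the solution for the controls of item (3), whereas your four-triangle computation with general constant boundary values $c_0,c_1$ handles item (4) in the same stroke.) The genuine divergence is in item (2). The paper obtains $T_{\min}\le T_{\min}^c\le T_{\min}^s\le 1$ exactly as you do, but for the lower bound $T_{\min}\ge 1$ it simply refers the reader to the techniques of an external reference (Proposition 4.1 of the cited work of Loh\'eac--Zuazua), while you supply a self-contained argument: writing $y=p(x+t)+q(x-t)$, combining the distributional identities $p'(\cdot+T)+q'(\cdot-T)=0$ (from $y(T,\cdot)=y_1^0$) and $p'(\cdot+T)-q'(\cdot-T)=0$ (from $y_t(T,\cdot)=0$) to force $p$ and $q$ to be constant on $(T,1+T)$ and $(-T,1-T)$, and then using $T<1$ to match these constants with the initial data. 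This is exactly the right mechanism — as you note, finite speed of propagation alone only gives $1/2$, and it is the zero-velocity condition, handled distributionally so that jump discontinuities of the controls are accounted for, that pushes the bound to $1$. Your observation that the case $y_0^0=y_1^0$ must be excluded (all three minimal times are then $0$) is also a legitimate point that the paper's statement and proof gloss over. The only cosmetic slip is the inclusion $[\min(y_0^0,y_1^0),\max(y_0^0,y_1^0)]\subset(0,+\infty)$: with the paper's convention $\mathbb{R}^+=[0,+\infty)$ the interval need only lie in $[0,+\infty)$, which is all the nonnegativity claims require.
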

		\begin{proof}
			We proceed in several steps.
			
			\textit{Step 1.} \ \textbf{Proof of the constrained controllability in time $T>1$.}\\
			By D'Alembert's formula, the solution $(y,y_t)$ to \eqref{wave_1_d_boundary_1} with initial datum $(y_0^0,0)$ and controls $u_i$ defined in \eqref{one_dimensional_controls} and \eqref{one_dimensional_controls_2}, reads as
			\begin{equation*}
			y(t,x)=f(x+t),\quad (t,x)\in [0,T]\times [0,1],
			\end{equation*}
			where
			\begin{equation*}f(\xi)=
			\begin{cases}
			y_0^0 \quad & \xi\in [0,1)\\
			(y_1^0-y_0^0)\frac{\xi-1}{T-1}+y_0^0 \quad & \xi\in [1,T)\\
			y_1^0. \quad & \xi\in [T,T+1].\\
			\end{cases}
			\end{equation*}
			This finishes the proof of $(1.)$.\\
			\textit{Step 2} \ \textbf{Computation of the minimal time.}\\
			In any time $T>1$, controllability under state and control constraints holds. Then,  $T_{\mbox{\tiny{min}}}\leq T_{\mbox{\tiny{min}}}^c\leq T_{\mbox{\tiny{min}}}^s\leq 1$.
			
			It remains to prove that $T_{\mbox{\tiny{min}}}\geq 1$. This can be obtained by adapting the techniques of \cite[Proposition 4.1]{loheac2016norm}.\\
			\textit{Step 3} \ \textbf{Controllability in the minimal time.}\\
			One can check (see figure \ref{contr minimal one d}) that the unique solution to \eqref{wave_1_d_boundary_1} with initial datum $(y_0^0,0)$ and controls $\hat{u}^i$ is
			\begin{equation}\label{one_dimensional_controls_3}y(t,x)=
			\begin{cases}
			y_0^0 \quad & t+x<1\\
			y_1^0 \quad & t+x>1\\
			\end{cases}
			\end{equation}
			This concludes the argument.
			\qed
		\end{proof}

		\section{Conclusions and open problems}
		\label{sec:6}
		
		In this paper we have analyzed the controllability of the wave equation under \textit{positivity} constraints on the control and on the state.
		\begin{enumerate}
			\item In the general case (without assuming that the energy defines a norm), we have shown how to steer the wave equation from one steady state to another in time large, provided that both steady states are defined by positive controls, away from zero;
			\item in case the energy defines a norm, we have generalized the above result to data lying on trajectories. Furthermore, the controls defining the trajectory are supposed to be only nonnegative, thus allowing us to take as target $(y_1^0,y_1^1)=(0,0)$.
		\end{enumerate}
		
		We present now some open problems, which as long as we know, have not been treated in the literature so far.
		
		\begin{itemize}
			\item Further analysis of controllability of the wave under \textit{state} constraints. As pointed out in \cite{HCC} and \cite{pighin2017controllability}, in the case of parabolic equations a state constrained result follows from a control constrained one by means of the comparison principle. For the wave equation, such principle does not hold. We have proved Theorem \ref{th_5},  using a ``stair-case argument'' but further analysis is required.
			\item On the minimal time for constrained controllability. Further analysis of the minimal constrained controllability time is required. In particular,  it would be interesting to compare the minimal constrained controllability time and the unconstrained one for any choice of initial and final data. As we have seen in Proposition \ref{prop_one_dimensional_1}, they coincide for constant steady data in one space dimension.
			\item In the present paper, we have determined nonnegative controls by employing results of controllability of smooth data by smooth controls. This imposes a restriction to our analysis: the action of the control is localized by smooth cut-off functions. In particular, when controlling \eqref{wave_internal_1} from an interior subset touching the boundary, we encounter the issues discussed in subsection \ref{subsec:3.3} and already pointed out in \cite{dehman2009analysis} and \cite{ervedoza2010systematic}.
			
			Then, it would be worth to be able to build nonnegative controls without using smooth controllability.
			\item Derive the Optimality System (OS) for the controllability of the wave by nonnegative controls.
			\item Extend our results to the semilinear setting, by employing the analysis carried out in \cite[Theorem 1.3]{cannarsa1999well}, \cite{cannarsa2002one}, \cite{cannarsa1999controllability} and  \cite{zuazua1993exact}.
			\item Extend the results to more general classes  of potentials  $c$. For instance, one could assume $c$ to be bounded, instead of $C^{\infty}$ smooth.
		\end{itemize}

		\section*{Appendix}
		\addcontentsline{toc}{section}{Appendix}

		\subsection*{Regularity results}
		In what follows, $H$ is a real Hilbert space and $A:D(A)\subset H\longrightarrow H$ is a generator of a $C^0$-semigroup.
		
		\begin{lemma}\label{lemma_semigroup_1}
			Let $k\in\mathbb{N}$. Take $y\in C^k([0,T];H)\cap H^{k+1}((0,T);H_{-1})$ solution to the homogeneous equation:
			\begin{equation}\label{eq_semigrpoup_1}
			\frac{d}{dt}y=Ay, \quad  t\in (0,T).
			\end{equation}
			Then, $y\in \cap_{j=0}^k C^j([0,T];D(A^{k-j}))$ and
			\begin{equation*}
			\sum_{j=0}^k\|y\|_{C^j([0,T];D(A^{k-j}))}\leq C(k)\|y\|_{C^k([0,T];H)},
			\end{equation*}
			the constant $C(k)$ depending only on $k$.
		\end{lemma}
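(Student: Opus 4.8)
The plan is to run a finite induction that trades, one step at a time, one order of time‑differentiability for one application of $A$; the engine is the repeatedly differentiated equation, and the only real care needed is the bookkeeping of which space each object lives in.

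\emph{Step 1: differentiating the equation.} Since $y\in C^{k}([0,T];H)$, the time derivatives $y^{(m)}$ exist in $H$ for $0\le m\le k$, with $y^{(m)}\in C^{k-m}([0,T];H)$. In $\dot y=Ay$ the operator on the right is the bounded extension $A\in\mathscr L(H,H_{-1})$; because $A$ is bounded into $H_{-1}$ and $y\in C^{1}([0,T];H)$, one has $\tfrac{d}{dt}(Ay)=A\dot y$ in $H_{-1}$, and iterating this (the hypothesis $y\in H^{k+1}((0,T);H_{-1})$ ensuring all the derivatives in play are meaningful) gives $y^{(m+1)}=Ay^{(m)}$ in $H_{-1}$ for $0\le m\le k-1$. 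Now $y^{(m+1)}(t)\in H$, so the element $Ay^{(m)}(t)$, a priori only in $H_{-1}$, actually lies in $H$; by the construction of $H_{-1}$ and of the extension of $A$ this is precisely the statement that $y^{(m)}(t)\in D(A)$ with $Ay^{(m)}=y^{(m+1)}$. An immediate induction then yields $y^{(m)}(t)=A^{m}y(t)$ for all $0\le m\le k$; in particular $y(t)\in D(A^{k})$, $y^{(j)}(t)\in D(A^{k-j})$, and
\begin{equation*}
A^{l}y^{(j)}=y^{(j+l)}\qquad\text{whenever } l+j\le k .
\end{equation*}

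\emph{Step 2: repackaging and the estimate.} Fix $l\in\{0,\dots,k\}$. By Step 1, each map $t\mapsto A^{m}y(t)=y^{(m)}(t)$ ($0\le m\le l$) lies in $C^{k-m}([0,T];H)\subseteq C^{k-l}([0,T];H)$, and for $0\le p\le k-l$ its $p$‑th time derivative equals $y^{(m+p)}=A^{m}y^{(p)}$, which is continuous into $H$ because $m+p\le l+(k-l)=k$. Since differentiability up to order $k-l$ of $t\mapsto y(t)$ as a map into the graph space $D(A^l)$ is equivalent to simultaneous differentiability of that order, with matching derivatives, of all the maps $t\mapsto A^{m}y(t)$, $0\le m\le l$, into $H$, this gives $y\in C^{k-l}([0,T];D(A^l))$. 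Equipping $D(A^l)$ with the norm $\|x\|_{D(A^l)}^2=\sum_{m=0}^{l}\|A^{m}x\|_H^2$ (equivalent to the graph norm of $A^l$), one has, for $0\le p\le k-l$,
\begin{equation*}
\|y^{(p)}(t)\|_{D(A^l)}^{2}=\sum_{m=0}^{l}\|A^{m}y^{(p)}(t)\|_{H}^{2}=\sum_{m=0}^{l}\|y^{(m+p)}(t)\|_{H}^{2}\le\sum_{r=0}^{k}\|y^{(r)}(t)\|_{H}^{2},
\end{equation*}
hence $\|y\|_{C^{k-l}([0,T];D(A^l))}\le C(k)\,\|y\|_{C^{k}([0,T];H)}$ with $C(k)$ purely combinatorial. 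Taking $l=k-j$ and summing over $j=0,\dots,k$ gives $y\in\bigcap_{j=0}^{k}C^{j}([0,T];D(A^{k-j}))$ together with the claimed bound.

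\emph{Main obstacle.} The entire substance is in Step 1: making the formal identity $y^{(m+1)}=A^{m+1}y$ legitimate while $A$ is unbounded. The disciplined route is to keep track that $y^{(m)}$ lives in $H$ but $Ay^{(m)}$ a priori only in $H_{-1}$, to commute $A$ past $\tfrac{d}{dt}$ using the boundedness of $A\in\mathscr L(H,H_{-1})$, and then to invoke that an element of $H_{-1}$ which happens to sit in $H$ is exactly the $A$‑image of an element of $D(A)$; once that bookkeeping is in place, Step 2 and the norm estimate are routine.
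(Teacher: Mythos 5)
Your argument is correct and follows exactly the route the paper indicates (the paper only remarks that the lemma "can be done by using the equation" and cites a reference): you bootstrap regularity by repeatedly differentiating $\frac{d}{dt}y=Ay$, using that an $x\in H$ with extended image $Ax\in H$ must lie in $D(A)$, to obtain $y^{(m)}=A^{m}y$ and hence $y^{(j)}(t)\in D(A^{k-j})$, after which the graph-norm estimate is immediate. The bookkeeping between $H$ and $H_{-1}$ in your Step 1 is exactly the point that needs care, and you handle it correctly.
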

		
		The proof of the above Lemma can be done by using the equation \eqref{eq_semigrpoup_1} (see \cite{BFP}).
		
		We prove now Lemma \ref{lemma_semigroup_4}.

		\begin{proof}[Proof of Lemma \ref{lemma_semigroup_4}]
			\textit{Step 1} \ \textbf{Time regularity}\\
			By induction on $j=0,\dots,k$, we prove that $y\in C^{j}([0,T];H)$ and
			\begin{equation*}
			\|y\|_{C^j([0,T];H)}\leq C\|f\|_{H^j((0,T);H)}.
			\end{equation*}
			For $j=0$, the validity of the assertion is a consequence of classical semigroup theory (e.g. \cite[Proposition 4.2.5]{OCO} with control space $U=H$ and control operator $B=Id_{H}$). Assume now that the result hold up to $j-1$. Then, let $w$ solution to
			\begin{equation}\label{lemma_semigroup_4_eq3}
			\begin{cases}
			\frac{d}{dt}w=Aw+f^{\prime}& t\in (0,T)\\
			w(0)=0.\\
			\end{cases}
			\end{equation}
			By induction assumption, $w\in C^{j-1}([0,T];H)$ and the corresponding estimate holds. Then, $\tilde{y}(t)=\int_0^t w(\sigma) d\sigma\in C^j([0,T];H)$ and
			\begin{equation*}
			\|\tilde{y}\|_{C^j([0,T];H)}\leq C\|f\|_{H^j((0,T);H)}.
			\end{equation*}
			Then, it remains to show that $y=\tilde{y}$. Now, for any $t\in [0,T]$
			\begin{equation*}
			\tilde{y}(t)-\tilde{y}(0)=\int_0^t[w(\sigma)-w(0)]d\sigma =\int_0^t\int_0^{\sigma}[Aw(\xi)+f^{\prime}(\xi)]d\xi d\sigma
			\end{equation*}
			\begin{equation*}
			=\int_0^t[A\tilde{y}(\sigma)+f(\sigma)]d\sigma.
			\end{equation*}
			By uniqueness of solution to \eqref{state_equation_2}, we have $y=\tilde{y}$. This finishes the first step.\\
			\textit{Step 2} \ \textbf{Conclusion}\\
			We start observing that $y$ solves
			\begin{equation*}
			y_t=Ay,\qquad t\in (\tau,T).
			\end{equation*}
			Then, by classical semigroup arguments (see \cite[Chapter 7]{BFP}), we conclude.
			\qed
		\end{proof}

		\subsection*{Proof of Lemma \ref{lemma_10}}
		
		We give the proof of Lemma \ref{lemma_10}.
		
		\begin{proof}[Proof of Lemma \ref{lemma_10}]
			Let $v$ be given by \eqref{control_noreg_smooth}. The proof is made of two steps.
			\\
			\textit{Step 1} \ \textbf{Show that $y(1;\eta_0,v)\in D(A^s)$, with $s$ given by \eqref{s and V}}
			\\
			We apply Lemma \ref{lemma_semigroup_4} with $y=y(\cdot;\eta_0,\rho\overline{v}^0)-\rho \eta_0$ and $f=\rho^{\prime}\eta_0$, getting 
			\begin{equation*}
			y(1;\eta_0,\rho\overline{v}^0)-\rho\eta_0\in D(A^s).
			\end{equation*}
			Since $\rho\eta_0=0$ over $(\delta,1)$, for some $\delta\in (0,1)$, we have that 
			$$y(1;\eta_0,\rho\overline{v}^0)\in D(A^s).$$
			\textit{Step 2} \ \textbf{Conclusion}\\
			Since $y(1;\eta_0,\rho\overline{v}^0)\in D(A^s)$, we are in position to apply the smooth controllability (see Definition \ref{contr_2_abstract}) and determine $w\in L^{\infty}((1,T_0+1);V)$ steering the solution to \eqref{state_equation_abstract} from $y(1;\eta_0,v)$ at time $t=1$ to $0$ at time $t=T_0+1$.\\
			Hence, the desired control $v$ reads as \eqref{control_noreg_smooth}.
			
			Finally, by similar reasonings the estimate \eqref{control_norm_estimate_1} follows. This ends the proof of this Lemma.
			\qed
		\end{proof}
		
		\subsection*{Proof of Lemma \ref{lemma_stair_case_6}}
		
		We prove now Lemma \ref{lemma_stair_case_6}.
		
		\begin{proof}[Proof of Lemma \ref{lemma_stair_case_6}]
			We split the proof in two steps.\\
			\textit{Step 1} \ \textbf{Proof of the inequality $\|\mathbb{T}_t\|_{\mathscr{L}(D(A^s),D(A^s))}\leq 1$ with $t\in \mathbb{R}^+$}\\
			Recall that
			\begin{equation*}
			\|x\|_{D(A^s)}^2=\sum_{j=0}^s\|A^jx\|_H^2\quad\forall \ x\in D(A^s).
			\end{equation*}
			Now, for any $x\in D(A^s)$ and $t\in\mathbb{R}^+$, we have
			\begin{equation*}
			\|A^j\mathbb{T}_tx\|_H=\|\mathbb{T}_tA^jx\|_H\leq \|A^jx\|_H\quad\forall \ j=0,\dots,s.
			\end{equation*}
			This yields $\|\mathbb{T}_t\|_{\mathscr{L}(D(A^s),D(A^s))}\leq 1$ for any $t\in\mathbb{R}^+$.\\
			\textit{Step 2} \ \textbf{Conclusion.}\\
			Let $C>0$ be given by \eqref{contr_2_abstract}. Take
			\begin{equation}\label{lemma_stair_case_6_1}
			N>\frac{C\|\eta_0\|_{D(A^s)}}{\varepsilon}.
			\end{equation}
			Arbitrarily fix $k\in\left\{0,\dots,N-1\right\}$. Consider the following equation
			\begin{equation}\label{lemma_stair_case_6_2}
			\begin{cases}
			\frac{d}{dt}y(t)=Ay(t)+B\chi_{(kT_0,(k+1)T_0)}(t)u_k(t)& t\in \mathbb{R}^+\\
			y(0)=\frac{1}{N}\eta_0,\\
			\end{cases}
			\end{equation}
			where $\chi_{(kT_0,(k+1)T_0)}$ is the characteristic function of the set $(kT_0,(k+1)T_0)$ and $u_k\in L^2(\mathbb{R}^+,V)$. From step 1 and \eqref{lemma_stair_case_6_1}, we have that
			\begin{equation}\label{lemma_stair_case_6_3}
			\|y(kT_0;(1/N)\eta_0,0)\|_{D(A^s)}\leq (1/N)\|\eta_0\|_{D(A^s)}\leq \varepsilon.
			\end{equation}
			Then, we apply smooth controllability (given by ($H_1^{\prime}$)) to find some control $\hat{u}_k\in L^{\infty}(\mathbb{R}^+;V)$ so that the solution to \eqref{lemma_stair_case_6_2} with control $u_k=\hat{u}_k$ satisfies
			\begin{equation}\label{lemma_stair_case_6_4}
			y((k+1)T_0;(1/T_0)\eta_0,\chi_{(kT_0,(k+1)T_0)}\hat{u}_k)=0\quad\mbox{and}\quad\|\hat{u}_k\|_{L^{\infty}((kT_0,(k+1)T_0);V)}\leq \varepsilon.
			\end{equation}
			Now, we define:
			\begin{equation}\label{lemma_stair_case_6_5}
			v(t)=\sum_{k=0}^{N-1}\chi_{(kT_0,(k+1)T_0)}(t)u_k(t)\quad t\in\mathbb{R}^+.
			\end{equation}
			Then, from \eqref{lemma_stair_case_6_4} and \eqref{lemma_stair_case_6_5}, we know
			\begin{equation*}
			y(NT_0;\eta_0,v)=0\quad\mbox{and}\quad \|v\|_{L^{\infty}((0,NT_0);V)}\leq \varepsilon.
			\end{equation*}
			This leads to the conclusion where $\overline{T}=NT_0$.
			\qed\end{proof}

	\bibliography{my_references}
	\bibliographystyle{siam}
	
	
\end{document}